\newcommand{\eps}{\varepsilon}
\newcommand{\R}{\mathbb R}
\newcommand{\C}{\mathbb{C}}
\newcommand{\NN}{\mathbb{N}}
\newcommand{\beq}{\begin{equation}}
\newcommand{\eeq}{\end{equation}}
\newcommand{\ds}{\displaystyle}
\newcommand{\ts}{\textstyle}
\newcommand{\ol}[1]{\overline{#1}}
\newcommand{\grad}{\nabla}
\newcommand{\wto}{\rightharpoonup}
\def\geq{\geqslant}
\def\leq{\leqslant}
\def\div{{\rm div}\,}
\def\curl{{\rm curl}\,}
\def\Hdiv{\breve{H}^1_{\div}}
\newcommand{\nnn}{\nonumber}
\newcommand{\HH}{\breve{H}_{\div}^1}
\newcommand{\HHr}{\HH(\R^3;\R^3)}
\DeclareMathOperator*{\argmin}{arg\, min}
\begin{document}

\markboth{S. Alama \& L. Bronsard \& B. Galv\~ao-Sousa} {Thin film limits for Ginzburg--Landau}

\title{Thin film limits for Ginzburg--Landau with strong applied magnetic fields}

\author{STAN ALAMA \& LIA BRONSARD \& BERNARDO GALV\~AO-SOUSA}

\address{Department of Mathematics and Statistics\\ McMaster University\\
Hamilton, ON, Canada
\\
\emph{\texttt{\{alama,bronsard,beni\}@mcmaster.ca}}}

\maketitle

\begin{abstract}
{\bf Abstract.} 
In this work, we study thin-film limits of the full three-dimensional Ginzburg--Landau model for a superconductor in an applied magnetic field oriented obliquely to the film surface.  We obtain $\Gamma-$convergence results in several regimes, determined by the asymptotic ratio between the magnitude of the parallel applied magnetic field and the thickness of the film. Depending on the regime, we show that there may be a decrease in the density of Cooper pairs. We also show that in the case of variable thickness of the film, its geometry will affect the effective applied magnetic field, thus influencing the position of vortices.
\end{abstract}

\keywords{Ginzburg-Landau; thin-films; superconductivity.}

\ccode{Mathematics Subject Classification 2000: }

\section{Introduction}

In this paper we consider superconducting thin films subjected to an external magnetic field, using the Ginzburg--Landau model.  We assume the superconductor occupies a domain $\Omega_\eps\subset\R^3$ of variable but small thickness, which projects to a smooth planar domain $\omega\subset\R^2$,
$$ {\bf x}=({\bf x}',{\bf x}_3)\in \Omega_\eps \quad\iff \quad  {\bf x}'\subset\omega, \ 
      \eps f({\bf x}')<{\bf x}_3< \eps g({\bf x}'),  $$
for given smooth functions $f,g: \ \omega\to \R$ with $\inf_\omega (g-f)>0$.
Here, and throughout, we denote the projection of ${\bf x}\in\R^3$ to the plane by ${\bf x}'=({\bf x}_1,{\bf x}_2)\in\R^2$.  The state of the superconductor is described by
a complex-valued order parameter, ${\mathbf u}: \ \Omega_\eps\to \C$ defined inside the sample, and the magnetic vector potential ${\mathbf A}: \ \R^3\to \R^3$, which determines the magnetic field ${\mathbf h}={\mathbf\nabla}\times{\mathbf A}$.  We assume that the superconductor is placed in a constant magnitude, externally applied magnetic field ${\mathbf h}_\eps^{\rm ex}$, which may be oriented {\em obliquely} with respect to the plane of $\omega$.  With these choices, the Ginzburg--Landau energy functional is given by
\begin{equation*}
{\bf I}_{\kappa,\eps}({\bf u},{\bf A}) := \frac{1}{\eps}\biggl( \frac12 \int_{\Omega_{\eps}} \left( |\grad_A {\bf u} |^2 +  \frac{\kappa^2}{2} \bigl( 1 - |{\bf u}|^2 \bigr)^2 \right) \, d{\bf x} + \frac12 \int_{\R^3} |{\bf h} - {\bf h}^{\rm ex}_{\eps}|^2  \, d{\bf x} \biggr),
\end{equation*}
We note that the factor $1/\eps$ which multiplies the energy is not traditionally present, but is useful here since the energy of minimizers will be order-one with this normalization.

Motivated by recent work on the Lawrence--Doniach model (\cite{ABS1}, \cite{ABS3}) we are particularly interested in the behavior of the thin film superconductor in applied fields which are {\em parallel} (or nearly parallel) to the plane of $\omega$.  In order to see the effect of strong parallel fields, we allow the parallel component of the applied field 
${{\mathbf h}_\eps^{\rm ex}}' (\in\R^2)$ to depend on the thickness parameter $\eps$,
\begin{equation}\label{hex}
{\mathbf h}_\eps^{\rm ex} := (\rho_\eps {h^{\rm ex}}', h_3^{\rm ex}),
\end{equation}
We will identify different $\Gamma$--limits, in the sense of De Giorgi (see \cite{deGiorgi, GF, DalMaso, Braides}), depending on the magnitude of $\rho_\eps$.  The limiting behavior of minimizers of ${\mathbf I}_{\kappa,\eps}$ with applied fields of fixed magnitude ($\rho_\eps=1$) was studied by Chapman, Du \& Gunzburger \cite{CDG}.  By means of an asymptotic expansion using the Euler--Lagrange equations and estimates on the minimum energy they show that the vertical averages of the order parameters ${\mathbf u}_\eps$ and potentials ${\mathbf A}_\eps$ converge (weakly in $H^1$) to a solution of a simplified two-dimensional Ginzburg--Landau model, in which the limiting vector potential produces the vertical component ${\bf h}^{\rm ex}_3$ of the applied field.  Our results (below) reproduce this outcome as part of a more general $\Gamma$--convergence setting, in the appropriate (``subcritical'') regime.  The critical case, $\rho_\eps=O(\eps^{-1})$, and supercritical cases produce very different and interesting results, which we will describe below.

In preparing this manuscript we have learned of very recent work by Contreras \& Sternberg \cite{CS} on $\Gamma$-limits for thin film superconductors, but with a very different point of view.  They consider thin shells based on fixed closed manifolds in $\R^3$, with magnetic fields independent of $\eps$.

To identify the correct scales in the problem, we introduce the following rescaled coordinates:
\begin{align*}
& x = (x',x_3)= (x_1,x_2,x_3) = \bigg({\bf x}_1, {\bf x}_2, \frac{{\bf x}_3}{\eps}\bigg) \\
& A(x) = ({\bf A}_1, {\bf A}_2, \eps {\bf A}_3)\bigg({\bf x}_1, {\bf x}_2, \frac{{\bf x}_3}{\eps}\bigg), \\
& u(x) = {\bf u}({\bf x}).
\end{align*}
In the new coordinates, the magnetic field $h=\nabla\times A$ transforms in a straightforward way,
$$  {\mathbf h}={\mathbf \nabla}\times{\mathbf A} =      
     \left( {1\over\eps}(\partial_2 A_3-\partial_3 A_2), 
      {1\over\eps}(\partial_3 A_1-\partial_1 A_3),
        (\partial_1 A_2 -\partial_2 A_1)\right)=
              \left({1\over\eps} h', h_3\right),
$$
and similarly for ${\mathbf h}^{\rm ex}=  \left({1\over\eps} {h^{\rm ex}}', h^{\rm ex}_3\right).$  Note also that the divergence free condition
$\nabla\cdot h=0$ is preserved under this rescaling.

Denote the rescaled domain
$$  \Omega:=\Omega_1=\{ (x', x_3)\in\R^3: \  f(x')< x_3<g(x'), \quad
      x'\in\omega\}.  $$
Then, the Ginzburg--Landau energy becomes:
\begin{multline}  
I_{\kappa,\eps}(u,A) := 
	\frac12 \int_{\Omega} \left( | (\grad'-iA')u|^2 + \left| \frac{1}{\eps}(\partial_3 -iA_3)u \right|^2 + \frac{\kappa^2}{2} \bigl( 1 - |u|^2 \bigr)^2 \right) \, dx  \\
	+ \frac12 \int_{\R^3} \left(|h_3 - h^{\rm ex}_3|^2 + \frac{1}{\eps^2} \left| h' - \eps \rho_{\eps}{h^{\rm ex}}' \right|^2 \right)  \, dx.
	\label{GL}
\end{multline}
In keeping with our notation above, $\grad' = (\partial_1, \partial_2)$. 

We must also define function spaces for our configurations $(u,A)$.  This is complicated both by the fact that $A$ is defined in the whole space $\R^3$ and the gauge invariance of the energy.  The natural space for the order parameter is $u\in H^1(\Omega;\C)$.  To define a space for the vector potential $A$ we must essentially fix an appropriate gauge, which also captures the behavior of the field at infinity.  First, we fix a representative for the constant effective external field, $(\eps\rho_\eps h^{\rm ex}{}',h^{\rm ex}_3)$,
\begin{equation}\label{Aex}
 A^{\rm ex}_\eps= \frac12 (\eps\rho_\eps h^{\rm ex}{}',h^{\rm ex}_3)\times (x_1,x_2,x_3) =
      \frac12( \eps\rho_\eps h^{\rm ex}_2 x_3-h^{\rm ex}_3 x_2, h^{\rm ex}_3 x_1 - \eps\rho_\eps h^{\rm ex}_1 x_3, \eps\rho_\eps(h^{\rm ex}_1 x_2 - h^{\rm ex}_2 x_1)).
\end{equation}
Then, we assume $A-A^{\rm ex}_\eps\in \HH(\R^3;\R^3)$, defined as the completion of the space  of smooth, compactly supported, divergence free vector fields $C_0^\infty(\R^3;\R^3)$, in the Dirichlet norm, $\| F \|_{\HH}=[\int_{\R^n} |DF|^2\, dx]^{1/2}$.  (See Giorgi \& Phillips \cite{GP}.)  

\medskip

With the energy of the form \eqref{GL}, we may now identify the different limiting regimes as $\eps\to 0$.  We identify the {\em subcritical regime} with $\eps\rho_\eps \to 0$, the {\em critical regime} corresponds to $\eps\rho_\eps\to L\neq 0$, and $\eps\rho_\eps \to\infty$ in the {\em supercritical regime.}
We prove a $\Gamma$--convergence result for each regime:  Assume $\eps_n\to 0^+$ is any sequence, and $(u_n, A_n)$ with $u\in H^1(\Omega_1;\C)$ and
$A_n-A^{\rm ex}_{\eps_n}\in \HH(\R^3;\R^3)$ is a sequence with bounded energy $\sup_n I_{\kappa,\eps_n}(u_n,A_n)<\infty$.  

\subsection*{The critical regime}
By adjusting the constant values of $h^{\rm ex}{}'$, we may simplify our condition to $\eps \rho_\eps \to 1$,
and neglect the $\eps$ dependence of $A^{\rm ex}$.  This is the most interesting case, as it leads to two new phenomena in the limiting energy.  

First, we obtain a compactness result:  there exists $v\in H^1(\omega;\C)$
and $b\in L^2(\omega;\C)$ so that
\begin{gather*}   u_n\wto u= 
 v(x') \exp\left(i\int_0^{x_3} A^{\rm ex}_3(t)\, dt \right)
      \quad\text{in $H^1(\Omega_1;\C)$} \\
A_n- A^{\rm ex}\wto 0 \quad\text{in $\HH(\R^3;\R^3)$} \\
{1\over\eps_n d(x')}\int_{f(x')}^{g(x')} (\partial_3 -i A_{3n})u_n \, dx_3
\wto  b(x') \quad\text{in $L^2(\omega;\C)$}.
\end{gather*}
Here $d(x'):= g(x')-f(x')$, the rescaled thickness of the film.
We observe that the limit $u(x)$ is gauge-equivalent to a function $v(x')$ defined in the 2D domain $\omega$.

The functionals $I_{\kappa,\eps}$ $\Gamma$-converge to the two-dimensional Ginzburg--Landau functional,
$$ I_{\kappa,0}(v,b) = \frac12 \int_{\omega} d(x')\left( \bigl|\bigl(\grad' -i{B^{\rm ex}}' v \bigr|^2 + | b|^2 + \frac{d^2(x')}{12} \bigl|{h^{\rm ex}}'\bigr|^2 |v|^2
	+ \frac{\kappa^2}{2} \bigl( 1 - |v|^2 \bigr)^2  \right) \, dx',
$$
with fixed magnetic vector potential
\begin{equation}\label{Bprime}
 {B^{\rm ex}}':= {h_3^{\rm ex}\over 2}\left(-x_2, x_1\right) - \left({f+g\over 2}\right)\left(-h_2^{\rm ex}, h_1^{\rm ex}\right).  
 \end{equation}
   
%
The quantity $b$ measures the deviation of the gauge-invariant derivative of $u_n$ in the vertical direction, and plays the role of the ``Cosserat vectors'' in limits of elastic membranes (see \cite{BFM},\cite{FFL},\cite{GSM}.)

\medskip

We note two features of the limiting energy.  First, we may recomplete the square in the potential term,
\begin{equation}\label{modpot}   
\frac{d^2(x')}{12} \bigl|{h^{\rm ex}}'\bigr|^2 |v|^2
	+ \frac{\kappa^2}{2} \bigl( 1 - |v|^2 \bigr)^2  
		 = \frac{\kappa^2}{2} \left(
	      \left[1- {d^2(x')|{h^{\rm ex}}'|^2\over 12\kappa^2}\right] - |v|^2
	      \right)^2 +\left[ 
	         \left(1-{d^2(x')|{h^{\rm ex}}'|^2\over 12\kappa^2}\right)^2-1
	         \right].
\end{equation}
Thus, the presence of a strong (order $\rho_\eps\sim\eps^{-1}$) parallel applied field reduces the density of superconducting electrons in the sample, even in the absence of a perpendicular applied field component.  Assume for simplicity that the sample has uniform thickness,
$d(x')=1$.  Then, a simple application of the maximum principle shows that any solution of the Euler--Lagrange equations corresponding to the energy $I_{\kappa,0}$ must satisfy
$$
|v| \leq \argmin_{\rho >0} \frac{1}{12} |{h^{\rm ex}}'|^2 \rho^2 + \frac{\kappa^2}{2} \bigl(1-\rho^2\bigl)^2
	= \argmin_{\rho >0} \left[ \rho^2 - \left(1-\frac{|{h^{\rm ex}}'|^2}{12\kappa^2}\right)  \right]^2
	= \begin{cases}
		0	& \text{ if } |{h^{\rm ex}}'|^2 \geq 12 \kappa^2\\
		\sqrt{1-\frac{|{h^{\rm ex}}'|^2}{12\kappa^2}} & \text{ if } |{h^{\rm ex}}'|^2 < 12 \kappa^2
	\end{cases}
$$
In particular, we conclude that the normal state $v\equiv 0$ is the only solution to the Euler--Lagrange equations for $I_{\kappa,0}$ with
${h^{\rm ex}}'\geq\sqrt{12}\kappa$, that is 
${{\mathbf h}^{\rm ex}}' \gtrsim{\sqrt{12}\kappa\over\eps}$ in the original coordinates.

The second curious consequence in the critical case is the effect of the potential
${B^{\rm ex}}'$.  For films which are appropriately bent (so that $\nabla' (f+g)\neq 0$), the deflection of the film's vertical center essentially converts the horizontal component of the applied field to the vertical, creating a spatially dependent effective field.
Thus, even in the absence of a perpendicular applied field component ($h_3^{\rm ex}=0$) we may observe vortices in the thin film limit, which are approximately vertical, since $v=v(x')$.
For very special domain shapes and applied field strengths, we may even observe vortex concentration on curves in the limit $\kappa\to\infty$, as has been studied by Alama, Bronsard, \& Millot \cite{ABMi}.  We present some illustrative examples in section~\ref{examplesec}.  The proof of the compactness and  $\Gamma$--convergence results will be presented in section~\ref{critsec}.

We note that a similar phenomenon, whereby inhomogeneities in a thin domain lead to a curious dependence on the direction of an applied field, has been observed by Richardson and Rubinstein \cite{RubRich} and proved by Shieh \cite{Shieh} in the context of thin three-dimensional domains which shrink as $\eps\to 0$ to closed space curves.  Shieh also considers $\Gamma$-limits with applied fields on the order of $\eps^{-1}$.
The limiting functional is supported on a closed loop, and it contains a new potential term determined by all three components of the applied field and the geometry of the underlying curve.

\subsection*{The subcritical regime}

The subcritical regime, $\eps\rho_\eps\to 0$, subdivides in two cases.
When $\rho_{\eps} \to \rho < \infty$, we obtain $\Gamma$--convergence results along the lines of the model derived in \cite{CDG}. 
In this case, the magnetic field converges (weakly) to $(0,0,h_3^{\rm ex})$, and through a ``Cosserat vector'' $c =(c_1,c_2)$, we recover the deviation of the parallel magnetic field, $h \approx (\eps c_1, \eps c_2, h^{\rm ex}_3)$. We note that these vectors depend on all three spatial variables, they retain some of the effect of the actual thickness of the film on the deviation of the magnetic field from the vertical, inside and nearby the sample.
The resulting $\Gamma$--limit is the two dimensional Ginzburg--Landau functional
$$ I_{\kappa,-}^{\rho}(u,b,c) = \frac12 \int_{\omega} d(x')\left( \bigl|\bigl(\grad' -i{A^{\rm ex}_{\perp}}') u \bigr|^2 + | b|^2 	+ \frac{\kappa^2}{2} \bigl( 1 - |u|^2 \bigr)^2  \right) \, dx' + \frac12 \int_{\R^3} \bigl|(c_1,c_2) - \rho {h^{\rm ex}}'\bigr|^2 \, dx,
$$
with fixed magnetic potential $A^{\rm ex}_{\perp}=h_3^{\rm ex}(-{x_2\over 2},{x_1\over 2},0)$.

In the case when $\rho_\eps\to \infty$, the magnetic field also converges (weakly) to $(0,0,h^{\rm ex}_3)$, but its parallel deviation is of higher order: $h \approx (\eps\rho_{\eps} c_1, \eps\rho_{\eps} c_2, h^{\rm ex}_3)$, but it doesn't contribute to the energy. In this case, the functionals $I_{\kappa,\eps}$ $\Gamma$--converge to the Ginzburg--Landau functional
$$
I_{\kappa,-}^{\infty}(u,b) = \frac12 \int_{\omega} d(x')\left( \bigl|\bigl(\grad' -i{A^{\rm ex}_{\perp}}') u \bigr|^2 + | b|^2 	+ \frac{\kappa^2}{2} \bigl( 1 - |u|^2 \bigr)^2  \right) \, dx'.
$$
Notice that when the external magnetic field is only applied parallel to the limiting plane ($h_3^{ex}=0$) we recover the simple functional of Bethuel, Brezis, \& H\'elein \cite{BBH}, but with natural (Neumann) boundary conditions.  A precise statement of the compactness and convergence results is in section~\ref{subsec}.  

The case $\rho_{\eps} \to \rho < \infty$ leads to an interesting auxilliary question about divergence-free vector fields:  given the first two components 
$v'=(v_1,v_2)\in L^2(\R^3;\R^2)$ of a vector field on $\R^3$, can it always be completed as a divergence-free vector field $v\in L^2(\R^3;\R^3)$?  It turns out that the answer is no, and we provide an example of a smooth compactly supported $v'$ which may not be completed to a divergence-free $L^2$ vector field.  Fortunately, to construct our upper bounds in the subcritical regime we do not require such a strong result:  it suffices that $v'$ be obtained as a weak limit of divergence-free $L^2$ vector fields, while allowing some unboundedness in the third component.  In section~\ref{SS4.3} we show that any $v'=(v_1,v_2)\in L^2(\R^3;\R^2)$ may be obtained in this way.

\subsection*{The supercritical regime}
In the supercritical regime, $\eps\rho_\eps\to\infty$, the $\Gamma$--limit is trivial:
$$ \Gamma\text{-}\lim_{\eps\to 0} I_{\kappa,\eps}(u_\eps,A_\eps)= \begin{cases}
	\ds \frac{\kappa^2}{4} |\Omega|	& \text{ if } u\equiv 0 \text{ and } h = {h^{\rm ex}}'\\
	\infty		& \text{ otherwise.}
	\end{cases}
$$
This is consistent with the critical case, as taking $\eps\rho_\eps\to L\gg 1$ is
equivalent to multiplying  ${h^{\rm ex}}'$ by a factor $L$ in the previous paragraph.  As described above, when the parallel component of the field is too strong (compared with $\eps^{-1}$) only the normal state is admissible.
A complete analysis of this case will be done in section~\ref{supersec}.

\section{Minimizers of the limit energies}
\label{examplesec}

Before providing the details of the $\Gamma$-convergence results, we discuss some interesting, and in some cases, surprising, consequences for global minimizers of the thin-film limits of Ginzburg--Landau.  The two-dimensional Ginzburg--Landau model has been extensively studied, in particular in the so-called ``London limit'' $\kappa\to \infty$, and here we present some relevant examples and indicate where the pertinent results may be found in the literature.

First we observe that in this section the domains and functions are two-dimensional, and so we use the usual notation $\grad = (\partial_1,\partial_2)$, $x=(x_1,x_2)$. The only exception is the applied magnetic field $h^{\rm ex}$ which is three-dimensional, but the energies yield effective magnetic fields that are vertical, although they may depend on the parallel part of $h^{\rm ex}$.

\medskip

Energy minimizers will (in the $\Gamma$--limit) minimize a two-dimensional functional of the type
\begin{equation}\label{reduced}
G_{\kappa,\lambda}(v; A_0)= \int_\omega d(x)\left\{  
 \frac12 |(\nabla -i\lambda A_0)v|^2 + {\kappa^2\over 4} (|v|^2 - \gamma_\kappa^2)^2\right\} dx.
\end{equation}
In the subcritical case, we may take $\lambda=h_3^{\rm ex}$ and 
$A_0=\frac12 (-x_2,x_1)$.   For the critical case
there are three free parameters, so to reduce their number we fix the direction of the vector field $h^{\rm ex}$ as follows,
\begin{equation}\label{h3d}
    h^{\rm ex}=(h_1^{\rm ex}, h_2^{\rm ex}, h_3^{\rm ex}) 
      = \lambda\left(\alpha_1,\alpha_2,\alpha_3\right),  
\end{equation}
for a constant unit vector $\alpha=(\alpha_1,\alpha_2,\alpha_3)$, $|\alpha|=1$.
In the critical case we thus write 
\begin{equation}\label{critA0}
 A_0= \lambda^{-1}B= (\alpha_2,-\alpha_1)\left[{f+g\over 2}\right]
         + {\alpha_3\over 2}(-x_2,x_1).  
\end{equation}
We note that the only true unknown is $v\in H^1(\Omega;\mathbb{C})$.  The vector potential $A_0$ is given, and write $G_{\kappa,\lambda}(v; A_0)$ to emphasize the dependence of the functional on $A_0$.

The constant $\gamma_\kappa=1$ in the subcritical cases, and is given by
$$  \gamma_\kappa^2= 1- {d^2(x)|h^{\rm ex}{}'|^2\over 12 \kappa^2 }$$
in the critical case.  We will assume that the magnitude of $|h^{\rm ex}{}'|^2\ll \kappa^2$ (and $\kappa\gg 1$) in the following discussion, and so we may effectively think of $\gamma_\kappa=1$ in all cases.

\medskip

We specialize to the case of applied fields on the order of the lower critical field, the value at which vortices first appear in the minimizing configurations.  As is well-known (see \cite{SS},) this occurs at magnetic field strength of order $\lambda\sim\ln\kappa$.  
In this section, we briefly indicate the characteristics of minimizers with vortices in the London limit $\kappa\to\infty$ for general cases and for some interesting examples.  We do not provide proofs, but refer the reader to previous work which applies with few modifications.

\medskip

Assume first that $\omega$ is simply connected;  multiply connected domains require different treatment (see \cite{AB1,AB2}.)  First, we note that this problem exhibits gauge invariance:  for any (smooth) scalar function $\eta$,
there holds
$$   G_{\kappa,\lambda}(v; A_0) = G_{\kappa,\lambda} (v e^{i\eta}; A_0 +\nabla\eta).
$$
In particular, the behavior of minimizers of $G_{\kappa,\lambda}$ will be the same for any vector field $\tilde A_0=A_0+\nabla \eta$ with the same magnetic field $h_0:=\curl A_0$.  It is convenient to exploit the freedom to choose a particular vector potential $A_0$ by {\em fixing a gauge}.  We assume that $A_0$ is chosen such that:
$$  \div (d(x)A_0)=0\quad\text{in $\omega$}, \quad A_0\cdot\nu|_{\partial\omega}=0.  $$
This is always possible, as is proven in \cite{DD1}:  one replaces $A_0$ by $A_0+\nabla'\eta$, and obtains a Neumann problem for $\eta$.  
By this gauge choice, it is possible to find $\xi_0\in H^1_0(\omega)$ with 
$$\nabla^\perp\xi_0=d(x)A_0,  $$
 where $\grad^{\perp} = (-\partial_2, \partial_1)$.  Indeed, $\xi_0$ will solve the Dirichlet problem,
\begin{equation}\label{xi}
\div\left( {1\over d(x)} \nabla \xi_0\right)=\grad^{\perp} \cdot A_0 \ (=h_0), \qquad \xi_0\in H^1_0(\omega).
\end{equation}

It is this auxilliary function $\xi_0$ which will determine the location of the first vortices.  To give an idea of what happens near the first critical field, we present here a formal argument based on a rough evaluation of the energy of vortex configurations.  Assume $v=v_\kappa$ is a minimizer of $G_{\kappa,\lambda}(v; A_0)$ with a finite collection of vortices at points $a_1,\dots,a_m\in\omega$, with degrees $n_1,\dots, n_m$, and the field strength $\lambda\ll\kappa^2$.  For simplicity, take $\gamma_\kappa=1$. We expect that each vortex entails an energy cost, concentrated in a small disk $B=B_{r_i}(a_i)$ centered at the vortex, of the order
$$  \frac12\int_{B_{r_i}(a_i)} d(x)\left\{ |\nabla v|^2 + {\kappa^2\over 2}
(1-|v|^2)^2\right\} \gtrsim \pi |n_i| d(a_i) \ln\kappa.  $$
This energy cost is made precise by the vortex-ball construction in Chapter 4 of \cite{SS}.  The vortices also represent singularities in the Jacobian associated to the map $v$; indeed, for $\kappa$ large,
\begin{equation}\label{jac}
  Jv_\kappa=\det Dv_\kappa=\frac12 \nabla\times (iv_\kappa, \nabla v_\kappa)
   \simeq  \pi \sum_{j=1}^m  n_j \delta_{a_j}.  
\end{equation}
This may be made explicit using the work of Jerrard \& Soner \cite{JS}, and the above approximation holds in the norm on the dual space to $C_0^{0,1}(\overline{\omega})$.  
To see why vortices are produced, at which field strength $\lambda$, and at which points in $\omega$, we expand the energy of minimizers $v_\kappa$:
\begin{align}\nnn
G_{\kappa,\lambda}(v_\kappa; A_0) &= \frac12\int_\omega 
   d(x) \left\{
         |\nabla v_\kappa|^2 
          -2 \lambda A_0\cdot (i v_\kappa,\nabla v_\kappa)
            + \lambda^2 |A_0|^2 |v_\kappa|^2 
              + {\kappa^2\over 2}(|v_\kappa|^2-1)^2\right\} \\
              \nnn
             &\gtrsim    
             \pi\sum_{i=1}^m |n_i| d(a_i) \ln\kappa  - 
               \lambda \int_\omega \nabla^\perp\xi_0 
                   \cdot (iv_\kappa,\nabla v_\kappa) 
                     + {\lambda^2\over 2}\int_\omega d(x) |A_0|^2 \\
   \label{LB1}
             &\simeq
               \pi\sum_{i=1}^m |n_i| d(a_i) \ln\kappa
                + 2\pi\lambda \sum_{i=1}^m n_i \xi_0(a_i) 
                 + {\lambda^2\over 2}\int_\omega d(x) |A_0|^2,
\end{align}
where we have integrated by parts and used \eqref{jac} in the last line.
A simple upper bound on the energy of minimizers is obtained using $v\equiv 1$ as a test function,
$$   G_{\kappa,\lambda}(v_\kappa; A_0)\le  
    G_{\kappa,\lambda}(1;A_0) =
      {\lambda^2\over 2}\int_\omega d(x) |A_0|^2.
$$
  In order to have vortices, the cost of each vortex (estimated by the first term on the right-hand side in \eqref{LB1}) should be balanced by the second term,
  that is,
$$   \sum_{i=1}^m \left\{ |n_i| d(a_i) \ln\kappa + 2\lambda n_i \xi_0(a_i)
                           \right\} \lesssim 0.  $$
For the second term to be as large (negative) as possible, we should place vortices at or near the point set 
$$  \Lambda:=\left\{p\in\omega:  \  \left| {\xi_0(p)\over d(p)}\right|
       = \max_\omega \left| {\xi_0\over d}\right|\right\}.  $$
at which the maximum of $|\xi_0/d|$ is attained.  If the value of $\xi_0/d$ is positive there, the degree $n_i<0$, while if the value of $\xi_0/d$ is negative, the vortex should have degree $n_i>0$.  The critical value of $\lambda$ at which the two terms are exactly balanced gives the lower critical field, which is given by
$$   H_{c1}= {1\over 2\max_\omega |\xi_0(x)/d(x)|} \ln\kappa + O(1).  $$
That is, for $\lambda=\lambda(\kappa) < H_{c1}$, there should be no vortices, since they cost more energy than they save, while for larger $\lambda$ energy minimization favors the creation of vortices near the set $\Lambda$.
These computations are formal, but may be made precise using the methods of \cite{SS}.

\medskip

\noindent {\bf The Subcritical Case.} \ 
In the subcritical case, $A_0={A^{\rm ex}_{\perp}}'$, corresponds to the constant vertical field $h_3^{\rm ex}$, and $\gamma_\kappa=1$.
Numerical simulations of this model have been undertaken in \cite{CDG, LDu}, and in the case of simply-connected domains $\omega$, a study of global minimizers with vortices has been undertaken by Ding \& Du \cite{DD1, DD2}, in the limit $\kappa\to\infty$.  In this setting, $\grad^{\perp}\cdot A_0\equiv 1$, so
by the maximum principle, $\xi_0<0$ in $\omega$.  Assuming $d(x)$ is real-analytic, $\xi_0/d$ attains its global minimum at a finite number of points interior to $\omega$.  In this case, the result of \cite{DD2} applied directly, and for applied fields sufficiently close to $H_{c1}$,
$\lambda=h_3^{\rm ex}= H_{c1} + K\ln \ln\kappa$,
a finite number of vortices (the number uniformly bounded in $\kappa$) of positive degree will concentrate as $\kappa\to \infty$ near the set of minimizers of $\xi_0/d$.  This outcome is qualitatively identical to the corresponding result for the usual two-dimensional Ginzburg--Landau model, and so the thin film geometry does not play a special role in the subcritical case for applied fields close to the critical field $H_{c1}$.

\medskip

We note that the hypothesis that $\omega$ be simply-connected is implicit in the arguments of \cite{DD1,DD2}, which no longer hold for multiply-connected domains.  As was observed in \cite{AAB}, in a multiply-connected domain the holes act as ``giant vortices'' at {\em bounded} applied field strength $h^{\rm ex}_3$.  To analyze the creation of vortices in the interior of $\omega$ the effect of the holes must be taken into account, modifying the choice of auxilliary function which determines the critical field and the vortex locations.
This analysis was done for a circular annulus (in the context of Bose-Einstein condensates) in \cite{AAB}, and extended to more general multiply-connected domains and the full Ginzburg--Landau functionals (with or without inhomogeneities) in \cite{AB2, AB1}.  In these papers it has been observed that  vortices may concentrate on {\em curves} in multiply-connected $\omega$ as $\kappa\to\infty$.  The asymptotic distribution of vortices along the limiting curve is studied in \cite{ABMi}.

\medskip

\noindent
{\bf The Critical Case.} \ 
In the critical regime more interesting phenomena may be observed.
As mentioned above, $\gamma_\kappa\sim 1$, and so the reduction of $|v|$ by the modification of the potential \eqref{modpot} is negligible for applied fields $h^{\rm ex}=O(\ln\kappa)$.  However, the effective vector potential 
(see \eqref{critA0}) yields some new, unexpected results for the London limit
$\kappa\to\infty$.
Indeed, the equation for $\xi_0$ now reads as:
\begin{equation}\label{xi1}
   -\div\left({1\over d(x)}\nabla \xi_0\right) 
     = -\nabla^{\perp}\cdot A_0=\alpha\cdot\left( {\partial\over\partial x_1}\left[{f+g\over 2}\right]
     , {\partial\over\partial x_2}\left[{f+g\over 2}\right], -1\right)  \quad
\xi_0\in H^1_0(\omega).  
\end{equation}
Note that the effective magnetic field $\nabla^{\perp}\cdot A_0$ coincides with the projection of the field direction $\alpha$ onto the familiar area-weighted normal vector to the centroid surface
$x_3=\frac12(f(x)+g(x))$.  In particular, we observe that if the film's centroid
is not planar, then the function $\xi_0$ is modified, and thus the lower critical field and location of vortices will differ from 
the subcritical case, due to the presence of the parallel field components ${h^{\rm ex}}'$.

Since the right-hand side of \eqref{xi1} may not be sign definite, we cannot conclude from the Maximum principle that $\xi_0$ is sign definite, leading to the possibility that the maximum of $|\xi_0/d|$ could occur at a positive or negative value of $\xi_0$.  Denote by
$$  \Lambda:=\left\{p\in\omega:  \  \left| {\xi_0(p)\over d(p)}\right|
       = \max_\omega \left| {\xi_0\over d}\right|\right\}.  $$
In case the maxima of $|\xi_0/d|$ occur at finitely many points in $\omega$, an analysis similar to that of \cite{SS, DD2} applies, and we may prove:  

\begin{theorem}\label{pointthm} 
Assume $\Lambda$ consists of finitely many points, and there exist constants $C,M>0$ for which
\begin{equation}\label{SScond}
\left| {\xi_0(x)\over d(x)}\right| \leq \max_\omega \left| {\xi_0\over d}\right|
   - C[\text{dist}\,(x,\Lambda)]^{M}, 
\end{equation}
for $x$ in some neighborhood of $\Lambda$.
  Let $\alpha\in\R^3$ be a constant unit vector and 
$$  h^{\rm ex}=\alpha\,\lambda(\kappa)= \alpha\left[ 
{1\over 2\max_{\omega}|\xi_0/d|}\ln\kappa + K\ln\ln\kappa\right],  $$
with fixed constant $K$.  For any sequence $\kappa_n\to\infty$,
let $v_n$ be the minimizer of the energy $I_{\kappa_n,0}$, with $A_0$ as in \eqref{critA0}.  Then:
\begin{enumerate}
\item  there exists $K_*\in\R$ so that if $K<K_*$, $v_n$ has no vortices for all large $n$.
\item  for any $K\geq K_*$, $v_n$ has finitely many vortices, and the sum of the absolute values of their degrees is uniformly bounded in terms of $K$.
\item  the vortices concentrate at points in $\Lambda$, in the sense that their distance to $\Lambda$ is bounded by $(\ln\kappa)^{-\beta}$ for  constant $\beta>0$.
\item  if $p\in\Lambda$ and $\xi_0(p)<0$, the vortices concentrating at $p$ have positive degrees.  If $\xi_0(p)>0$, the degrees are negative. 
\end{enumerate}
\end{theorem}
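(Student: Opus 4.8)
The plan is to adapt the vortex-ball/Jacobian machinery of \cite{SS} (Chapters~4--12) and the concentration arguments of Ding--Du \cite{DD2} to the present functional, which differs from the standard two-dimensional Ginzburg--Landau energy only through the inhomogeneous weight $d(x)$ and the choice of gauge-fixed potential $A_0$ from \eqref{critA0}. Since $\gamma_\kappa\sim 1$ and $|h^{\rm ex}{}'|^2\ll\kappa^2$, the recompleted potential term \eqref{modpot} contributes only a lower-order perturbation, so the analysis proceeds as if $\gamma_\kappa=1$; one carries the $O(|h^{\rm ex}{}'|^2/\kappa^2)$ correction as an error term throughout. The core object is the auxiliary function $\xi_0\in H^1_0(\omega)$ solving \eqref{xi1}; because we have fixed the gauge so that $\nabla^\perp\xi_0=d(x)A_0$, the quadratic energy splits cleanly and the vortex interaction is governed by $\xi_0/d$ exactly as in the formal expansion \eqref{LB1}.

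First I would establish the sharp lower bound: using the vortex-ball construction adapted to the weight $d(x)$ (which is bounded above and below on $\overline\omega$, so the weighted and unweighted ball constructions are comparable up to $\sup d/\inf d$), together with the Jacobian estimate \eqref{jac} in the $(C^{0,1}_0(\overline\omega))^*$ norm, one obtains
$$
I_{\kappa,0}(v_\kappa)\ge \pi\sum_i |n_i|\,d(a_i)\ln\kappa + 2\pi\lambda\sum_i n_i\,\xi_0(a_i) + \frac{\lambda^2}{2}\int_\omega d(x)|A_0|^2 - C\sum_i|n_i|\,O(1),
$$
valid once the vortices are separated (a preliminary merging/selection step combines nearby vortices and discards those of zero net degree, using that $\lambda=O(\ln\kappa)$). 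Matching this against the test-function upper bound $I_{\kappa,0}(v_\kappa)\le I_{\kappa,0}(e^{i\psi})=\frac{\lambda^2}{2}\int_\omega d|A_0|^2 + o(\ln\kappa)$ — where $e^{i\psi}$ is a gauge phase chosen to cancel the linear term, available since $\div(dA_0)=0$ — forces
$$
\sum_i\Big( |n_i|\,d(a_i)\ln\kappa + 2\lambda\,n_i\,\xi_0(a_i)\Big)\le C.
$$
With $\lambda = \big(2\max_\omega|\xi_0/d|\big)^{-1}\ln\kappa + K\ln\ln\kappa$, each bracketed term is $\ge |n_i|d(a_i)\ln\kappa\big(1 - \tfrac{|\xi_0(a_i)/d(a_i)|}{\max|\xi_0/d|}\big) - C|n_i|K\ln\ln\kappa$, which is nonnegative unless $a_i$ lies within $o(1)$ of $\Lambda$; quantifying this with the Łojasiewicz-type hypothesis \eqref{SScond} gives $\mathrm{dist}(a_i,\Lambda)\le(\ln\kappa)^{-\beta}$ with $\beta = \beta(M)>0$, proving (3). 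A counting argument on the same inequality bounds $\sum_i|n_i|$ in terms of $K$, giving (2), while the sign of $\xi_0$ at the limiting point dictates that $n_i\xi_0(a_i)<0$ is needed for the energy saving, yielding (4). For (1), a refined lower bound — expanding $\xi_0(a_i)$ to second order near $\Lambda$ and retaining the $O(1)$ renormalized vortex self-energy and the vortex-vortex interaction, exactly as in \cite[Ch.~12]{SS} — produces a threshold $K_*$ below which introducing any vortex strictly increases the energy above $I_{\kappa,0}(e^{i\psi})$, so the minimizer is vortexless.

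The main obstacle I anticipate is not the energy expansion itself but making the Jacobian/vortex-ball estimates genuinely robust to the weight $d(x)$ and, more delicately, to the fact that $A_0$ here is the gauge-fixed potential of a \emph{spatially varying} effective field $h_0=\nabla^\perp\cdot A_0$ given by the right-hand side of \eqref{xi1} — in \cite{DD2} the effective field is constant, and several estimates there exploit that. One must verify that the key identity $-\mathrm{div}(d^{-1}\nabla\xi_0)=h_0$ with $\xi_0\in H^1_0$ continues to hold with $h_0\in L^\infty$ (here even smooth, since $f,g$ are smooth), that $\xi_0\in C^{1,\alpha}(\overline\omega)$ so that \eqref{SScond} is a meaningful condition, and that the elliptic regularity constants enter only the $O(1)$ and $o(\ln\kappa)$ error terms. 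Once this is in place, the rest follows the template of \cite{SS,DD2} with only notational changes; accordingly, as stated in the text, we omit the details and refer to those works.
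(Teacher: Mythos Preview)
Your proposal is correct and follows the same approach the paper indicates: the paper does not actually prove this theorem but states only that ``the proof of this result follows that of \cite{SS2}, except it is necessary to treat points of $\Lambda$ in two groups, those with positive and negative values of $\xi_0$.'' Your sketch via the vortex-ball/Jacobian machinery of \cite{SS} and the concentration arguments of \cite{DD2}, with the weight $d(x)$ carried through and the sign of $\xi_0$ at each concentration point determining the sign of the degrees, is exactly this template; you have in fact supplied more detail than the paper itself.
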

The proof of this result follows that of \cite{SS2}, except it is necessary to treat points of $\Lambda$ in two groups, those with positive and negative values of $\xi_0$.  We note that hypothesis \eqref{SScond} holds when $d(x),f(x),g(x)$ are real-analytic.

We note that in this context,  it is possible (and natural) that the maximum of $|\xi_0/d|$ is attained at both positive and negative values of $\xi_0$, in which case minimizers would exhibit {\em both} vortices and antivortices.  This will be the case if we choose $\omega=D_1(0)$, the unit disk, with $f(x)= \frac12 |x|^2$, $g(x)=f(x)+1$ (and thus $d(x)=1$.)
Then, taking a horizontal field, $\alpha=(1,0,0)$, we may solve the equation for $\xi_0$ exactly, $\xi_0(x)=\frac18 x_1 (1-|x|^2)$.  The
maximum absolute value is attained at $x=(\pm {1\over\sqrt{3}}, 0)$,
giving positive degree vortices concentrating at $(-{1\over\sqrt{3}}, 0)$ and
negative degree (anti-)vortices at $({1\over\sqrt{3}}, 0)$.
Since the thin film limit leads to $v=v(x_1,x_2)$, the vortices are essentially veritical, and thus the infinitesimal curvature of the film thus engenders {\em vertical} vortex lines in response to a purely {\em horizontal} applied field!

\medskip

Furthermore, it is also possible to find settings in which the maximum of $|\xi_0/d|$ is attained on a curve inside $\omega$, either a closed curve or a collection of compactly contained arcs.  For instance, if we again consider the case of a disk $\omega=D_1(0)$, but now choose a different thickness profile $f(x) =\frac{|x| x_2}{2} + \frac{x_1^2}{2}\ln\bigl(\frac{|x|+x_2}{|x_1|}\bigr)$, $g(x)=f(x)+1$ (so again $d(x)=1$), with applied field generated by $\alpha=(1,0,0)$, we may again solve for $\xi_0$ explicitly, obtaining:
$$\xi_0(x)= {1\over 8}r(1-r^2), \quad r = |x|.$$
The maximum value is obtained on the circle $r=|x|=1/\sqrt{3}$.  In this setting, we may apply the following $\Gamma$--convergence theorem of Alama, Bronsard, \& Millot \cite{ABMi}: 
suppose  $d(x)\equiv 1$, and define
$$   J_\kappa (v):= G_{\kappa,\lambda}(v) - \frac12\int_\omega |\lambda A_0|^2.  $$
\begin{theorem}
Assume $d(x)\equiv 1$, $\Lambda$ is a $C^2$ Jordan curve or embedded arc in $\omega$,
$\xi_0<0$ in $\omega$, attaining its minimum on $\Lambda$, and
\eqref{SScond} holds.
Assume 
$$  h^{\rm ex}=\alpha\,\lambda(\kappa)= \alpha\left[ 
{1\over\max_{\omega}|\xi_0|}\ln\kappa + \beta(\kappa)\right],  $$
with $1\ll \beta(\kappa)\ll \ln\kappa$.
  Let $\kappa_n\to\infty$.  Then:
\begin{enumerate}
\item  for any $v_n$ with $\sup_n {J_{\kappa_n}(v_n)\over \beta^2(\kappa_n)}<\infty$, there is a subsequence and a nonnegative Radon measure $\mu\in H^{-1}(\omega)$ supported on $\Lambda$ so that
$$   {1\over\beta(\kappa_n)} \curl (iv_n, \nabla' v_n) \to \mu \qquad
\text{strongly in $(C_0^{0,1}(\omega))^*$.}
$$
\item  The family ${1\over\beta^2(\kappa)}J_{\kappa}$ of functionals $\Gamma$-converges to $J_\infty(\mu)= I(\mu) - \|\xi_0\|_\infty \mu(\omega),$ where
$$  I(\mu) = \frac12 \iint_{\omega\times\omega}  G(x,y)\, d\mu(x)
d\mu(y) ,  $$
and $G$ is the Dirichlet Green's function of the domain $\omega$.
\item
If $v_n$ is a sequence of global minimizers of $J_{\kappa_n}$, then
$$   {1\over\beta(\kappa_n)} \curl (iv_n, \nabla' v_n) 
\to  {\|\xi_0\|_\infty\over 2I_*}\mu_* ,
$$
where $\mu_*$ is the unique probability measure which minimizes $I$, and 
$I_*=I(\mu_*)$.
\end{enumerate}
\end{theorem}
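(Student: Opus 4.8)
\medskip
\noindent\emph{Sketch of proof.}
The plan is to run the standard vortex-analysis program for mean-field limits of Ginzburg--Landau --- the vortex-ball construction of Sandier and Jerrard, the Jacobian estimates of Jerrard \& Soner \cite{JS}, and the energy splitting of Sandier \& Serfaty \cite{SS} --- adapted to the fact that the confining potential $\xi_0$ attains its extreme value along the curve $\Lambda$ rather than at isolated points. First fix the gauge as in \eqref{xi}, so that $A_0=\nabla^\perp\xi_0$ with $\xi_0\in H^1_0(\omega)$, and integrate by parts to write, with $\mu_\kappa:=\curl(iv_\kappa,\nabla' v_\kappa)$ the vorticity,
$$
J_\kappa(v)=\int_\omega\Bigl\{\tfrac12|\nabla v|^2+\tfrac{\kappa^2}{4}(1-|v|^2)^2\Bigr\}dx
 +\lambda\int_\omega \xi_0\,\mu_\kappa\,dx
 +\tfrac{\lambda^2}{2}\int_\omega |A_0|^2(|v|^2-1)\,dx ,
$$
the last term being negligible once $\int_\omega(1-|v|^2)^2$ is controlled by the energy. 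The whole scheme rests on the choice of $\lambda(\kappa)=\|\xi_0\|_\infty^{-1}\ln\kappa+\beta(\kappa)$ being tuned to the first critical field: the $\ln\kappa$-order contribution of the vortex-core energy ($\sim\pi\ln\kappa$ per unit of degree) and that of the magnetic term (coming from $\xi_0\simeq-\|\xi_0\|_\infty$ at a vortex sitting on $\Lambda$) cancel, so that the first nontrivial term is of order $\beta^2$; this forces the number of vortices to be $O(\beta)$, makes $\beta^2$ the natural energy scale, and dictates the precise form of both the compactness hypothesis and the limit functional.

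For part~(1), given $J_{\kappa_n}(v_n)\le C\beta_n^2$, I would apply the vortex-ball construction (Chapter~4 of \cite{SS}) to obtain disjoint balls $B_i$ with radii summing to $o(1)$, centres $a_i$, integer degrees $d_i$, the lower bound $\int_{\cup B_i}\{\tfrac12|\nabla v_n|^2+\tfrac{\kappa_n^2}{4}(1-|v_n|^2)^2\}\ge\pi\bigl(\sum_i|d_i|\bigr)(\ln\kappa_n-C)+R_n$, with $R_n$ the renormalized interaction energy of the vorticity, together with the Jerrard--Soner estimate $\|\mu_{\kappa_n}-2\pi\sum_i d_i\delta_{a_i}\|_{(C_0^{0,1})^*}\to0$. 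Inserting these into $J_{\kappa_n}\le C\beta_n^2$, using the cancellation at the critical field and using \eqref{SScond} (which makes a vortex at distance $r$ from $\Lambda$ cost an extra amount $\gtrsim\lambda_n|d_i|r^{M}$), one deduces that for large $n$ all $d_i\ge0$, that $\sum_i d_i=O(\beta_n)$, and that the vorticity mass concentrates near $\Lambda$. Hence any weak-$*$ subsequential limit $\mu$ of $\beta_n^{-1}\mu_{\kappa_n}$ is a nonnegative measure supported on $\Lambda$; that $\mu\in H^{-1}(\omega)$, equivalently $I(\mu)<\infty$, follows from a localized version of the lower bound that forbids excessive concentration. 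Finally, since $\mu$ has finite mass and $\overline{\omega}$ is compact, weak-$*$ convergence of bounded measures is already norm convergence in $(C_0^{0,1}(\omega))^*$ (the unit ball of $C_0^{0,1}$ being precompact in $C_0$ by Arzel\`a--Ascoli), which is the asserted strong convergence.

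For the $\Gamma$-liminf inequality in part~(2), along such a subsequence split $\beta_n^{-2}J_{\kappa_n}(v_n)$ into the core-plus-magnetic part, which tends to $-\|\xi_0\|_\infty\mu(\omega)$ because after the critical-field cancellation the off-$\Lambda$ correction is nonnegative and $o(\beta_n^2)$, and the normalized interaction $\beta_n^{-2}R_n$; for the latter one proves a lower bound that, after normalization and passage to the limit using lower semicontinuity of $\nu\mapsto\iint G\,d\nu\,d\nu$ under weak-$*$ convergence (truncating $G$), yields $\liminf\beta_n^{-2}R_n\ge I(\mu)$, the Dirichlet Green's function $G$ entering through the homogeneous boundary condition on $\xi_0$. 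This gives $\liminf\beta_n^{-2}J_{\kappa_n}(v_n)\ge I(\mu)-\|\xi_0\|_\infty\mu(\omega)=J_\infty(\mu)$. For the matching $\Gamma$-limsup one constructs, for any admissible $\mu$, a recovery sequence by placing $\approx(2\pi)^{-1}\beta_n\mu(\omega)$ degree-one vortices along $\Lambda$, equidistributed with respect to $\mu$ and patched with the standard vortex profile, and checks by direct computation that $\limsup\beta_n^{-2}J_{\kappa_n}(v_n)\le J_\infty(\mu)$. Part~(3) is then routine: minimizing $J_\infty(\mu)=I(\mu)-\|\xi_0\|_\infty\mu(\omega)$ over nonnegative measures on $\Lambda$, write $\mu=t\nu$ with $\nu$ a probability measure, optimize $t^2I(\nu)-t\|\xi_0\|_\infty$ in $t$ to get $t=\|\xi_0\|_\infty/(2I(\nu))$, then minimize the strictly convex functional $I$ (unique minimizer $\mu_*$) to obtain $\mu=\tfrac{\|\xi_0\|_\infty}{2I_*}\mu_*$; the fundamental theorem of $\Gamma$-convergence together with the compactness of part~(1) identifies the limiting vorticity of minimizers.

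The principal obstacle is the lower bound $\beta_n^{-2}R_n\ge I(\mu)-o(1)$ on the renormalized interaction in the regime where the number of vortices diverges and no a priori separation is available: one cannot simply add up pairwise interactions and must instead run the ball-growth and Fubini lower-bound arguments of \cite{SS} carefully, \emph{while simultaneously} controlling the attraction of the vorticity to the measure-zero set $\Lambda$ via \eqref{SScond}, so that the order-$\beta_n^2$ interaction term and the order-$\beta_n^2$ residue of the magnetic term are both captured with matching constants. Reconciling the weak logarithmic repulsion among $\sim\beta_n$ vortices with their strong confinement to the curve, at the single scale $\beta_n^2$, is the crux of the argument.
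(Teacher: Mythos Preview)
The paper does not prove this theorem at all: it is quoted verbatim as a result of Alama, Bronsard \& Millot \cite{ABMi}, introduced by the sentence ``we may apply the following $\Gamma$--convergence theorem of Alama, Bronsard, \& Millot \cite{ABMi},'' and no proof or sketch is given in the present paper. So there is nothing here to compare your argument against.

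That said, your sketch is a faithful outline of the machinery one expects in \cite{ABMi}: the gauge choice and integration by parts producing the coupling $\lambda\int\xi_0\,\mu_\kappa$, the vortex-ball lower bound and Jerrard--Soner compactness for part~(1), the energy splitting into core, confinement, and renormalized interaction for the $\Gamma$-liminf, a recovery sequence of well-separated degree-one vortices distributed along $\Lambda$ for the $\Gamma$-limsup, and the elementary scalar optimization for part~(3). Your identification of the main difficulty --- obtaining the sharp interaction lower bound $\beta_n^{-2}R_n\ge I(\mu)-o(1)$ for a diverging number of vortices confined to a curve --- is also on target. If you want to turn this into an actual proof rather than a sketch, you should consult \cite{ABMi} directly; the present paper only invokes the result.
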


In other words, energy minimizers in this setting will have a large number, $O(\beta(\kappa))$, of point vortices concentrating near the curve $\Lambda$,
and their distribution along $\Lambda$ will be governed by the electrostatic potential $I(\mu)$.  Thus, the distribution of vortices is determined by a classical equilibrium measures problem from potential theory (see \cite{Ran, ST}.)  In the above example, $\Lambda$ is a circle in the disk $\omega=D_1(0)$, and the measure $\mu$ is normalized arclength.  Hence, the vortices will be asymptotically uniformly distributed on the circle.

\begin{figure}[htp]
\centering
\includegraphics[width=2.5in]{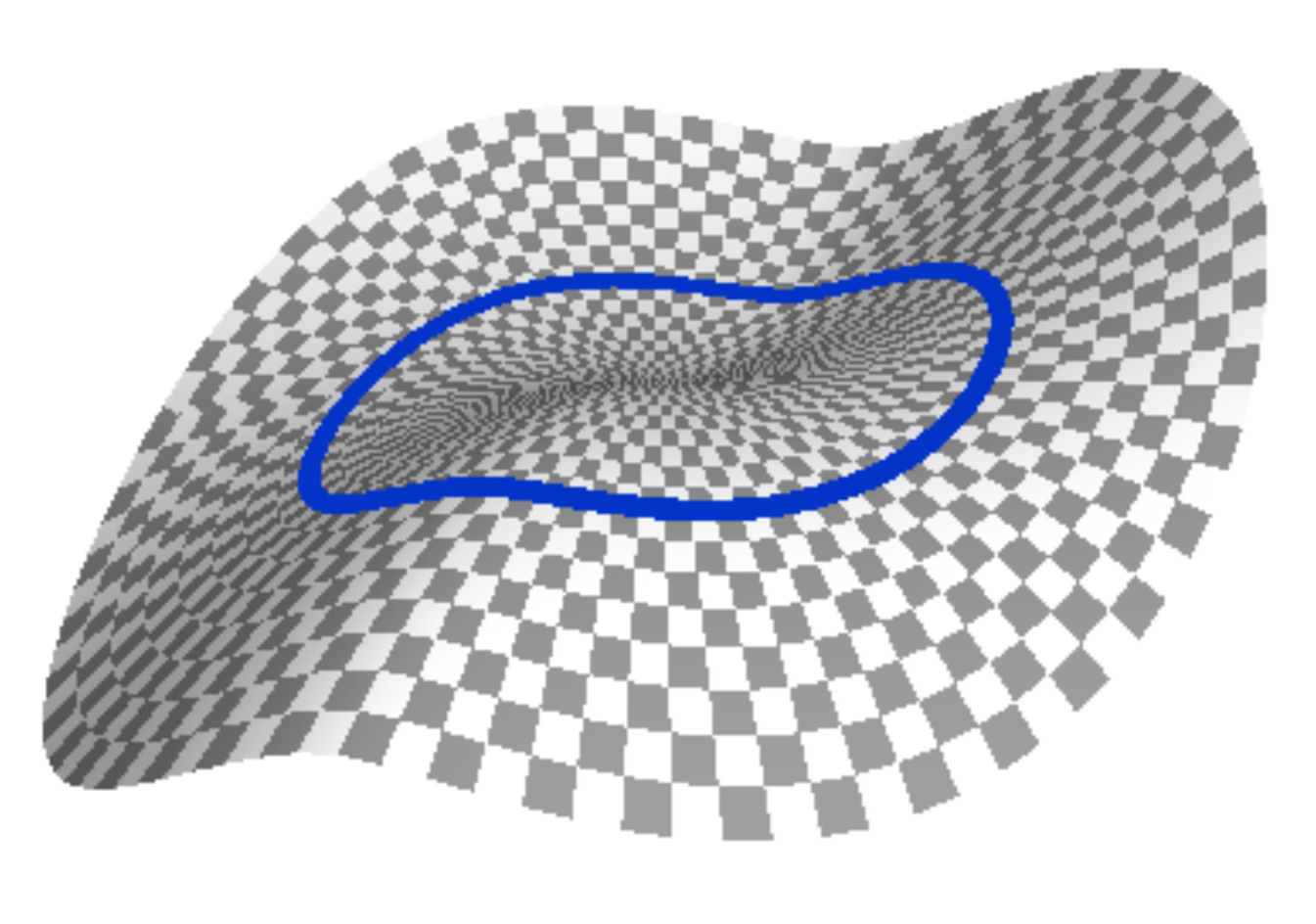}
\caption{The centroid given by $x_3=f(x')=\frac{|x'| x_2}{2} + \frac{x_1^2}{2}\ln\bigl(\frac{|x'|+x_2}{|x_1|}\bigr)$,
with external field directions $\alpha=(\alpha_1,0,0)$.  Near the lower critical field, the vortices concentrate near the circle shown.}\label{fig1}
\end{figure}


\section{Critical Case}
\label{critsec}

We begin proving the $\Gamma$-convergence results, starting with the critical case, $\eps\rho_\eps\to L\in (0,\infty)$.  For simplicity we assume $\rho_\eps=\eps^{-1}$; for other limits $L$ we incorporate the value of $L$ in ${h^{ex}}{}'$.  Following \cite{GP}, we define the Hilbert space
$\HHr$ as the completion of the space $C_0^\infty(\R^3;\R^3)$ of smooth, compactly supported divergence-free vector fields in the Dirichlet norm,  
 $\| F \|_{\HH}=[\int_{\R^n} |DF|^2\, dx]^{1/2}$.  
   It follows that $F\in \HH(\R^3;\R^3)$ is divergence-free in the sense of distributions.  We may not have $F\in L^2(\R^3;\R^3)$ (and so $\HHr\neq H^1_{\rm div}(\R^3;\R^3)$), but by the Sobolev embedding $F\in L^6(\R^3;\R^3)$.  We will require the following useful result on $\HHr$ from \cite{GP}.

\begin{lemma}\label{lem:GP}
\begin{enumerate}
\item
Let $g \in L^2(\R^3;\R^3)$ such that $\div g = 0$ in $\mathcal{D}'(\R^3)$.
Then there is a unique $F \in \HHr$ such that $\grad \times F = g$ and $\div F = 0$.
\item  For any $F\in \HHr$, there exists a constant $C$ with
$$  \| F \|_{L^6} \leq C \| F \|_{\HH} \leq C \| \nabla\times F\|_{L^2}.  $$
\end{enumerate}
\end{lemma}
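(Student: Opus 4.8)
The plan is to prove both parts together by variational means, constructing $F$ as the minimizer of the Dirichlet energy over an appropriate affine subspace and then identifying the Euler--Lagrange equation. For part (1), given $g\in L^2(\R^3;\R^3)$ with $\div g=0$, I would first observe that since $g$ is divergence-free and $L^2$, one can solve $\Delta W = -g$ componentwise in $\R^3$ (using the Newtonian potential, or the Fourier transform), obtaining $W$ with $\nabla W\in L^2$ and, after a gauge adjustment, $\div W=0$; then $F_0:=\nabla\times W$ satisfies $\nabla\times F_0 = \nabla(\div W)-\Delta W = g$ in $\mathcal D'$. This shows the affine set $\{F\in\HHr:\ \nabla\times F = g\}$ is nonempty. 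Alternatively, and more cleanly, I would work directly on the Fourier side: for $\xi\neq 0$ set $\hat F(\xi) = i\xi\times\hat g(\xi)/|\xi|^2$, which is well-defined because $\xi\cdot\hat g(\xi)=0$ a.e.\ (the transform of $\div g=0$), and check that $|\hat F(\xi)| = |\hat g(\xi)|/|\xi|$, so $\||\xi|\hat F\|_{L^2} = \|\hat g\|_{L^2}<\infty$, i.e.\ $F\in\HHr$; moreover $i\xi\times\hat F(\xi) = i\xi\times(i\xi\times\hat g)/|\xi|^2 = \hat g(\xi)$ using $\xi\cdot\hat g=0$, so $\nabla\times F=g$, and $i\xi\cdot\hat F(\xi)=0$ trivially since $\hat F\perp\xi$, giving $\div F=0$.

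For uniqueness in part (1): if $F_1,F_2\in\HHr$ both satisfy $\nabla\times F_j=g$ and $\div F_j=0$, then $E:=F_1-F_2\in\HHr$ has $\nabla\times E=0$ and $\div E=0$, so $-\Delta E = \nabla\times(\nabla\times E) - \nabla(\div E)=0$; being a harmonic vector field in $\HHr$ with $\nabla E\in L^2(\R^3)$ forces $\nabla E\equiv 0$, hence $E$ is constant, and the only constant in the completion $\HHr$ of compactly supported fields under the Dirichlet seminorm is $0$. (On the Fourier side this is immediate: $\xi\times\hat E=0$ and $\xi\cdot\hat E=0$ force $\hat E=0$ for $\xi\neq0$.)

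Part (2) follows from the Sobolev inequality in $\R^3$ applied componentwise, $\|F\|_{L^6}\le C\|\nabla F\|_{L^2}=C\|F\|_{\HH}$ — valid first for $F\in C_0^\infty$ and then extended to all of $\HHr$ by density, which is precisely the content of the Sobolev embedding already invoked in the text. For the second inequality $\|F\|_{\HH}\le C\|\nabla\times F\|_{L^2}$, I would use the pointwise Fourier identity $|\xi|^2|\hat F(\xi)|^2 = |\xi\times\hat F(\xi)|^2 + |\xi\cdot\hat F(\xi)|^2$; since $\div F=0$ gives $\xi\cdot\hat F=0$, we get $\|F\|_{\HH}^2 = \int|\xi|^2|\hat F|^2\,d\xi = \int|\xi\times\hat F|^2\,d\xi = \|\nabla\times F\|_{L^2}^2$, in fact with constant $C=1$. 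The only genuine subtlety — the main obstacle — is the careful handling of the function space $\HHr$ itself: because elements need not lie in $L^2(\R^3)$, one must be attentive that all manipulations (integration by parts, the Fourier identities, the "constants are zero" step) are justified by density of $C_0^\infty$ in the Dirichlet norm rather than by $L^2$-theory, and that $\hat F$ is interpreted correctly as a distribution whose restriction to $\{\xi\neq0\}$ is the locally integrable function written above. Since this is quoted from Giorgi \& Phillips \cite{GP}, in the write-up I would either cite it directly or include the short Fourier-analytic argument sketched here.
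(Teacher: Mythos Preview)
Your proof proposal is correct, and the Fourier-analytic argument you sketch is clean and complete. Note, however, that the paper does not actually prove Lemma~\ref{lem:GP}: it is stated as a quotation from Giorgi \& Phillips \cite{GP}, with no argument given in the paper itself. You already anticipated this at the end of your proposal, and indeed the paper takes exactly the ``cite it directly'' option you mention. So your write-up supplies strictly more than the paper does on this point; either choice (citation alone, or the short Fourier computation you give) is appropriate here.
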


Here and throughout the paper, we denote by $\nabla^\perp = (-\partial_2, \partial_1, 0)$ and hence for a vector field $F$,
$$   \nabla^\perp\cdot F = \partial_1 F_2 - \partial_2 F_1, $$
a shorthand for the third component of the curl of $F$.

\medskip

With our simplifying assumption $\eps\rho_\eps=1$, we consider vector potentials of the form $A= B+ A^{\rm ex}$, with $B\in \HHr$ and fixed ($\eps$-independent) external potential
$$  A^{\rm ex}= \frac12 h^{\rm ex}\times (x_1,x_2,x_3) =
      \frac12( h^{\rm ex}_2 x_3-h^{\rm ex}_3 x_2, h^{\rm ex}_3 x_1 - h^{\rm ex}_1 x_3, h^{\rm ex}_1 x_2 - h^{\rm ex}_2 x_1).
$$

Since we are only interested in the limit as $\eps \to 0^+$, and we keep $\kappa$ fixed, we drop the subscript $\kappa$ from the functional for simplicity of notation.
For $(u,A-A^{\rm ex}) \in H^1(\Omega;\C) \times \Hdiv(\R^3;\R^3)$, we recall
\begin{multline*}
I_{\eps}(u,A) := 
	 \frac12 \int_{\Omega} \left( | (\grad'-iA')u|^2 + \left| \frac1\eps \bigl( \partial_3 - iA_3\bigr)u \right|^2 + \frac{\kappa^2}{2} \bigl( 1 - |u|^2 \bigr)^2 \right) \, dx  \\
	+ \frac12 \int_{\R^3} \left(|h_3 - h^{\rm ex}_3|^2 + \frac{1}{\eps^2} \left| h' - \eps \rho_{\eps}{h^{\rm ex}}' \right|^2 \right)  \, dx.
\end{multline*}


We now state the complete $\Gamma$-convergence result in three parts:  the compactness of sequences of bounded energy; the lower semicontinuity of the limit; and the existence of sequences $\eps_n$, $(u_n,A_n)$ for which the energies converge.
The appropriate limiting space is:
\begin{equation}\label{eq:V_crit}
\mathcal{V}_0:= H^{1}(\omega; \C) \times L^2(\omega;\C),
\end{equation}

%


\begin{theorem}\label{thm:critical}
Let $\eps_n \to 0^+$ as $n\to+\infty$. Then
\begin{enumerate}
\item[(i)] for any sequence $\{(u_{n},A_n-A^{\rm ex})\} \in H^{1}(\Omega;\C) \times \HHr$ such that
$$
\sup_{n \in \NN} I_{\eps_n}(u_n,A_n) < \infty,
$$
there exists a subsequence (not relabeled) $\{(u_n,A_n-A^{\rm ex})\}$ and $(v,b) \in \mathcal{V}_0$ such that
\begin{align}
& u_n \wto u:=ve^{iA^{\rm ex}_3x_3}  \text{ in } H^{1}(\Omega;\C), \label{eq:crit1}\\
& A_n-  A^{\rm ex} \wto 0 \text{ in } \HHr, \\ 
& v_n := u_ne^{-i\int_0^{x_3} (A_n)_3(x',t)\,dt} \wto v \text{ in } H^{1}(\Omega;\C), \\
& b_n := \frac{1}{d(x')\eps_n} \int_{f(x')}^{g(x')} \partial_3 v_n(x',t) \, dt \wto b \text{ in }L^2(\omega;\C), \label{eq:critlast}
\end{align}
where  $d(x') := g(x')- f(x')$.

\item[(ii)] for any sequence $\{(u_{n},A_n-A^{\rm ex})\} \in H^{1}(\Omega;\C) \times \HHr$ satisfying \eqref{eq:crit1}--\eqref{eq:critlast} for some $(v,b) \in \mathcal{V}_0$, the $\Gamma$-limit of $I_{\eps_n}(u_n,A_n)$ is
$$
I_{k,0}(v,b) := \frac12 \int_{\omega} d(x') \left( \bigl|\bigl(\grad' -i{B^{\rm ex}}'\bigr)v \bigr|^2 + | b|^2 + \frac{d^2(x')}{12} \bigl|{h^{\rm ex}}'\bigr|^2 |v|^2
	+ \frac{\kappa^2}{2} \bigl( 1 - |v|^2 \bigr)^2  \right) \, dx
$$
\end{enumerate}
\end{theorem}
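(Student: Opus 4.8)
The plan is to prove the two halves of Theorem~\ref{thm:critical} separately, since each requires different tools. For part (i), the compactness statement, I would start from the energy bound $\sup_n I_{\eps_n}(u_n,A_n)<\infty$ and extract four pieces of information. First, the term $\tfrac{1}{\eps_n^2}\int_{\R^3}|h'-\eps_n\rho_{\eps_n}{h^{\rm ex}}'|^2 = \tfrac{1}{\eps_n^2}\int_{\R^3}|h'-{h^{\rm ex}}'|^2$ being bounded forces $h'\to {h^{\rm ex}}'$ strongly in $L^2$, and the term $\tfrac12\int_{\R^3}|h_3-h_3^{\rm ex}|^2$ bounded plus Lemma~\ref{lem:GP}(2) gives boundedness of $A_n-A^{\rm ex}$ in $\HHr$ (after noting $\nabla\times(A_n-A^{\rm ex})=h-h^{\rm ex}=(\tfrac1{\eps_n}(h'-{h^{\rm ex}}'),h_3-h_3^{\rm ex})$, which must be controlled carefully — here $\eps_n\rho_{\eps_n}=1$ is used so $h'-\eps_n\rho_{\eps_n}{h^{\rm ex}}'=h'-{h^{\rm ex}}'$, but the rescaled field has a $1/\eps_n$ in front of $h'$, so I need $h'-{h^{\rm ex}}'=o(\eps_n)$ in $L^2$, which is exactly what the energy gives). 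So $A_n-A^{\rm ex}\wto 0$ weakly in $\HHr$, hence $A_n\to A^{\rm ex}$ in $L^6_{\rm loc}$. Second, the bulk term $\tfrac1{\eps_n^2}\int_\Omega|(\partial_3-iA_{3n})u_n|^2$ bounded forces, after the gauge change $v_n:=u_n e^{-i\int_0^{x_3}(A_n)_3(x',t)\,dt}$ (which is legitimate since $(A_n)_3\in L^6$ so the phase is in $H^1$), that $\partial_3 v_n\to 0$ strongly in $L^2(\Omega)$ at rate $O(\eps_n)$; this yields both that the vertical average $b_n=\tfrac{1}{d(x')\eps_n}\int_f^g \partial_3 v_n$ is bounded in $L^2(\omega)$ (hence has a weak limit $b$), and that $v_n$ converges (up to subsequence) to a function $v=v(x')$ independent of $x_3$. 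Third, $|\grad' u_n|$ bounded in $L^2$ together with $A_n\to A^{\rm ex}$ in $L^6$ and $u_n$ bounded in $L^2$ (from the $(1-|u_n|^2)^2$ term) gives $|\grad_{A_n}' u_n|$ bounded, hence $u_n$ bounded in $H^1(\Omega)$; and the corresponding bounds transfer to $v_n$ via the gauge change, so $v_n\wto v$ in $H^1(\Omega)$ with $v\in H^1(\omega)$. Finally, unwinding the gauge change and using that $(A_n)_3\to A^{\rm ex}_3$ in a suitable sense gives $u_n\wto v e^{iA^{\rm ex}_3 x_3}$.

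For part (ii), the $\Gamma$-convergence (lower bound plus recovery sequence) for a sequence already satisfying \eqref{eq:crit1}--\eqref{eq:critlast}, I would handle the $\liminf$ inequality first. Split $I_{\eps_n}(u_n,A_n)$ into the terms listed. The vertical-derivative term $\tfrac{1}{2\eps_n^2}\int_\Omega|(\partial_3-iA_{3n})u_n|^2$: after gauging, $(\partial_3-iA_{3n})u_n = e^{i\phi_n}\partial_3 v_n$, so this is $\tfrac{1}{2\eps_n^2}\int_\Omega|\partial_3 v_n|^2$; by Jensen/Cauchy--Schwarz in the $x_3$-variable, $\int_f^g|\partial_3 v_n|^2\,dx_3 \ge \tfrac{1}{d(x')}\bigl|\int_f^g\partial_3 v_n\,dx_3\bigr|^2 = d(x')\eps_n^2|b_n|^2$, so this term is $\ge \tfrac12\int_\omega d(x')|b_n|^2$, and weak $L^2$ lower semicontinuity gives $\ge\tfrac12\int_\omega d(x')|b|^2$ in the limit. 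The horizontal kinetic term: write $\int_\Omega|(\grad'-iA_n')u_n|^2$, substitute $u_n = v_n e^{i\phi_n}$ with $\phi_n=\int_0^{x_3}(A_n)_3$, so $(\grad'-iA_n')u_n = e^{i\phi_n}(\grad' v_n - i(A_n'-\grad'\phi_n)v_n - \ldots)$; the key computation is that $A_n'-\grad'\phi_n$ converges, after averaging in $x_3$, to the effective potential ${B^{\rm ex}}'$ defined in \eqref{Bprime} — this is where the $-\bigl(\tfrac{f+g}{2}\bigr)(-h_2^{\rm ex},h_1^{\rm ex})$ correction comes from, as the average over $[f(x'),g(x')]$ of $(A^{\rm ex})'=\tfrac12(h_2^{\rm ex}x_3-h_3^{\rm ex}x_2, h_3^{\rm ex}x_1-h_1^{\rm ex}x_3)$ is exactly $\tfrac12(h_2^{\rm ex}\tfrac{f+g}{2}-h_3^{\rm ex}x_2, h_3^{\rm ex}x_1-h_1^{\rm ex}\tfrac{f+g}{2}) = {B^{\rm ex}}'$. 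The fluctuation of $(A^{\rm ex})'$ about its average in $x_3$, namely $\tfrac12(h_2^{\rm ex}(x_3-\tfrac{f+g}{2}), -h_1^{\rm ex}(x_3-\tfrac{f+g}{2}))$, contributes — since $v_n\to v=v(x')$ and $\int_f^g(x_3-\tfrac{f+g}{2})^2\,dx_3 = \tfrac{d^3(x')}{12}$ — exactly the potential term $\tfrac{d^2(x')}{12}|{h^{\rm ex}}'|^2|v|^2$ after dividing by $d(x')$. So in the limit the horizontal term yields $\tfrac12\int_\omega d(x')\bigl(|(\grad'-i{B^{\rm ex}}')v|^2 + \tfrac{d^2(x')}{12}|{h^{\rm ex}}'|^2|v|^2\bigr)$, using lower semicontinuity of the convex integrand and strong $L^2_{\rm loc}$ convergence of $A_n\to A^{\rm ex}$. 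The potential term $\tfrac{\kappa^2}{4}(1-|u_n|^2)^2$ passes to the limit by strong convergence $u_n\to u$ in $L^4(\Omega)$ (compact Sobolev embedding) and $|u|=|v|$ independent of $x_3$, contributing $\tfrac{\kappa^2}{4}\int_\omega d(x')(1-|v|^2)^2$. The field terms $\tfrac12\int_{\R^3}|h_3-h_3^{\rm ex}|^2 + \tfrac1{2\eps_n^2}\int_{\R^3}|h'-{h^{\rm ex}}'|^2 \ge 0$ and drop out in the liminf (they vanish in the limit along the constrained sequence, consistent with $b$ carrying no field contribution). Summing gives $\liminf I_{\eps_n}(u_n,A_n)\ge I_{k,0}(v,b)$.

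For the recovery (upper bound), given $(v,b)\in\mathcal{V}_0$, I would build an explicit sequence: first take $v,b$ smooth by density, then set $v_n(x) := v(x') + \eps_n(x_3 - \tfrac{f(x')+g(x')}{2})\,b(x')$ — chosen so that $\tfrac{1}{d(x')\eps_n}\int_f^g\partial_3 v_n\,dx_3 = b(x')$ exactly and $\partial_3 v_n = \eps_n b(x') = O(\eps_n)$ — choose $A_n := A^{\rm ex}$ (so $B=0$, giving $h=h^{\rm ex}$ and killing the field terms, with $(A_n)_3 = A^{\rm ex}_3$), and set $u_n := v_n e^{i\int_0^{x_3}A^{\rm ex}_3(x',t)\,dt} = v_n e^{iA^{\rm ex}_3 x_3}$ since $A^{\rm ex}_3=\tfrac12(h_1^{\rm ex}x_2-h_2^{\rm ex}x_1)$ is independent of $x_3$. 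One checks directly that \eqref{eq:crit1}--\eqref{eq:critlast} hold (in fact with strong convergence), and then computes $\lim_n I_{\eps_n}(u_n,A_n)$ term by term: the vertical term gives exactly $\tfrac12\int_\omega d(x')|b|^2$ in the limit (the correction from $b$ depending on $x'$ is higher order), the horizontal term gives $\tfrac12\int_\omega d(x')(|(\grad'-i{B^{\rm ex}}')v|^2 + \tfrac{d^2}{12}|{h^{\rm ex}}'|^2|v|^2)$ by the same averaging computation as above (now an equality in the limit, with the cross-terms odd in $x_3-\tfrac{f+g}{2}$ integrating to zero), the potential term gives $\tfrac{\kappa^2}{4}\int_\omega d(x')(1-|v|^2)^2$, and the field terms are identically zero. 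Then remove the smoothness assumption by a diagonal argument using the density of smooth $(v,b)$ and the continuity of $I_{k,0}$ in the $\mathcal{V}_0$ topology.

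The main obstacle I anticipate is the bookkeeping in the gauge change for part (ii)'s lower bound: one must show that $A_n' - \grad'\bigl(\int_0^{x_3}(A_n)_3\,dt\bigr)$, averaged over the slab $[f(x'),g(x')]$, converges weakly to ${B^{\rm ex}}'$ — this requires knowing that $A_n\to A^{\rm ex}$ strongly in $L^2_{\rm loc}$ (from the compactness part) and controlling the $x'$-derivatives of the phase, which involves $\grad'(A_n)_3$; a priori we only control $\nabla\times A_n$ in $L^2$, not all of $DA_n$. The way around this is to \emph{not} differentiate the phase explicitly but instead work with $\grad_{A_n}u_n$ directly: write $(\grad'-iA_n')u_n$ and use that its $x_3$-average, tested against smooth functions, is governed by $\grad' v$ and the $x_3$-average of $A_n'$, the latter converging to ${B^{\rm ex}}'$ by the $L^2_{\rm loc}$ convergence of $A_n\to A^{\rm ex}$ — no derivative of $(A_n)_3$ appears if one is careful to integrate by parts in $x_3$ to move the phase onto test functions. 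A second, more technical point is justifying that $\nabla\times(A_n-A^{\rm ex})\to 0$ in $L^2$ at the right rate to kill the field terms while still extracting enough compactness; this hinges critically on the hypothesis $\eps_n\rho_{\eps_n}=1$ (equivalently $\eps\rho_\eps\to L$) and is precisely the feature that distinguishes the critical regime.
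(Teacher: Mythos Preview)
Your proposal is correct and follows essentially the same route as the paper. One remark worth making: for the $\liminf$ on the horizontal kinetic term, the paper sidesteps your anticipated obstacle entirely by working with $u_n$ (not $v_n$) --- it expands $|(\grad'-iA_n')u_n|^2$ into $|\grad'u_n|^2 + |A_n'u_n|^2 + 2\Re(A_n'\ol{u}_n\cdot\grad'u_n)$, passes to the limit using $A_n\to A^{\rm ex}$ in $H^1_{\rm loc}$ and $u_n\to u$ in $L^4$ to obtain $\int_\Omega|(\grad'-i{A^{\rm ex}}')u|^2$, and \emph{only then} substitutes $u=ve^{iA^{\rm ex}_3 x_3}$ and integrates in $x_3$; since $A^{\rm ex}_3$ is an explicit linear function of $x'$, no control of $\grad'(A_n)_3$ is ever needed. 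Your density step in the recovery sequence is a good addition: the paper's construction $u_n=e^{iA^{\rm ex}_3 x_3}(v+\eps_n b\,x_3)$ tacitly needs $b\in H^1(\omega)$ for $u_n\in H^1(\Omega)$, so approximating $(v,b)$ by smooth pairs first and then diagonalizing is the rigorous way to handle general $b\in L^2(\omega)$.
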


\begin{proof}[ of Theorem \ref{thm:critical} (i)]

Let $K := \sup_{n \in \NN} I_{\eps_n}(v_n, b_n, A_n) < \infty$.
Then
\begin{equation}\label{unifbd}
\int_{\R^3} \left(\frac{1}{\eps_n^2}\left| h_n' - {h^{\rm ex}}'\right|^2 + |h_3 - h^{\rm ex}_3|^2\right) \, dx \leq K.
\end{equation}
In particular, Lemma~\ref{lem:GP} implies that
$$
\sup_{n\in\NN} \|\grad (A_n - A^{\rm ex})\|_{L^2(\R^3;\R^{3\times 3})} < \infty.
$$
Thus,  we deduce that $\{A_n - A^{\rm ex}\}$ is bounded in $\HH$ and in $L^6(\R^3;\R^3)$, thus there exists a subsequence (not relabeled) such that
$$
A_n- A^{\rm ex} \wto A \quad \text{ in } \HHr.
$$
By weak convergence we have $\div A=0$, and by the uniform bound
\eqref{unifbd} we may conclude that $\|h'_n- h^{ex}{}'\|_{L^2}\to 0$, and thus
$$
\grad \times A = 0.
$$

Hence by the uniqueness in Lemma \ref{lem:GP}, we deduce that $A = 0$.

Moreover, we know that $\{u_n\}_{n \in \NN}$ is bounded in $L^4 (\Omega;\C)$,
and because $\grad A_n$ is bounded in $L^2(\R^3;\R^{3\times 3})$, 
\begin{align*}
& u_n \text{ is bounded in } L^2(\Omega;\C), \\
& \grad u_n \text{ is bounded in } L^2(\Omega;\C^3),
\end{align*}
so there exists a further subsequence (not relabeled) such that
$$
u_n \wto u \quad \text{ in } H^1(\Omega;\C).
$$

Also, if we define $v_n(x) = u_n(x) e^{-i\int_{0}^{x_3} {(A_n)}_3(x',t)\,dt}$, then we have
\begin{align*}
& |v_n|=|u_n| \text{, which is bounded in } L^2(\Omega;\C), \\
& \grad v_n \text{ is bounded in } L^2(\Omega;\C^3), \\
& \partial_3 v_n \to 0 \quad \text{ in } L^2(\Omega;\C),
\end{align*}
so we deduce that there is a further subsequence (not relabeled) such that
$$
v_n \wto v \quad \text{ in } H^1(\Omega;\C)
$$
with $\partial_3 v = 0$.
We then have that 
\begin{equation}\label{eq:uv}
u = v e^{i \int_{0}^{x_3} A^{\rm ex}_3(x',t)dt} = v e^{i A^{\rm ex}_3 x_3}.
\end{equation}

On the other hand, we know that $b_n$ is bounded in $L^2(\omega;\C)$, hence there exists a subsequence (not relabeled) and a function $b \in L^2(\omega;\C)$ such that
$$
b_n \wto b \quad \text{ in } L^2(\omega;\C).
$$

This completes the proof of part (i).
\end{proof}


To prove part (ii) of Theorem~\ref{thm:critical}, we derive matching upper and lower bounds.  We begin with:

\begin{proposition}[$\Gamma-\liminf$ inequality]
Let $(v,b) \in \mathcal{V}_0$ and consider sequences $\{\eps_n\} \subset \R$, $\{u_n\} \subset H^1(\Omega;\C)$, and $\{A_n-A^{\rm ex}\} \subset \HHr$ satisfying 
\begin{align*}
& \eps_n \to 0^+, \\
& u_n \wto u:=v e^{i A^{\rm ex}_3 x_3} \text{ in } H^1(\Omega;\C), \\
& v_n := u_n e^{-i \int_0^{x_3} {(A_n)}_3(x',t)dt} \wto v \text{ in } H^1(\Omega;\C), \\
& b_n:= \frac{1}{\eps_n d(x')} \int_{f(x')}^{g(x')} \partial_3 v_n \, dx_3 \wto b \text{ in } L^2(\omega;\C), \\
& A_n - A^{\rm ex} \wto  0 \text{ in } \HHr. 
\end{align*}

Then
$$
\liminf_{n \to \infty} I_{\eps_n}(u_n, A_n) 
	\geq \frac12 \int_{\omega} d(x') \left( \bigl|\bigl(\grad' -i{B^{\rm ex}}'\bigr) v\bigr|^2 + |b|^2 + \frac{d^2(x')}{12} |{h^{\rm ex}}'|^2 |v|^2
	+ \frac{\kappa^2}{2} \bigl( 1 - |v|^2 \bigr)^2  \right) \, dx',
$$
where $B^{\rm ex}$ is as on Theorem \ref{thm:critical}.
\end{proposition}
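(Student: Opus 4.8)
The plan is to isolate the contributions to $I_{\eps_n}(u_n,A_n)$ term by term, pass to the limit using weak lower semicontinuity, and recover the precise structure of the limiting energy (including the curvature-induced potential ${B^{\rm ex}}'$ and the $\tfrac{d^2}{12}|{h^{\rm ex}}'|^2|v|^2$ term) from a careful analysis of the vertical averages. First I would record that since the bad term $\tfrac{1}{\eps_n^2}|h_n'-{h^{\rm ex}}'|^2$ and the term $|h_{3,n}-h_3^{\rm ex}|^2$ are both nonnegative, dropping them only decreases the energy; thus it suffices to bound from below the reduced functional
$$
\frac12\int_\Omega\Bigl(|(\grad'-iA_n')u_n|^2+\Bigl|\tfrac1{\eps_n}(\partial_3-i(A_n)_3)u_n\Bigr|^2+\tfrac{\kappa^2}{2}(1-|u_n|^2)^2\Bigr)\,dx.
$$
The key algebraic device is the substitution $u_n=v_n\,e^{i\int_0^{x_3}(A_n)_3(x',t)\,dt}$, which is a gauge change killing the third component of the gauge field: one computes $(\partial_3-i(A_n)_3)u_n=(\partial_3 v_n)\,e^{i\int_0^{x_3}(A_n)_3}$ and $(\grad'-iA_n')u_n=\bigl(\grad' v_n - i(A_n' + \grad'\!\int_0^{x_3}(A_n)_3)\,v_n\bigr)e^{i\int_0^{x_3}(A_n)_3}$. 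Writing $\widetilde A_n':= A_n' + \grad'\!\int_0^{x_3}(A_n)_3(x',t)\,dt$, the reduced functional becomes
$$
\frac12\int_\Omega\Bigl(|(\grad'-i\widetilde A_n')v_n|^2+\tfrac1{\eps_n^2}|\partial_3 v_n|^2+\tfrac{\kappa^2}{2}(1-|v_n|^2)^2\Bigr)\,dx.
$$

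Next I would analyze $\widetilde A_n'$. Since $A_n-A^{\rm ex}\wto 0$ in $\HHr$, in particular $A_n\to A^{\rm ex}$ strongly in $L^2_{\rm loc}$ and $(A_n)_3\to A^{\rm ex}_3 = \tfrac12(h_1^{\rm ex}x_2-h_2^{\rm ex}x_1)$, which is independent of $x_3$; hence $\int_0^{x_3}(A_n)_3(x',t)\,dt\to A^{\rm ex}_3\,x_3$ and, taking the $x'$-gradient, $\grad'\!\int_0^{x_3}(A_n)_3\to x_3\,\grad' A^{\rm ex}_3 = \tfrac{x_3}{2}(h_2^{\rm ex},-h_1^{\rm ex})$ — wait, more carefully $\grad' A^{\rm ex}_3 = \tfrac12(-h_2^{\rm ex},h_1^{\rm ex})$, so the limit is $x_3\cdot\tfrac12(-h_2^{\rm ex},h_1^{\rm ex})$. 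Meanwhile $(A^{\rm ex})' = \tfrac12(h_2^{\rm ex}x_3 - h_3^{\rm ex}x_2,\ h_3^{\rm ex}x_1 - h_1^{\rm ex}x_3)$. Adding, the linear-in-$x_3$ pieces combine to $x_3(h_2^{\rm ex},-h_1^{\rm ex})$, so $\widetilde A_n'\to \widetilde A^{\rm ex}{}' := \tfrac12 h_3^{\rm ex}(-x_2,x_1) + x_3(h_2^{\rm ex},-h_1^{\rm ex})$ strongly in $L^2(\Omega)$. This is exactly ${B^{\rm ex}}' + (x_3 - \tfrac{f+g}{2})(h_2^{\rm ex},-h_1^{\rm ex})$ in the notation of \eqref{Bprime}; thus $\widetilde A^{\rm ex}{}' = {B^{\rm ex}}' + (x_3-\bar c(x'))(h_2^{\rm ex},-h_1^{\rm ex})$ with $\bar c = \tfrac{f+g}{2}$, and the $x_3$-dependent part integrates, over each vertical fibre and against $|v|^2$ (with $v$ independent of $x_3$), to the Fourier-type term $\tfrac{d^2}{12}|{h^{\rm ex}}'|^2|v|^2$ after squaring. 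Concretely, since $\partial_3 v=0$, the expansion $|(\grad'-i\widetilde A_n')v_n|^2 = |(\grad'-i{B^{\rm ex}}')v_n|^2 - 2(x_3-\bar c)\,(h_2^{\rm ex},-h_1^{\rm ex})\cdot(iv_n,(\grad'-i{B^{\rm ex}}')v_n) + (x_3-\bar c)^2|{h^{\rm ex}}'|^2|v_n|^2$, integrated in $x_3$ over $(f,g)$: the cross term vanishes in the limit because $\int_f^g (x_3-\bar c)\,dx_3=0$ and the $v_n$-factors converge appropriately, while $\int_f^g(x_3-\bar c)^2\,dx_3 = d^3/12$.

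The remaining steps are standard lower-semicontinuity arguments. The term $\tfrac1{\eps_n^2}\int_\Omega|\partial_3 v_n|^2\,dx$ is handled by Jensen's inequality on each fibre: $\int_f^g|\partial_3 v_n|^2\,dx_3 \ge \tfrac1{d(x')}\bigl|\int_f^g\partial_3 v_n\,dx_3\bigr|^2$, so $\tfrac1{\eps_n^2}\int_\Omega|\partial_3 v_n|^2 \ge \int_\omega d(x')|b_n|^2\,dx'$, and then $\liminf\int_\omega d|b_n|^2 \ge \int_\omega d|b|^2$ by weak lower semicontinuity of the (weighted) $L^2$ norm, using $b_n\wto b$. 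For the gradient term, after the reduction above, $\liminf_n\frac12\int_\Omega|(\grad'-i{B^{\rm ex}}')v_n|^2\,dx \ge \frac12\int_\Omega|(\grad'-i{B^{\rm ex}}')v|^2\,dx = \frac12\int_\omega d(x')|(\grad'-i{B^{\rm ex}}')v|^2\,dx'$: this follows from weak lower semicontinuity of $w\mapsto\int_\Omega|(\grad'-i{B^{\rm ex}}')w|^2$ (a convex functional, continuous in the $H^1$ strong topology since ${B^{\rm ex}}'\in L^\infty_{\rm loc}$ and $\Omega$ is bounded), together with $v_n\wto v$ in $H^1(\Omega)$ and $\partial_3 v=0$ so the $x_3$-integration reduces to the weight $d(x')$. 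The potential term $\tfrac{\kappa^2}{2}\int_\Omega(1-|v_n|^2)^2$ passes to the limit because $v_n\to v$ strongly in $L^4(\Omega)$ (Rellich: $H^1(\Omega)\hookrightarrow\hookrightarrow L^4(\Omega)$ in three dimensions), so $(1-|v_n|^2)^2\to(1-|v|^2)^2$ in $L^1$; combined with $\partial_3 v=0$ this gives $\tfrac{\kappa^2}{2}\int_\omega d(x')(1-|v|^2)^2\,dx'$. Summing the four lower bounds yields exactly $I_{\kappa,0}(v,b)$.

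The main obstacle is controlling the cross term $-2(x_3-\bar c(x'))(h_2^{\rm ex},-h_1^{\rm ex})\cdot(iv_n,(\grad'-i{B^{\rm ex}}')v_n)$ in the passage to the limit: although its $x_3$-integral vanishes identically because the factor $(x_3-\bar c)$ has zero average over the fibre and $v_n$, $\grad' v_n$ are $x_3$-"almost constant" only in the limit, one needs that $(iv_n,\grad' v_n)$ does not oscillate in $x_3$ in a way that resonates with $(x_3-\bar c)$; this is where $\partial_3 v_n\to 0$ strongly in $L^2(\Omega)$ is essential, since it forces $(iv_n,\grad' v_n)(x',x_3) - (iv_n,\grad'v_n)(x',\cdot)\text{-average} \to 0$ in a suitable sense, making the fibrewise cancellation survive the limit. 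Rigorously one splits $(iv_n,(\grad'-i{B^{\rm ex}}')v_n) = $ its vertical average plus a remainder, estimates the remainder in $L^1(\Omega)$ by $C\eps_n$ times bounded energy quantities using $\|\partial_3 v_n\|_{L^2}=O(\eps_n)$, and observes the vertical average pairs against $\int_f^g(x_3-\bar c)\,dx_3=0$. A secondary technical point is verifying the strong $L^2(\Omega)$ convergence $\widetilde A_n'\to\widetilde A^{\rm ex}{}'$ on the bounded set $\Omega$ from the weak $\HHr$ convergence, which uses the compact embedding of $\HHr$ into $L^2_{\rm loc}(\R^3)$ (equivalently, $A_n\to A^{\rm ex}$ strongly in $L^2(\Omega)$ since $A_n - A^{\rm ex}\wto 0$ in $\HHr \hookrightarrow\hookrightarrow L^2_{\rm loc}$), so that products like $\widetilde A_n' v_n$ converge strongly in $L^1(\Omega)$ and cross terms of the form $\int_\Omega \widetilde A_n'\cdot(iv_n,\grad'v_n)$ pass to the limit.
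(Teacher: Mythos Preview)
Your argument reaches the right destination and handles the $b$-term (via Jensen on fibres) and the potential term (via Rellich) exactly as the paper does. The route you take for the covariant gradient term, however, is genuinely different from the paper's, and it is also where you create avoidable difficulties for yourself.

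The paper never gauge-transforms to $v_n$ in the gradient term. It keeps $u_n,A_n$ and passes to the limit directly: expand $|(\nabla'-iA_n')u_n|^2$, use weak lower semicontinuity on $|\nabla' u_n|^2$, and strong $L^p_{\rm loc}$ convergence $A_n\to A^{\rm ex}$ (from the compact embedding $\HHr\hookrightarrow\hookrightarrow L^p_{\rm loc}$, $p<6$) together with $u_n\to u$ in $L^4$ to pass the remaining two terms to the limit. This yields $\liminf\ge \int_\Omega |(\nabla'-i{A^{\rm ex}}')u|^2$. Only \emph{after} this does the paper insert $u=v\,e^{iA^{\rm ex}_3 x_3}$ and perform the $x_3$-integration; since the limit $v$ is \emph{exactly} $x_3$-independent, the cross term against $(x_3-\bar c)$ vanishes identically and the square completes algebraically to give ${B^{\rm ex}}'$ and $d^2/12$. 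No ``main obstacle'' arises.

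By contrast, your approach gauge-transforms first, producing the effective potential $\widetilde A_n'=A_n'\pm\nabla'\!\int_0^{x_3}(A_n)_3$. Two issues: (i) there is a sign slip---the gauge change gives $\widetilde A_n'=A_n'-\nabla'\phi_n$, not $+$; your final formula for $\widetilde A^{\rm ex}{}'$ happens to come out right, but the derivation is inconsistent. More seriously, (ii) the strong convergence $\widetilde A_n'\to\widetilde A^{\rm ex}{}'$ in $L^2(\Omega)$ that you invoke is not delivered by the compact embedding $\HHr\hookrightarrow L^2_{\rm loc}$ alone: the term $\nabla'\!\int_0^{x_3}(A_n)_3$ involves \emph{derivatives} of $A_n$, which from $A_n-A^{\rm ex}\wto 0$ in $\HHr$ converge only weakly in $L^2$. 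One can rescue this by rewriting $\partial_j(A_n)_3=\partial_3(A_n)_j\pm(h_n)_k$ and using the strong convergence $h_n'\to {h^{\rm ex}}'$ coming from the energy bound, but this is extra work you have not supplied. Likewise the cross-term argument you flag as the ``main obstacle''---splitting $(iv_n,\nabla'v_n)$ into vertical average plus remainder controlled by $\|\partial_3 v_n\|_{L^2}=O(\eps_n)$---is correct in spirit but entirely unnecessary: it is an artefact of expanding in $x_3$ before passing to the limit rather than after.

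In short: your proof can be completed, but the paper's ordering (limit first in $(u_n,A_n)$, then exact algebra on the limit) is both shorter and avoids the two technical points above.
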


\begin{proof}[]

Since $v_n \to v$ in $H^1(\Omega;\C)$ and $|u_n|=|v_n|$, we know that $v_n \to v$ in $L^2(\Omega;\C) \cap L^6(\Omega;\C)$, hence
$$
\lim_{n \to \infty} \int_{\Omega} \frac{\kappa^2}{4} \bigl( 1 - |u_n|^2 \bigr)^2 \, dx 
	= \lim_{n \to \infty} \int_{\Omega} \frac{\kappa^2}{4} \bigl( 1 - |v_n|^2 \bigr)^2 \, dx 
	= \int_{\Omega} \frac{\kappa^2}{4} \bigl( 1 - |v|^2 \bigr)^2 \, dx.
	= \int_{\omega} d(x') \frac{\kappa^2}{4} \bigl( 1 - |v|^2 \bigr)^2 \, dx'.
$$

Then because $b_n \wto b$ in $L^2(\omega;\C)$ and
\begin{align*}
\liminf_{n \to \infty} \frac12 \int_{\Omega}\left| \frac{1}{\eps_n}(\partial_3 -i{(A_n)}_3)u_n \right|^2 \, dx 
	& \geq \liminf_{n \to \infty} \frac12 \int_{\omega}\left| \int_{f}^{g} \frac{1}{\eps_n}(\partial_3 -i{(A_n)}_3)u_n \, dx_3 \right|^2 \, dx'  \\
	& = \liminf_{n \to \infty} \frac12 \int_{\omega} d(x') |b_n|^2 \, dx' 
		\geq \frac12 \int_{\omega} d(x') |b|^2 \, dx',
\end{align*}
using Fubini's theorem, H\"older's inequality, and Fatou's lemma. 

Moreover, 
$$
\grad \times A_n - h^{\rm ex}\wto 0 \text{ in } L^2(\R^3;\R^3),
$$ 
so we write
$$
\bigl|(\grad' - iA_n') u_n \bigr|^2 = |\grad' u_n|^2 + |A_n' u_n|^2 + 2\,\Re(A_n' \ol{u}_n \cdot \grad' u_n).
$$

Using the fact that $\grad' u_n \wto \grad' u$ in $L^2(\Omega;\C)$, 
$$
\liminf_{n \to \infty} \int_{\Omega} |\grad' u_n|^2 \,dx 
	\geq \int_{\Omega} |\grad' u|^2 \,dx,
$$
and since $A_n \to A^{\rm ex}$ in $H^1_{loc}$ and $u_n \to u$ in $L^4(\Omega;\C)$, 
$$
\lim_{n \to \infty} \int_{\Omega} |A'_n u_n|^2 \,dx 
	= \int_{\Omega} |{A^{\rm ex}}' u|^2 \,dx.
$$

Also $A'_n \ol{u}_n \to {A^{\rm ex}}' \ol{u}$ in $L^2(\Omega;\C)$, so 
$$
\lim_{n \to \infty} \int_{\Omega} 2\,\Re\bigl(A'_n \ol{u}_n \cdot \grad' u_n\bigr) \, dx
	= \int_{\Omega} 2\,\Re\bigl({A^{\rm ex}}' \ol{u} \cdot \grad' u\bigr) \, dx.
$$

This yields
$$
\liminf_{n \to \infty} \int_{\Omega} \bigl|(\grad' - iA'_n) u_n \bigr|^2 \, dx
	\geq \int_{\Omega} \bigl|\bigl(\grad' - i{A^{\rm ex}}'\bigr) u \bigr|^2 \, dx.
$$

To complete the proof, we write the last term in a different form:
\begin{align}
\bigl|\bigl(\grad' - i(A^{\rm ex})'\bigr) u \bigr|^2
	& = \bigl|\bigl(\grad' - i\bigl({A^{\rm ex}_{\|}}' + {A^{\rm ex}_{\perp}}' \bigr)\bigr) u \bigr|^2 \nonumber\\
	& = \bigl|\bigl(\grad' - i\bigl({A^{\rm ex}_{\|}}' + {A^{\rm ex}_{\perp}}' + {\ts \frac12}(h_2^{\rm ex},-h^{\rm ex}_1) x_3\bigr) v\bigr|^2 \nonumber\\
	& = \bigl|\bigl(\grad' - i\bigl( {A^{\rm ex}_{\perp}}' + (h_2^{\rm ex},-h^{\rm ex}_1) x_3 \bigr)\bigr) v \bigr|^2 \nonumber\\
	& = \bigl|\bigl(\grad' - i {A^{\rm ex}_{\perp}}'\bigr) v \bigr|^2  + \frac12|{h^{\rm ex}}'|^2 |v|^2 x_3^2 
		+ 2 \Im\bigl( \bigl(\grad' - i {A^{\rm ex}_{\perp}}'\bigr) v \cdot (h_2^{\rm ex},-h^{\rm ex}_1) \ol{v} \bigr) x_3, \label{eq:indepx3}
\end{align}
where we recall that $A^{\rm ex}_{\|} = \frac12 (h^{\rm ex}_2 x_3, -h^{\rm ex}_1 x_3, h^{\rm ex}_1 x_2 - h^{\rm ex}_2 x_1)$, so $\grad \times A^{\rm ex}_{\|} = (h^{\rm ex}_1,h^{\rm ex}_2,0)$, and we recall that $A^{\rm ex}_{\perp} = \frac{h^{\rm ex}_3}{2} (-x_2,x_1,0)$.
Since we know that none of the terms in \eqref{eq:indepx3} depends on $x_3$, and
$$
\int_{f}^{g} x_3 \, dx_3 = \frac{g^2-f^2}{2} = \frac{d (f+g)}{2}
\qquad \text{ and } \qquad
\int_{f}^{g} x_3^2 \, dx_3 = \frac{g^3-f^3}{3} = \frac{d(f^2+fg+g^2)}{3},
$$
we deduce that
\begin{align}
\int_{\Omega} \bigl|(\grad' - i{A^{\rm ex}}') u \bigr|^2 \, dx
	& = \int_{\omega} \int_{-\frac12}^{\frac12} \bigl|(\grad' - i{A^{\rm ex}}') u \bigr|^2 \, dx_3 \, dx' \notag\\
	& = \int_{\omega} d(x') \biggl( \bigl|\bigl(\grad' - i {A^{\rm ex}_{\perp}}'\bigr) v \bigr|^2 + 2 \Im\bigl( \bigl(\grad' - i {A^{\rm ex}_{\perp}}'\bigr) v \cdot (h_2^{\rm ex},-h^{\rm ex}_1) \ol{v} \bigr) \left(\frac{f+g}{2}\right) \notag\\
	& \qquad + |{h^{\rm ex}}'|^2 |v|^2 \left(\frac{f^2+fg+g^2}{3}\right) \biggr) \, dx'  \notag\\
     & = \int_{\omega} d(x') \biggl( \bigl|\bigl(\grad' - i {B^{\rm ex}}'\bigr) v  \bigr|^2  + |{h^{\rm ex}}'|^2 |v|^2 \left(\frac{f^2+fg+g^2}{3} - \frac{(f+g)^2}{4}\right) \biggr) \, dx'  \notag\\
     & = \int_{\omega} d(x') \biggl( \bigl|\bigl(\grad' - i {B^{\rm ex}}'\bigr) v  \bigr|^2  + \frac{d^2(x')}{12}|{h^{\rm ex}}'|^2 |v|^2 \biggr) \, dx',  \label{eq:Bv} 
\end{align}
where ${B^{\rm ex}}' = {A^{\rm ex}_{\perp}}' + {\ts \frac{f+g}{2}}(-h^{\rm ex}_2,h^{\rm ex}_1)$ and on the third equality we completed the square.
This completes the proof.
\end{proof}






We complete the proof of Theorem~\ref{thm:critical} by means of an upper bound construction:

\begin{proposition}[$\Gamma-\limsup$ inequality]
\label{ub:critical}

Let $(v,b) \in \mathcal{V}_0$ and let $\{\eps_n\} \in \R$ be a sequence such that $\eps_n \to 0^+$.
Then, there exist sequences $\{v_n\} \subset H^1(\Omega;\C)$, $\{b_n\} \subset L^2(\omega;\C)$, and $\{A_n-A^{\rm ex}\} \subset \HHr$ such that
\begin{align*}
& \eps_n \to 0^+, \\
& u_n := v_n e^{i \int_0^{x_3} {(A_n)}_3(x',t)dt} \wto u:=v e^{i A^{\rm ex}_3 x_3} \text{ in } H^1(\Omega;\C), \\
& v_n \wto v \text{ in } H^1(\Omega;\C), \\
& b_n = \frac{1}{\eps_n d(x')} \int_{f(x')}^{g(x')} \partial_3 v_n \,dx_3\wto b \text{ in } L^2(\omega;\C), \\
& A_n - A^{\rm ex} \wto 0\text{ in } \HHr. 
\end{align*}
and
$$
\lim_{n \to \infty} I_{\eps_n}(u_n,A_n) 
	=\frac12 \int_{\omega} d(x') \left( \bigl|\bigl(\grad' -i{B^{\rm ex}}'\bigr) v\bigr|^2 + | b|^2 + \frac{d^2(x')}{12} |{h^{\rm ex}}'|^2 |v|^2
	+ \frac{\kappa^2}{2} \bigl( 1 - |v|^2 \bigr)^2  \right) \, dx',
$$
where $B^{\rm ex}$ is as on Theorem \ref{thm:critical}.
\end{proposition}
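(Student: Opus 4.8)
The plan is to build an explicit recovery sequence by reverse-engineering the computations in the $\Gamma$--$\liminf$ proof. The natural first guess is to take $v_n = v$ independent of $x_3$, but this cannot capture the Cosserat vector $b$: we need $\partial_3 v_n$ to be of order $\eps_n$ with prescribed vertical average $\eps_n d(x') b(x')$. So the plan is to set
$$ v_n(x',x_3) := v(x') + \eps_n\, b(x')\,\chi(x',x_3), $$
where $\chi$ is a fixed smooth function with $\partial_3\chi$ having average one over $(f(x'),g(x'))$ — e.g. $\chi(x',x_3) = x_3 - f(x')$, so that $\partial_3\chi\equiv 1$ and $\frac1{d(x')}\int_f^g \partial_3\chi\,dx_3 = 1$. (One must smooth $v$ and $b$ first by density of $C^\infty(\ol\omega)$ in $H^1(\omega)$ and $L^2(\omega)$, do the construction, and then diagonalize; this is routine.) For the vector potential I would simply take $A_n := A^{\rm ex}$, i.e.\ $B_n\equiv 0$, which trivially satisfies $A_n - A^{\rm ex}\wto 0$ in $\HHr$ and makes the magnetic field term $\int_{\R^3}(|h_3-h_3^{\rm ex}|^2 + \eps_n^{-2}|h'-\eps_n{h^{\rm ex}}'|^2)\,dx = \int_{\R^3}|{h^{\rm ex}}'|^2\,dx$ — wait, that diverges. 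So instead one takes $A_n = A^{\rm ex} + B_n$ with $B_n\in\HHr$ chosen (via Lemma~\ref{lem:GP}(1)) so that $\grad\times B_n$ cancels $\eps_n{h^{\rm ex}}'$ in the parallel components; since $\eps_n{h^{\rm ex}}'$ is a constant (not $L^2$), one truncates it to a large ball $B_{R_n}$ with $R_n\to\infty$ slowly, obtaining $\|B_n\|_{\HH}\lesssim \eps_n R_n^{3/2}\to 0$ if $R_n$ grows slowly enough, so $B_n\wto 0$ and the magnetic field energy $\to 0$. Then $(A_n)_3 = A_3^{\rm ex} + (B_n)_3 \to A_3^{\rm ex}$, and $u_n := v_n e^{i\int_0^{x_3}(A_n)_3\,dt}$.

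**Key steps in order.** First, reduce to smooth $v,b$ by density and a diagonal argument. Second, define $v_n$, $B_n$, $A_n$, $u_n$ as above and verify all five convergences in the statement: the Cosserat convergence $b_n\wto b$ follows since $\partial_3 v_n = \eps_n b(x')\partial_3\chi = \eps_n b(x')$, whence $b_n = \frac1{\eps_n d}\int_f^g \eps_n b\,dx_3 = b$ exactly (after smoothing, up to the diagonalization error); $v_n\to v$ strongly in $H^1(\Omega;\C)$ since $v_n - v = O(\eps_n)$ in $H^1$; and $u_n\wto u$ follows from $v_n\to v$, $(A_n)_3\to A_3^{\rm ex}$ in $L^2_{\rm loc}$, and the gauge formula \eqref{eq:uv}. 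Third, compute $\lim_n I_{\eps_n}(u_n,A_n)$ term by term: the potential term $\to \int_\omega d\,\frac{\kappa^2}{4}(1-|v|^2)^2\,dx'$ by strong $L^4$ convergence; the magnetic term $\to 0$ by the choice of $B_n$ and $R_n$; the vertical kinetic term $\frac12\int_\Omega|\eps_n^{-1}(\partial_3 - i(A_n)_3)u_n|^2$ — here $(\partial_3 - i(A_n)_3)u_n = e^{i\int_0^{x_3}(A_n)_3}\partial_3 v_n = e^{i\int_0^{x_3}(A_n)_3}\eps_n b(x')$, so $\eps_n^{-1}(\partial_3 - i(A_n)_3)u_n = b(x') e^{i\int_0^{x_3}(A_n)_3}$, which has modulus $|b(x')|$, giving exactly $\frac12\int_\omega d|b|^2\,dx'$; and the horizontal kinetic term $\frac12\int_\Omega|(\grad' - iA_n')u_n|^2 \to \frac12\int_\Omega|(\grad' - i{A^{\rm ex}}')u|^2\,dx$, which by the algebraic identity \eqref{eq:indepx3}--\eqref{eq:Bv} already established in the previous proposition equals $\frac12\int_\omega d(|(\grad' - i{B^{\rm ex}}')v|^2 + \frac{d^2}{12}|{h^{\rm ex}}'|^2|v|^2)\,dx'$. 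Summing gives the claimed energy.

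**Main obstacle.** The genuinely delicate point is the construction of $B_n\in\HHr$ handling the non-$L^2$ constant field $\eps_n{h^{\rm ex}}'$. One cannot simply invoke Lemma~\ref{lem:GP}(1) on $g=\eps_n{h^{\rm ex}}'$ since that is not in $L^2(\R^3)$; the truncation-and-rate-balancing ($R_n\to\infty$ but $\eps_n R_n^{3/2}\to 0$) must be carried out carefully so that simultaneously $B_n\wto 0$ in $\HHr$, the curl of $B_n$ matches $\eps_n{h^{\rm ex}}'$ on a region exhausting $\R^3$, and the leftover magnetic energy $\eps_n^{-2}\|h_n' - \eps_n{h^{\rm ex}}'\|_{L^2(\R^3)}^2 + \|h_{3,n} - h_3^{\rm ex}\|_{L^2(\R^3)}^2\to 0$. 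The second subtlety is purely bookkeeping: the diagonalization that upgrades the smooth-data construction to general $(v,b)\in\mathcal V_0$ must be arranged so that all five convergences survive simultaneously, which is standard (Attouch's diagonalization lemma) but needs the energy to depend continuously on $(v,b)$ in the relevant norms — and it does, since the limiting functional is continuous on $H^1(\omega;\C)\times L^2(\omega;\C)$.
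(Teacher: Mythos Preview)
Your recovery sequence for $v_n$ and the term-by-term energy computation are essentially the paper's construction: the paper also takes $v_n(x) = v(x') + \eps_n b(x') x_3$ (your $\chi$ shifted by $f$ makes no difference) and $u_n = v_n e^{iA^{\rm ex}_3 x_3}$, then reuses the algebraic identity \eqref{eq:Bv} to evaluate the horizontal kinetic term. Your inclusion of a density/diagonalization step for $(v,b)\in C^\infty(\overline\omega)$ is a welcome clarification that the paper glosses over (without it, $\eps_n b(x')x_3$ need not lie in $H^1(\Omega)$).

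However, your treatment of the vector potential contains a genuine error. You abandon the choice $A_n = A^{\rm ex}$ on the grounds that the magnetic term
\[
\frac{1}{\eps_n^2}\int_{\R^3}\bigl|h_n' - \eps_n{h^{\rm ex}}'\bigr|^2\,dx
\]
would diverge. But in the critical regime the paper has normalized $\eps_n\rho_{\eps_n}=1$ (see the first paragraph of Section~\ref{critsec}), so the magnetic term is actually
\[
\frac{1}{\eps_n^2}\int_{\R^3}\bigl|h_n' - {h^{\rm ex}}'\bigr|^2\,dx + \int_{\R^3}|h_{n,3}-h^{\rm ex}_3|^2\,dx,
\]
and with $A_n=A^{\rm ex}$ one has $h_n=h^{\rm ex}$ identically, so this term is \emph{zero}. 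This is exactly what the paper does: it simply sets $A_n\equiv A^{\rm ex}$ and the magnetic energy vanishes.

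Your alternative construction---a truncated $B_n\in\HHr$ with $(\nabla\times B_n)'$ cancelling $\eps_n{h^{\rm ex}}'$ on $B_{R_n}$---is not only unnecessary, it cannot work even on its own terms: if you insist on reading the coefficient as $\eps_n$ rather than $1$, then $h_n'-\eps_n{h^{\rm ex}}' = (1-\eps_n){h^{\rm ex}}' + (\nabla\times B_n)'$, and since $(1-\eps_n){h^{\rm ex}}'$ is a nonzero constant on all of $\R^3$, no $L^2$ perturbation $(\nabla\times B_n)'$ can make $\eps_n^{-2}\|h_n'-\eps_n{h^{\rm ex}}'\|_{L^2(\R^3)}^2$ finite, regardless of how $R_n$ is tuned. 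So the ``main obstacle'' you identify is a phantom arising from a misread scaling, and the workaround you propose would fail. Once you restore $\eps_n\rho_{\eps_n}=1$, take $A_n=A^{\rm ex}$ and your proof collapses to the paper's.
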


\begin{proof}[]

Define
$$
A_n(x) := A^{\rm ex}(x), 
$$
and
\begin{equation}\label{eq:def_un}
u_n(x) 
	= e^{i A^{\rm ex}_3(x') x_3} \bigl( v(x')+\eps_n b(x') x_3 \bigr).
\end{equation}


We prove first that the convergences in the proposition hold.
Note that
$$ 
|u_n - u| 
	= \left | \eps_n e^{i A^{\rm ex}_3(x')x_3} b(x')x_3 \right|
	= \eps_n  \left| b(x') x_3\right|
$$ 
so that
$$
\|u_n - u\|_{L^2(\Omega;\C)}^2 
	\leq \eps_n \|b\|_{L^2(\omega;\C)}^2 \to 0.
$$

Since $\{\grad u_n\}$ is bounded in $L^2(\Omega;\C^3)$, we know that
$$
u_n \wto u \text{ in } H^1(\Omega;\C).
$$

The other convergences are trivial, since $b_n \equiv b$ and $A_n \equiv A^{\rm ex}$.
Moreover,
$$
I_{\eps_n}(u_n,A_n)
	= \frac12 \int_{\Omega} \left( | (\grad'-iA_n)u_n|^2 + |b|^2 + \frac{\kappa^2}{2} \bigl( 1 - |u_n|^2 \bigr)^2 \right) \, dx.
$$

By expanding the last term above using \eqref{eq:def_un}, we have that
$$
\lim_{n\to \infty} \frac12 \int_{\Omega} \bigl(1-|u_n|^2\bigr)^2 \, dx 
	=\frac12 \int_{\omega} d(x') \bigl(1-|v|^2\bigr)^2 \, dx'.
$$

As for the remaining term, we follow an analogous reasoning as in \eqref{eq:Bv} to deduce
$$
\int_{\Omega} \bigl| (\grad'-iA_n')u_n\bigr|^2 \, dx
	 = \int_{\omega} d(x') \left( \bigl| \bigl(\grad'-i{B^{\rm ex}}'\bigr)v\bigr|^2 + \frac{d^2(x')}{12} |{h^{\rm ex}}'|^2 |v|^2 \right) \, dx' +O(\eps_n).
$$

We conclude that
$$
\lim_{n \to \infty} I_{\eps_n}(u_n,A_n)
	= \frac12 \int_{\omega} d(x') \left( \bigr| \bigl(\grad'-i{B^{\rm ex}}'\bigr)v\bigr|^2 + \left| b \right|^2+ \frac{d^2(x')}{12} |{h^{\rm ex}}'|^2 |v|^2 +  \frac{\kappa^2}{2} \bigl( 1 - |v|^2 \bigr)^2 \right) \, dx'.
$$
This completes the proof of Proposition~\ref{ub:critical}, and with it Theorem~\ref{thm:critical}.
\end{proof}


\section{Subcritical Case}
\label{subsec}

This case, when $\eps \rho_{\eps} \to 0$, is itself split into two subcases, when $\rho_{\eps} \to \rho\in [0,\infty)$ and $\rho_{\eps} \to \infty$.
We recall the definition of $A^{\rm ex}_\eps$ from \eqref{Aex}; note that 
in this regime $A^{\rm ex}_\eps\to A^{\rm ex}_\perp$ (in $H^1_{loc}$) with limiting potential
$$  A^{\rm ex}_\perp := \frac12 h^{ex}_3 \vec{e}_3\times (x_1,x_2,x_3) =
 h^3_{ex}  \left( -{x_2\over 2}, {x_1\over 2},0\right).  $$

To capture the Cosserat vectors in the limit we must have some control on the order of $\eps$ at which the vector fields are converging or diverging.  
We thus define the space
\begin{equation}\label{eq:V_subcrit_0}
\mathcal{V}_-:=  H^{1}(\omega; \C) \times L^2(\omega;\C) \times L^2(\R^3;\R^2).
\end{equation}

We consider sequences $\eps_n\to 0$, and write $\rho_n=\rho_{\eps_n}$ 
and $A^{\rm ex}_n=A^{\rm ex}_{\eps_n}$ throughout.

\begin{theorem}\label{thm:subcrit_fin}
Let $\eps_n \to 0^+$ and $\rho_n \to \rho \in [0, \infty)$ be arbitrary sequences. Also set $\ds  c'_n: =\frac{1}{\eps_n}(\grad \times A_n)'$.
Then
\begin{enumerate}
\item[(i)] for any sequence $\{(u_n, A_n-A^{ex}_n)\} \subset H^1(\Omega;\C) \times \Hdiv(\R^3;\R^3)$ such that 
$$
\sup_{n\in\NN} I_{\eps_n}(u_n,A_n) < \infty,
$$
there exists a subsequence (not relabeled) $\{(u_n, A_n-A^{ex}_n)\}$ and $(u,b,c' - \rho {h^{\rm ex}}') \in \mathcal{V}_-$ such that
\begin{align}
& u_n \wto u  \text{ in } H^{1}(\Omega;\C), \label{eq:sub_comp_1}\\
& A_n-A^{\rm ex}_n \wto 0  \text{ in } \HHr, \\
& b_n  \wto b \text{ in }L^2(\omega;\C), \\
& c_n'-\rho_n h^{ex}{}' \wto  c' - \rho h^{ex}{}' \text{ in }L^2(\R^3;\R^2).\label{eq:sub_comp_2}
\end{align}

\item[(ii)] for any sequence $\{(u_n, A_n-A^{ex}_n)\} \subset H^1(\Omega;\C) \times L^2(\omega;\C) \times \Hdiv(\R^3;\R^3)$ satisfying \eqref{eq:sub_comp_1}--\eqref{eq:sub_comp_2} for some $(u,b,c' - \rho {h^{\rm ex}}') \in \mathcal{V}_-$, the $\Gamma$--limit of $I_{\eps_n}(u_n,A_n)$ is
$$
I_{\kappa,-}^{\rho}(u,b,c') := \frac12 \int_{\omega} d(x') \left( \bigl|\bigl(\grad' -i{A^{\rm ex}_{\perp}}'\bigr) u\bigr|^2 + |b|^2
	+ \frac{\kappa^2}{2} \bigl( 1 - |u|^2 \bigr)^2  \right) \, dx' + \frac12 \int_{\R^3} |c' - \rho {h^{\rm ex}}'|^2 \, dx.
$$
\end{enumerate}
\end{theorem}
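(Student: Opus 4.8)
The plan is to reproduce the three-part structure of the critical case (Theorem~\ref{thm:critical}): a compactness argument for part~(i), and, for part~(ii), matching $\Gamma$-$\liminf$ and $\Gamma$-$\limsup$ inequalities. The only genuinely new feature is the appearance of the in-plane magnetic ``Cosserat vector'' $c'$, which must be produced in the recovery sequence. \textbf{Compactness, part (i).} Write $B_n:=A_n-A^{\rm ex}_n\in\HHr$, so $\grad\times B_n=(h_n'-\eps_n\rho_n{h^{\rm ex}}',\,h_{3,n}-h^{\rm ex}_3)$. The energy bound forces $\|(\grad\times B_n)'\|_{L^2}=O(\eps_n)$ and $\|(\grad\times B_n)_3\|_{L^2}$ bounded, so $\grad\times B_n$ is bounded in $L^2(\R^3;\R^3)$; by Lemma~\ref{lem:GP}(2) so is $B_n$ in $\HHr$ and in $L^6$, and we extract $B_n\wto\tilde A$. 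Its curl has vanishing in-plane part (a strong limit of an $O(\eps_n)$ sequence), so $\grad\times\tilde A=(0,0,g_3)$ with $g_3\in L^2(\R^3)$; but $\div(\grad\times\tilde A)=0$ gives $\partial_3 g_3=0$, and an $x_3$-independent function in $L^2(\R^3)$ must vanish, whence $\grad\times\tilde A=0$ and $\tilde A=0$ by the uniqueness in Lemma~\ref{lem:GP}(1). Next $u_n$ is bounded in $L^4(\Omega)$ by the potential term, and since $(A^{\rm ex}_n)'$ is bounded in $L^\infty(\Omega)$ while $B_n$ is bounded in $L^6(\R^3)$, the gauge terms $A_n'u_n,(A_n)_3u_n$ are bounded in $L^2(\Omega)$; hence $u_n$ is bounded in $H^1(\Omega)$ and $u_n\wto u$. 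Because $(A^{\rm ex}_n)_3\to 0$ uniformly on $\Omega$ (as $\eps_n\rho_n\to 0$) and $(B_n)_3\wto 0$ in $L^6$ while $u_n\to u$ in $L^3(\Omega)$ by Rellich, $(A_n)_3u_n\wto 0$; together with $(\partial_3-i(A_n)_3)u_n\to 0$ in $L^2$ this gives $\partial_3 u=0$, i.e.\ $u\in H^1(\omega;\C)$. Finally, with $v_n:=u_ne^{-i\int_0^{x_3}(A_n)_3(x',t)\,dt}$ and $b_n:=\frac{1}{d(x')\eps_n}\int_{f}^{g}\partial_3 v_n\,dx_3$, the identity $|(\partial_3-i(A_n)_3)u_n|=|\partial_3 v_n|$ and the energy bound make $\eps_n^{-1}\partial_3 v_n$ bounded in $L^2(\Omega)$, hence $b_n$ bounded in $L^2(\omega)$ by Jensen; and $c_n'-\rho_n{h^{\rm ex}}'=\eps_n^{-1}(\grad\times B_n)'$ is bounded in $L^2(\R^3;\R^2)$. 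Extracting the remaining weak limits gives (i).

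\textbf{$\Gamma$-$\liminf$, part (ii).} The energy is a sum of five nonnegative terms, so it suffices to bound each from below. The potential term converges to $\frac{\kappa^2}{4}\int_\omega d(x')(1-|u|^2)^2\,dx'$ by strong $L^4(\Omega)$ convergence of $u_n$ (Rellich) and $u=u(x')$. For the vertical-derivative term, $\int_\Omega\eps_n^{-2}|(\partial_3-i(A_n)_3)u_n|^2=\int_\Omega\eps_n^{-2}|\partial_3 v_n|^2\geq\int_\omega d(x')|b_n|^2$ by Jensen, and weak lower semicontinuity gives $\liminf\geq\int_\omega d(x')|b|^2$. For the horizontal-derivative term, expand $|(\grad'-iA_n')u_n|^2=|\grad'u_n|^2+|A_n'|^2|u_n|^2+2\Re(A_n'\ol u_n\cdot\grad'u_n)$ and use $\grad'u_n\wto\grad'u$ in $L^2$ (weak l.s.c.\ on the first piece), together with $A_n'\to{A^{\rm ex}_\perp}'$ in $L^4(\Omega)$ (from $(A^{\rm ex}_n)'\to{A^{\rm ex}_\perp}'$ uniformly and $(B_n)'\to 0$ in $L^4(\Omega)$ by Rellich) and $u_n\to u$ in $L^4(\Omega)$, to get $\liminf\int_\Omega|(\grad'-iA_n')u_n|^2\geq\int_\omega d(x')\bigl|(\grad'-i{A^{\rm ex}_\perp}')u\bigr|^2$; note that here, unlike the critical case, \emph{no recompletion of squares} occurs, because ${A^{\rm ex}_\perp}'$ carries no $x_3$-dependence. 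The vertical-field term is $\geq 0$, and the horizontal-field term equals $\frac12\int_{\R^3}|c_n'-\rho_n{h^{\rm ex}}'|^2$ with $\liminf\geq\frac12\int_{\R^3}|c'-\rho{h^{\rm ex}}'|^2$ by weak l.s.c. Summing yields $\liminf_n I_{\eps_n}(u_n,A_n)\geq I_{\kappa,-}^{\rho}(u,b,c')$.

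\textbf{$\Gamma$-$\limsup$, part (ii) — the construction.} Set $A_n:=A^{\rm ex}_n+B_n$ and $u_n(x):=e^{i\int_0^{x_3}(A_n)_3(x',t)\,dt}\bigl(u(x')+\eps_n b(x')x_3\bigr)$, so that $v_n=u(x')+\eps_n b(x')x_3$ and $b_n\equiv b$. By a standard density-and-diagonalization argument, using the continuity of $I_{\kappa,-}^{\rho}$ with respect to $H^1(\omega)\times L^2(\omega)\times L^2(\R^3;\R^2)$, it is enough to treat $b\in C_0^\infty(\omega)$; then $\{\grad u_n\}$ is bounded in $L^2(\Omega)$ and $u_n\to u$ in $L^4(\Omega)$ (since $\int_0^{x_3}(A_n)_3\to 0$ in $L^6(\Omega)$, using $\eps_n\rho_n\to 0$ and $\|B_n\|_{L^6}\to 0$), whence $u_n\wto u$ in $H^1(\Omega)$. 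The energy then converges termwise to the claimed limit: the potential and horizontal-gradient terms pass to $\frac{\kappa^2}{4}\int_\omega d(x')(1-|u|^2)^2\,dx'$ and $\frac12\int_\omega d(x')|(\grad'-i{A^{\rm ex}_\perp}')u|^2\,dx'$ exactly as in Proposition~\ref{ub:critical}, but \emph{without} the square-completion of \eqref{eq:Bv}, since the limiting in-plane potential ${A^{\rm ex}_\perp}'$ is $x_3$-independent (the $O(\eps_n)$ and $o(1)$ corrections coming from $(A^{\rm ex}_n)'-{A^{\rm ex}_\perp}'$ and $(B_n)'$ drop out); the vertical-gradient term is exactly $\frac12\int_\omega d(x')|b|^2\,dx'$; the vertical-field term is $\frac12\|(\grad\times B_n)_3\|_{L^2}^2$; and the horizontal-field term is $\frac12\|\eps_n^{-1}(\grad\times B_n)'\|_{L^2}^2$. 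Thus it remains only to choose $B_n\in\HHr$ with $\eps_n^{-1}(\grad\times B_n)'\to c'-\rho{h^{\rm ex}}'$ strongly in $L^2(\R^3;\R^2)$ and $\|(\grad\times B_n)_3\|_{L^2}\to 0$; the latter two then give $\grad\times B_n\to 0$ in $L^2$ and hence $B_n\to 0$ in $\HHr$ by Lemma~\ref{lem:GP}(2).

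\textbf{The main obstacle and its resolution.} The crux is that a prescribed $w:=c'-\rho{h^{\rm ex}}'\in L^2(\R^3;\R^2)$ need not be the in-plane part of \emph{any} divergence-free $L^2(\R^3;\R^3)$ field. However, the in-plane parts of divergence-free $L^2$ fields form a \emph{dense} linear subspace $S\subset L^2(\R^3;\R^2)$: by the Helmholtz decomposition, $\psi\in S^\perp$ forces $(\psi,0)=\grad p$ with $\partial_3 p=0$, i.e.\ $\psi=\grad' p$ with $p=p(x')$, and such a gradient lies in $L^2(\R^3;\R^2)$ only if it vanishes — this is what Section~\ref{SS4.3} establishes. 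Hence (the weak closure of the subspace $S$ being its strong closure) there is a sequence of divergence-free $G_k\in L^2(\R^3;\R^3)$ with $G_k'\to w$ strongly in $L^2(\R^3;\R^2)$, although $\|(G_k)_3\|_{L^2}$ may grow without bound. Define $B_n$ by $\grad\times B_n:=\eps_n G_{k(n)}$ (uniquely determined in $\HHr$ via Lemma~\ref{lem:GP}(1)), where $k(n)\to\infty$ is chosen by a diagonal argument slowly enough that $\eps_n\|(G_{k(n)})_3\|_{L^2}\to 0$. Then $c_n'-\rho_n{h^{\rm ex}}'=G_{k(n)}'\to w$ strongly, $\|(\grad\times B_n)_3\|_{L^2}=\eps_n\|(G_{k(n)})_3\|_{L^2}\to 0$, and $\|\grad\times B_n\|_{L^2}\to 0$, which completes the recovery sequence and, with it, Theorem~\ref{thm:subcrit_fin}. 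I expect this dense-subspace / $\eps_n$-rescaling / diagonalization step to be the only nonroutine point; everything else parallels the critical case and is in fact simpler, because the in-plane external potential $\eps_n\rho_n{h^{\rm ex}}'$ vanishes in the limit.
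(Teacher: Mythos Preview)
Your proof is correct and, for the compactness and $\Gamma$-$\liminf$ parts, follows the paper's argument essentially line by line. The one substantive difference is in the recovery-sequence construction for the Cosserat vector $c'$. The paper (Lemma~\ref{lem:cosserat}) builds the approximating divergence-free fields explicitly: mollify a characteristic function, extend to a third component by integrating in $x_3$, truncate with a smooth cutoff $\chi_\eta(x_3)$, subtract a gradient to restore $\div=0$, and then diagonalize over the mollification and cutoff parameters before passing to simple functions and general $L^2$ data. You instead argue abstractly that the in-plane projections of $L^2_{\div}(\R^3;\R^3)$ fields form a dense subspace of $L^2(\R^3;\R^2)$, via Helmholtz and the observation that an $x_3$-independent $L^2(\R^3)$ function must vanish; this is cleaner and yields the same conclusion with the same $\eps_n$-diagonalization to suppress the possibly unbounded third component. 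Two small remarks: your sentence ``this is what Section~\ref{SS4.3} establishes'' is a bit misleading, since the paper never argues by orthogonality and never states density as such --- what you have is a genuinely different (shorter) proof of the approximation lemma; and your reduction to $b\in C_0^\infty(\omega)$ by density is a welcome addition, since without it the test function $u+\eps_n b\,x_3$ need not lie in $H^1(\Omega)$, a point on which the paper is tacit.
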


\begin{theorem}\label{thm:subcrit_inf}
Let $\eps_n \to 0^+$, $\limsup \rho_n = \infty$, and set $\ds  c'_n: =\frac{1}{\eps_n\rho_n}(\grad \times A_n)'$.
Then
\begin{enumerate}
\item[(i)] for any sequence $\{(u_n, A_n-A^{ex}_n)\} \subset H^1(\Omega;\C) \times \Hdiv(\R^3;\R^3)$ such that 
$$
\sup_{n\in\NN} I_{\eps_n}(u_n,A_n) < \infty,
$$
there exists a subsequence (not relabeled) $\{(u_n, A_n-A^{ex}_n)\}$ and $(u,b) \in \mathcal{V}_0$ such that
\begin{align}
& u_n \wto u  \text{ in } H^{1}(\Omega;\C), \label{eq:sub2_comp_1}\\
& A_n-A^{\rm ex}_n \wto 0  \text{ in } \HHr, \\
& b_n  \wto b \text{ in }L^2(\omega;\C), \\
& c_n' -  h^{ex}{}' \wto  0 \text{ in }L^2(\R^3;\R^2).\label{eq:sub2_comp_2}
\end{align}

\item[(ii)] for any sequence $\{(u_n, A_n-A^{ex}_n)\} \subset H^1(\Omega;\C) \times \Hdiv(\R^3;\R^3)$ satisfying \eqref{eq:sub2_comp_1}--\eqref{eq:sub2_comp_2} for some $(u,b) \in \mathcal{\nu}_0$, the $\Gamma$--limit of $I_{\eps_n}(u_n,A_n)$ is
$$
I_{\kappa,-}^{\infty}(u,b,c') := \frac12 \int_{\omega} d(x') \left( \bigl|\bigl(\grad' -i{A^{\rm ex}_{\perp}}'\bigr) u\bigr|^2 + |b|^2
	+ \frac{\kappa^2}{2} \bigl( 1 - |u|^2 \bigr)^2  \right) \, dx'.
$$
\end{enumerate}
\end{theorem}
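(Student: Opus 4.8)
The plan is to run the three-step scheme used for Theorems~\ref{thm:critical} and~\ref{thm:subcrit_fin} --- compactness of bounded-energy sequences, a $\Gamma$-$\liminf$ inequality, and a matching recovery sequence --- observing that the regime $\eps_n\rho_n\to0$, $\rho_n\to\infty$ is essentially a degeneration of the finite subcritical case in which the rescaled parallel field becomes rigidly determined. After passing to a subsequence (using $\limsup\rho_n=\infty$) we may assume $\rho_n\to\infty$. Since $\eps_n\rho_n\to0$, the external potentials from \eqref{Aex} satisfy $A^{\rm ex}_n\to A^{\rm ex}_{\perp}$ in $H^1_{loc}(\R^3)$ and uniformly on the bounded set $\Omega$, and this limit --- rather than the $\eps$-independent $A^{\rm ex}$ of the critical case --- plays the role of the fixed vector potential; in particular its parallel component vanishes. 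As in Theorem~\ref{thm:critical} we work with $v_n:=u_ne^{-i\int_0^{x_3}(A_n)_3(x',t)\,dt}$ and $b_n:=\frac1{\eps_n d(x')}\int_{f(x')}^{g(x')}\partial_3 v_n\,dx_3$.

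\emph{Compactness (part (i)).} Let $K:=\sup_n I_{\eps_n}(u_n,A_n)<\infty$. From the magnetic part of \eqref{GL} and $\grad\times A^{\rm ex}_n=(\eps_n\rho_n{h^{\rm ex}}',h^{\rm ex}_3)$ we read off $\|\bigl(\grad\times(A_n-A^{\rm ex}_n)\bigr)'\|_{L^2}\le C\eps_n$ and $\|\grad\times(A_n-A^{\rm ex}_n)\|_{L^2}\le C$; Lemma~\ref{lem:GP} then bounds $A_n-A^{\rm ex}_n$ in $\HH$ and in $L^6$, so along a subsequence $A_n-A^{\rm ex}_n\wto A$ in $\HHr$. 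The horizontal part of $\grad\times(A_n-A^{\rm ex}_n)$ converges to $0$ strongly and the vertical part weakly to some $w\in L^2$, so $\grad\times A=(0,0,w)$; since a curl is divergence free, $\partial_3 w=0$, hence $w\equiv0$, and the uniqueness in Lemma~\ref{lem:GP} gives $A=0$ --- the same argument as in the critical case, the horizontal control here being automatic from $\eps_n\rho_n\to0$ rather than enforced by a penalization. For the order parameter, the $L^4(\Omega)$-bound on $u_n$ together with $A_n\to A^{\rm ex}_{\perp}$ in $L^4(\Omega)$ bounds $A_n'u_n$ and $(A_n)_3u_n$ in $L^2$; combined with the energy bound on $\frac1{\eps_n}(\partial_3-i(A_n)_3)u_n$ this shows $\grad u_n$ is bounded in $L^2$ and $(\partial_3-i(A_n)_3)u_n\to0$ in $L^2$, so $u_n\wto u$ in $H^1(\Omega;\C)$ with $\partial_3 u=0$, i.e. $u\in H^1(\omega;\C)$. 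By Fubini and H\"older $b_n$ is bounded in $L^2(\omega;\C)$, so $b_n\wto b$ after a further subsequence. Finally $c_n'-{h^{\rm ex}}'=\frac1{\eps_n\rho_n}\bigl(\grad\times(A_n-A^{\rm ex}_n)\bigr)'$ has $L^2$-norm $\le C/\rho_n\to0$, so $c_n'\to{h^{\rm ex}}'$ strongly. This is the only structural change from Theorem~\ref{thm:subcrit_fin}: the rescaled parallel field is now determined in the limit and contributes nothing, which is why $I_{\kappa,-}^{\infty}$ has no magnetic term (the argument $c'$ in the statement is vestigial).

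\emph{Lower bound (part (ii)).} Split $I_{\eps_n}$ into the five terms of \eqref{GL}; the two magnetic terms are nonnegative and discarded. For the horizontal kinetic term, expand $|(\grad'-iA_n')u_n|^2=|\grad'u_n|^2+|A_n'u_n|^2+2\,\Re(A_n'\ol{u}_n\cdot\grad'u_n)$ and argue exactly as in the critical $\liminf$: $\grad'u_n\wto\grad'u$ in $L^2$ (lower semicontinuity), $A_n'u_n\to{A^{\rm ex}_{\perp}}'u$ and $A_n'\ol{u}_n\to{A^{\rm ex}_{\perp}}'\ol{u}$ strongly in $L^2(\Omega)$, $u_n\to u$ in $L^4(\Omega)$ by Rellich, to obtain $\liminf\ge\frac12\int_\Omega|(\grad'-i{A^{\rm ex}_{\perp}}')u|^2\,dx=\frac12\int_\omega d(x')|(\grad'-i{A^{\rm ex}_{\perp}}')u|^2\,dx'$; crucially $A^{\rm ex}_{\perp}$ carries no parallel component, so the integrand is independent of $x_3$ and --- unlike in the critical case --- neither the replacement of ${A^{\rm ex}_{\perp}}'$ by $B^{\rm ex}$ nor the extra potential $\frac{d^2}{12}|{h^{\rm ex}}'|^2|v|^2$ appears. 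The vertical kinetic term is bounded below by $\frac12\int_\omega d(x')|b_n|^2$ via Fubini and Cauchy--Schwarz, hence by $\frac12\int_\omega d(x')|b|^2$ by weak lower semicontinuity and Fatou, and the potential term converges to $\frac{\kappa^2}4\int_\omega d(x')(1-|u|^2)^2$ by Rellich. Adding up yields $\liminf I_{\eps_n}(u_n,A_n)\ge I_{\kappa,-}^{\infty}(u,b)$.

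\emph{Upper bound (part (ii)).} Given $(u,b)\in\mathcal{V}_0$, take the analogue of the critical construction built on $A^{\rm ex}_n$: set $A_n:=A^{\rm ex}_n$ (so $A_n-A^{\rm ex}_n\equiv0$, $c_n'\equiv{h^{\rm ex}}'$, and both magnetic terms vanish identically), $v_n(x):=u(x')+\eps_n b(x')x_3$, and $u_n:=v_n\,e^{i\int_0^{x_3}(A^{\rm ex}_n)_3(x',t)\,dt}$. Then $b_n\equiv b$ by a direct computation, $v_n\to u$ in $L^2$, and the phase tends to $1$ uniformly on $\Omega$ because $(A^{\rm ex}_n)_3=O(\eps_n\rho_n)$, so $u_n\wto u$ and $v_n\wto u$ in $H^1(\Omega;\C)$ and all of \eqref{eq:sub2_comp_1}--\eqref{eq:sub2_comp_2} hold. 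Since $|(\partial_3-i(A_n)_3)u_n|=|\partial_3 v_n|=\eps_n|b|$ and $(A^{\rm ex}_n)'\to{A^{\rm ex}_{\perp}}'$ uniformly, a computation parallel to \eqref{eq:Bv} (but with no $x_3$-dependent terms) shows the horizontal kinetic term tends to $\frac12\int_\omega d(x')|(\grad'-i{A^{\rm ex}_{\perp}}')u|^2\,dx'$, the vertical term to $\frac12\int_\omega d(x')|b|^2\,dx'$, the potential term to $\frac{\kappa^2}4\int_\omega d(x')(1-|u|^2)^2\,dx'$, and the magnetic terms to $0$, so $\lim I_{\eps_n}(u_n,A_n)=I_{\kappa,-}^{\infty}(u,b)$; with the lower bound this proves part (ii). The argument poses no real obstacle beyond this bookkeeping --- it is strictly easier than Theorems~\ref{thm:critical} and~\ref{thm:subcrit_fin} --- the one point needing care being the compactness step, where the divergence-free structure of $\grad\times A$ must be combined with the weak control on its vertical component to force $A=0$, together with the strong convergence of $c_n'$ that makes it drop out of the limiting energy.
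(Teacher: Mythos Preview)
Your proof is correct and follows essentially the same approach as the paper: the compactness argument via Lemma~\ref{lem:GP} and the divergence-free structure of the curl, the $\liminf$ via expanding the covariant gradient and weak lower semicontinuity, and the recovery sequence $A_n=A^{\rm ex}_n$, $u_n=(u+\eps_n b\,x_3)\,e^{i\int_0^{x_3}(A^{\rm ex}_n)_3\,dt}$ all coincide with the paper's treatment (noting $A^{\rm ex}_n=A^{\rm ex}_\perp+\eps_n\rho_n A^{\rm ex}_\|$). The only cosmetic difference is that you conclude $\partial_3 u=0$ directly from $(\partial_3-i(A_n)_3)u_n\to0$ and $(A_n)_3\to0$, whereas the paper routes this through the auxiliary $v_n$ and then observes $u=v$ since $(A^{\rm ex}_\perp)_3=0$.
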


\begin{corollary}

Theorems \ref{thm:subcrit_fin} and \ref{thm:subcrit_inf}, imply that the Ginzburg-Landau model in $3D$
$$
\min_{\substack{u \in H^1(\Omega;\C) \\ A \in H^1_{\div}(\R^3;\R^3)}} I_{\eps}(u,A)
$$
converges, in the thin-film limit, to the model
$$
\min_{u \in H^1(\omega;\C)} \frac12 \int_{\omega} d(x')\left( \bigl|\bigl(\grad' -i{A^{\rm ex}_{\perp}}'\bigr)u\bigr|^2 + \frac{\kappa^2}{2} \bigl( 1 - |u|^2 \bigr)^2 \right)  \, dx',
$$
where we let $b\equiv 0$ in $\omega$ and $c'\equiv {h^{\rm ex}}'$ in $\R^3$.
\end{corollary}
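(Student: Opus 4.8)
The plan is to derive the Corollary from Theorems~\ref{thm:subcrit_fin} and~\ref{thm:subcrit_inf} by the fundamental theorem of $\Gamma$-convergence, so that essentially no new analysis of the three-dimensional functional is needed; the only genuine computation left is the minimization of the limiting functionals.  I would first recall that for each fixed $\eps>0$ the functional $I_\eps$ attains its minimum on $H^1(\Omega;\C)\times(A^{\rm ex}_\eps+\HHr)$ by the direct method (cf. \cite{GP}), so the quantities $\min I_{\eps}$ make sense.  Then, fixing a sequence $\eps_n\to 0^+$, part~(i) of the relevant theorem provides the equi-coercivity of $\{I_{\eps_n}\}$ with respect to the convergences \eqref{eq:sub_comp_1}--\eqref{eq:sub_comp_2} (resp. \eqref{eq:sub2_comp_1}--\eqref{eq:sub2_comp_2}), while part~(ii) provides the $\Gamma$-$\liminf$ inequality along those convergences together with a recovery sequence for each admissible limit.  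Applying the recovery-sequence construction to an arbitrary configuration in $\mathcal{V}_-$ and then optimizing gives $\limsup_n\min I_{\eps_n}\le\inf_{\mathcal{V}_-}I_{\kappa,-}^{\rho}$ (resp. $\le\inf_{\mathcal{V}_0}I_{\kappa,-}^{\infty}$), while applying the compactness of part~(i) to a sequence of minimizers and then the $\Gamma$-$\liminf$ inequality gives $\liminf_n\min I_{\eps_n}\ge\inf I_{\kappa,-}^{\rho}$ (resp. $\ge\inf I_{\kappa,-}^{\infty}$).  Hence $\min I_{\eps_n}$ converges to the infimum of the limiting functional, which is attained, and every sequence of minimizers has a subsequence converging, in the sense of \eqref{eq:sub_comp_1}--\eqref{eq:sub_comp_2}, to a minimizer of that functional.

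Next I would identify the minimum of the limiting energy.  The key observation is that $I_{\kappa,-}^{\rho}(u,b,c')$ is a sum of three nonnegative contributions depending on the disjoint groups of variables $u$, $b$, and the $L^2$ field $w:=c'-\rho{h^{\rm ex}}'$: namely the two-dimensional Ginzburg--Landau energy
$$ \frac12\int_\omega d(x')\Bigl(\bigl|\bigl(\grad'-i{A^{\rm ex}_{\perp}}'\bigr)u\bigr|^2+\tfrac{\kappa^2}{2}\bigl(1-|u|^2\bigr)^2\Bigr)\,dx', $$
together with $\tfrac12\int_\omega d(x')|b|^2\,dx'$ and $\tfrac12\int_{\R^3}|w|^2\,dx$.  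Therefore the infimum of $I_{\kappa,-}^{\rho}$ over $\mathcal{V}_-$ is the sum of the three separate infima: the last two vanish for $b\equiv 0$ and $w\equiv 0$, i.e. $c'\equiv\rho{h^{\rm ex}}'$, and the first equals the minimum over $H^1(\omega;\C)$ of the reduced two-dimensional functional, attained by the direct method since $\omega$ is bounded and that energy is coercive and weakly lower semicontinuous on $H^1(\omega;\C)$.  The functional $I_{\kappa,-}^{\infty}$ is treated identically, the only difference being that the $c'$-term is absent, so its minimum is again the minimum of the reduced two-dimensional functional, realized with $b\equiv 0$.  Combining with the first paragraph gives the asserted convergence of $\min I_{\eps_n}$ to $\min_{u\in H^1(\omega;\C)}$ of the reduced energy, realized in the limit with $b\equiv 0$ and $c'\equiv\rho{h^{\rm ex}}'$.

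I do not expect a real obstacle here, since this is a corollary of the two $\Gamma$-limit theorems; the single point that deserves attention is the \emph{admissibility}, in the limit problem attached to $I_{\kappa,-}^{\rho}$, of the optimal field $c'\equiv\rho{h^{\rm ex}}'$ (equivalently $w\equiv 0$).  The admissible limits $c'$ in Theorem~\ref{thm:subcrit_fin} arise as weak $L^2$ limits of the rescaled curls $\eps_n^{-1}(\grad\times A_n)'$ of divergence-free vector potentials, so one must know that every such $c'$ can be realized in the limit; this is exactly the content of the construction of Section~\ref{SS4.3}, which realizes any $(v_1,v_2)\in L^2(\R^3;\R^2)$ as a weak limit of divergence-free fields with controlled third component.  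Once this is invoked the proof is complete, the remaining ingredients being only the weak lower semicontinuity and coercivity of the two-dimensional Ginzburg--Landau energy on the bounded domain $\omega$ and the standard machinery of $\Gamma$-convergence.
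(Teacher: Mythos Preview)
Your argument is correct and is exactly the standard route from a $\Gamma$-convergence statement with compactness to convergence of minima and minimizers.  The paper itself gives \emph{no} proof of this Corollary: it is simply stated as an immediate consequence of Theorems~\ref{thm:subcrit_fin} and~\ref{thm:subcrit_inf}, with the phrase ``where we let $b\equiv 0$ in $\omega$ and $c'\equiv {h^{\rm ex}}'$ in $\R^3$'' indicating how the auxiliary variables are optimized out.  What you have written is the natural way to make that implication explicit, and your invocation of the recovery construction of Section~\ref{SS4.3} to justify admissibility of the optimal $c'$ is appropriate (indeed, in the actual upper-bound proof the recovery sequence is built precisely from Lemma~\ref{lem:cosserat} applied to $w'=c'-\rho{h^{\rm ex}}'$, so $w'=0$ is trivially admissible).

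One small observation: you correctly obtain $c'\equiv\rho\,{h^{\rm ex}}'$ as the optimal Cosserat field in the case $\rho_n\to\rho<\infty$, whereas the Corollary as stated writes $c'\equiv{h^{\rm ex}}'$.  The latter matches the normalization in Theorem~\ref{thm:subcrit_inf} (where $c'_n-{h^{\rm ex}}'\wto 0$), so the Corollary's phrasing is slightly informal in conflating the two subcases; your version is the precise one for finite~$\rho$.
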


\subsection{Compactness}

\begin{proof}[ of Theorem \ref{thm:subcrit_fin} {\it (i)}]

Let $K := \sup_{n \in \NN} I_{\eps_n}(u_n,A_n) < \infty$.
Then
\begin{equation}\label{enest}
\int_{\R^3} \frac{1}{\eps_n^2} \left| h_n' - \eps_n \rho_n{h^{\rm ex}}'\right|^2 + |h_3 - h^{\rm ex}_3|^2 \, dx \leq K,
\end{equation}
This implies that $\grad \times (A_n-  A^{\rm ex}_n)$ is bounded in $L^2$, and by Lemma~\ref{lem:GP} we conclude that $(A_n-  A^{\rm ex}_n)$ is 
bounded in $\HHr$, and therefore there exists  a subsequence (not relabeled) such that
$$
B_n:= (A_n-  A^{\rm ex}_n) \wto B \quad \text{ in } \HHr.
$$
Then, by weak convergence, $\div B=0$, and by the estimate \eqref{enest} we conclude that 
$
\grad \times B = (0,0,\grad^{\perp}\cdot B).
$
This implies that $\partial_3[ \grad^{\perp}\cdot B] = 0$ in $\mathcal{D}'(\R^3)$. Also, from Fatou's Lemma in \eqref{enest}, we deduce that $\grad^{\perp}\cdot B \in L^2(\R^3)$, thus $\grad^{\perp} \cdot B \equiv 0$.
The uniqueness in Lemma \ref{lem:GP} implies that $B \equiv 0$.

This means that in the thin film limit, the magnetic field is vertical. The Cosserat vector for the magnetic field should give the direction which the magnetic field takes to get vertical in the limit. \\

Since $\rho_n \to \rho \in [0,\infty)$, we have that
%
$$
\int_{\R^3} \left| c_n' - \rho_n{h^{\rm ex}}'\right|^2 \, dx = \int_{\R^3} \left| \frac{1}{\eps_n} h_n' - \rho_n{h^{\rm ex}}'\right|^2 \, dx \leq K,
$$
which implies that we can find a further subsequence (not relabeled) such that
$$
c_n' -\rho_n h^{ex}{}' \rightharpoonup c' -\rho h^{ex}{}'
\quad \text{ in } L^2(\R^3;\R^2).
$$
%


%


On the other hand,
$\{u_n\}_{n \in \NN}$ is bounded in $L^4 (\Omega;\C)$,
and because $\grad A_n$ is bounded in $L^2(\R^3;\R^{3\times 3})$, 
\begin{align*}
& u_n \text{, which is bounded in } L^2(\Omega;\C), \\
& \grad u_n \text{ is bounded in } L^2(\Omega;\C^3),
\end{align*}
so there exists a further subsequence (not relabeled) such that
$$
u_n \wto u \quad \text{ in } H^1(\Omega;\C).
$$

Also, if we define $v_n(x) = u_n(x) e^{-i\int_0^{x_3} {(A_n)}_3(x',t)\,dt}$, then we have
\begin{align*}
& |v_n|=|u_n| \text{, which is bounded in } L^2(\Omega;\C), \\
& \grad v_n \text{ is bounded in } L^2(\Omega;\C^3), \\
& \partial_3 v_n \to 0 \quad \text{ in } L^2(\R^3;\C),
\end{align*}
so we deduce that there is a further subsequence (not relabeled) such that
$$
v_n \wto v \quad \text{ in } H^1(\Omega;\C)
$$
with $\partial_3 v = 0$.
We then have that 
\begin{equation}\label{eq:uv2}
u = v e^{i \int_0^{x_3} {A^{\rm ex}_{\perp}}_3(x',t)dt}= v.
\end{equation}

Recall that $b_n := \frac{1}{\eps_n d(x')}\int_{f(x')}^{g(x')} \partial_3 v_n \,dx_3\in L^2(\omega;\C)$.
Then, we know that $b_n$ is bounded in $L^2(\omega;\C)$, hence there exists a subsequence (not relabeled) and a function $b \in L^2(\omega;\C)$ such that
$$
b_n \wto b \quad \text{ in } L^2(\omega;\C).
$$
\end{proof}

\begin{proof}[ of Theorem \ref{thm:subcrit_inf} {\it (i)}]

The proof of the compactness result for the case when $\limsup \rho_n = \infty$ follows the same proof as in the previous case, so given $K := \sup_{n \in \NN} I_{\eps_n}(u_n,A_n) < \infty$, we can find a subsequence (not relabeled) such that
$$
A_n - A^{\rm ex}_n \wto 0
$$

Since $\limsup \rho_n = \infty$, we know that 
$$
\rho_n^2 \int_{\R^3} \left| c_n - {h^{\rm ex}}'\right|^2 \, dx = \rho_n^2 \int_{\R^3} \left| \frac{1}{\eps_n\rho_n} h_n' - {h^{\rm ex}}'\right|^2 \, dx \leq K,
$$
which implies that we can find a further subsequence (not relabeled) such that
$$
c_n-{h^{\rm ex}}' \rightharpoonup 0 \quad \text{ in } L^2(\R^3;\R^2).
$$

From here, we follow the previous proof without change to obtain a further subsequence (not relabeled) such that
\begin{align*}
& u_n \wto u \quad \text{ in } H^1(\Omega;\C), \\
& v_n \wto u \quad \text{ in } H^1(\Omega;\C), \\
& b_n \wto b \quad \text{ in } L^2(\omega;\C).
\end{align*}
\end{proof}

\subsection{The $\Gamma$-liminf inequality}

\begin{proposition}[$\Gamma-\liminf$ inequality]

\begin{enumerate}
\item[(i)] Let $\bigl(u,b,c' - \rho{h^{\rm ex}}'\bigr) \in \mathcal{V}_-$ and consider sequences $\{\eps_n\} \subset \R$, $u_{n}\in H^{1}(\Omega;\C)$ and $A_n-A^{\rm ex}_n \in \HHr$, $n\in\NN$,  satisfying
\begin{align*}
& \eps_n \to 0^+, \quad \rho_n \to \rho ,  \\
& u_n \wto u \text{ in } H^1(\Omega;\C), \\
& b_n \wto b \text{ in } L^2(\omega;\C), \\
& A_n-A^{\rm ex}_{n} \wto 0 \text{ in } \HHr,  \\
& c_n'-\rho_n h^{ex}{}'  \wto c' -\rho h^{ex}{}'\text{ in }L^2(\R^3;\R^2),
\end{align*}
with $c'_n$ as in Theorem \ref{thm:subcrit_fin}. 
Then
$$
\liminf_{n \to \infty} I_{\eps_n}(u_n,A_n) 
	\geq \frac12 \int_{\omega} d(x') \left( \bigl|\bigl(\grad' -i{A^{\rm ex}_{\perp}}'\bigr) u\bigr|^2 + |b|^2
	+ \frac{\kappa^2}{2} \bigl( 1 - |u|^2 \bigr)^2  \right) \, dx' 
		+ \frac12 \int_{\R^3} |c' - \rho {h^{\rm ex}}'|^2  \, dx.
$$

\item[(ii)] Let $(u,b) \in \mathcal{V}_0$ and consider sequences $\{\eps_n\} \subset \R$, $u_{n}\in H^{1}(\Omega;\C)$ and $A_n-A^{\rm ex}_n \in \HHr$, $n\in\NN$, satisfying
\begin{align*}
& \eps_n \to 0^+, \quad \limsup \rho_n = \infty,  \\
& u_n \wto u \text{ in } H^1(\Omega;\C), \\
& b_n \wto b \text{ in } L^2(\omega;\C), \\
& A_n -A^{\rm ex}_{n}\wto 0 \text{ in } \HHr,  \\
& c_n' -h^{ex}{}'\wto 0 \text{ in }L^2(\R^3;\R^2),
\end{align*}
with $c'_n$ as in \ref{thm:subcrit_inf}.
Then
$$
\liminf_{n \to \infty} I_{\eps_n}(u_n,A_n) 
	\geq \frac12 \int_{\omega} d(x')\left( \bigl|\bigl(\grad' -i{A^{\rm ex}_{\perp}}'\bigr) u\bigr|^2 + |b|^2
	+ \frac{\kappa^2}{2} \bigl( 1 - |u|^2 \bigr)^2  \right) \, dx.
$$

\end{enumerate}
\end{proposition}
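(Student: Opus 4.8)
The plan is to treat both parts (i) and (ii) in parallel, since part (ii) is the special case $\rho=0$ of part (i) after discarding the magnetic deviation term; the only real work is in part (i). First I would record the key consequences of the compactness step: since $A_n - A^{\rm ex}_n \wto 0$ in $\HHr$, Lemma~\ref{lem:GP}(ii) gives $A_n \to A^{\rm ex}_\perp$ strongly in $H^1_{loc}(\R^3;\R^3)$, in particular $(A_n)' \to {A^{\rm ex}_\perp}'$ and $(A_n)_3 \to 0$ strongly in $L^2_{loc}$; and since $u_n \wto u$ in $H^1(\Omega;\C)$, by Rellich $u_n \to u$ strongly in $L^p(\Omega;\C)$ for all $p<6$, hence in $L^4$, and $|u_n|=|v_n| \to |u|$ as well. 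Then I would split $I_{\eps_n}(u_n,A_n)$ into four pieces and bound each from below separately: (a) the double-well term, (b) the vertical gauge-derivative term, (c) the horizontal gauge-derivative term, and (d) the magnetic-field term.

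For (a), strong $L^4$ convergence gives $\int_\Omega \tfrac{\kappa^2}{4}(1-|u_n|^2)^2 \to \int_\Omega \tfrac{\kappa^2}{4}(1-|u|^2)^2 = \int_\omega d(x')\tfrac{\kappa^2}{4}(1-|u|^2)^2\,dx'$, using that $u=v=v(x')$ is independent of $x_3$. For (b), I would use Fubini, Jensen/H\"older on the $x_3$-integral, and then Fatou exactly as in the critical-case liminf proof:
\begin{align*}
\liminf_{n\to\infty}\frac12\int_\Omega \Bigl|\tfrac{1}{\eps_n}(\partial_3 - i(A_n)_3)u_n\Bigr|^2 dx
&\geq \liminf_{n\to\infty}\frac12\int_\omega d(x')\,|b_n|^2\,dx'
\geq \frac12\int_\omega d(x')|b|^2\,dx',
\end{align*}
where the passage from $(\partial_3 - i(A_n)_3)u_n$ to $\partial_3 v_n$ uses the gauge identity $(\partial_3 - i(A_n)_3)u_n = e^{i\int_0^{x_3}(A_n)_3\,dt}\,\partial_3 v_n$ and the fact that $b_n \wto b$ in $L^2(\omega;\C)$. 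For (c), I would expand $|(\grad' - i(A_n)')u_n|^2 = |\grad' u_n|^2 + |(A_n)'|^2|u_n|^2 + 2\Re((A_n)'\bar u_n\cdot\grad' u_n)$; the first term is weakly lower semicontinuous, while the remaining two pass to the limit because $(A_n)'\to{A^{\rm ex}_\perp}'$ in $H^1_{loc}$ (hence in $L^6_{loc}$, and $A^{\rm ex}_\perp$ has at most linear growth so products are controlled on the bounded set $\Omega$) and $u_n\to u$ in $L^4$, $\grad' u_n\wto\grad' u$ in $L^2$. This yields $\liminf \int_\Omega|(\grad'-i(A_n)')u_n|^2 \geq \int_\Omega|(\grad'-i{A^{\rm ex}_\perp}')u|^2 = \int_\omega d(x')|(\grad'-i{A^{\rm ex}_\perp}')u|^2\,dx'$, again using $x_3$-independence of $u$. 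Finally (d): the vertical part $\tfrac12\int_{\R^3}|h_3-h^{\rm ex}_3|^2 \geq 0$ is simply dropped, while for the parallel part I rewrite $\tfrac{1}{\eps_n^2}|h_n' - \eps_n\rho_n{h^{\rm ex}}'|^2 = |c_n' - \rho_n{h^{\rm ex}}'|^2$ by definition of $c_n'$, and since $c_n' - \rho_n{h^{\rm ex}}' \wto c' - \rho{h^{\rm ex}}'$ in $L^2(\R^3;\R^2)$, weak lower semicontinuity of the $L^2$ norm gives $\liminf\tfrac12\int_{\R^3}|c_n'-\rho_n{h^{\rm ex}}'|^2 \geq \tfrac12\int_{\R^3}|c'-\rho{h^{\rm ex}}'|^2$. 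Adding (a)--(d) gives the claim; for part (ii) one uses instead $c_n' = \tfrac{1}{\eps_n\rho_n}h_n'$, notes the parallel magnetic term is $\rho_n^2|c_n' - {h^{\rm ex}}'|^2 \geq 0$ and is simply discarded, and drops the Cosserat term from the limit entirely.

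The main obstacle I anticipate is justifying the convergence of the cross term and the quadratic potential term in (c) rigorously, since $A^{\rm ex}_\perp$ is unbounded on $\R^3$ — however, on the fixed bounded domain $\Omega$ it is bounded, so the only genuine subtlety is controlling $(A_n)' - {A^{\rm ex}_\perp}'$ near $\partial\Omega$; this is handled by the local strong $H^1$ convergence from Lemma~\ref{lem:GP}(ii) together with a cutoff, or simply by noting $\Omega$ is compactly contained in a ball on which $H^1_{loc}$ convergence is $H^1$ convergence. A secondary point worth care is that $\partial_3 v_n \to 0$ strongly in $L^2$ (used to identify $\partial_3 v = 0$ and hence $u = v$), which follows from the energy bound since $\eps_n^{-1}(\partial_3 - i(A_n)_3)u_n$ is bounded in $L^2$ and $\eps_n \to 0$; combined with $(A_n)_3 \to 0$ in $L^2_{loc}$ and $u_n \to u$ in $L^4$, this forces $\partial_3 v_n \to 0$. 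Everything else is a routine repetition of the critical-case arguments with ${B^{\rm ex}}'$ replaced by ${A^{\rm ex}_\perp}'$ and the extra $x_3$-moment terms absent because $A^{\rm ex}_\perp$ has no $x_3$-dependent parallel component.
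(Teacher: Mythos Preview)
Your proposal is correct and follows essentially the same route as the paper: split the energy into the double-well, vertical gauge-derivative, horizontal gauge-derivative, and magnetic terms, pass to the limit in each using Rellich for $u_n$, Fubini--H\"older--Fatou for the $b_n$ term, the expansion $|(\grad'-iA_n')u_n|^2 = |\grad' u_n|^2 + |A_n'|^2|u_n|^2 + 2\Re(A_n'\bar u_n\cdot\grad' u_n)$ with weak/strong convergences for the covariant term, and weak lower semicontinuity (the paper says ``Fatou'') for the magnetic deviation in case~(i). Your discussion of the subtleties is more explicit than the paper's, but the argument is the same.
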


\begin{proof}[]

Since $u_n \to u$ in $H^1(\Omega;\C)$, we know that $u_n \to u$ in $L^2(\Omega;\C) \cap L^6(\Omega;\C)$, hence
$$
\lim_{n \to \infty} \int_{\Omega} \frac{\kappa^2}{4} \bigl( 1 - |u_n|^2 \bigr)^2 \, dx 
	= \int_{\Omega} \frac{\kappa^2}{4} \bigl( 1 - |u|^2 \bigr)^2 \, dx
	= \int_{\omega} d(x')\frac{\kappa^2}{4} \bigl( 1 - |v|^2 \bigr)^2 \, dx'.
$$

Then because $b_n(x) \wto b$ in $L^2(\Omega;\C)$ and
$$
\liminf_{n \to \infty} \frac12 \int_{\Omega}\left| \frac{1}{\eps}(\partial_3 -i{(A_n)}_3)u_n \right|^2 \, dx 
	\geq \liminf_{n \to \infty} \frac12 \int_{\omega} d(x')|b_n|^2 \, dx 
		\geq \frac12 \int_{\omega} d(x') |b|^2 \, dx',
$$
using Fubini's theorem, H\"older's inequality, and Fatou's lemma. 

For the covariant term, 
we write
$$
\bigl|(\grad' - iA_n') u_n \bigr|^2 = |\grad' u_n|^2 + |A_n' u_n|^2 + 2\,\Re(A_n' \ol{u}_n \cdot \grad' u_n).
$$

Using the fact that $\grad' u_n \wto \grad' u$ in $L^2(\Omega;\C)$, 
$$
\liminf_{n \to \infty} \int_{\Omega} |\grad' u_n|^2 \,dx 
	\geq \int_{\Omega} |\grad' u|^2 \,dx,
$$
and since $A_n \to A^{\rm ex}_{\perp}$ in $H^1_{loc}$, and $u_n \to u$ in $L^4(\Omega;\C)$, 
$$
\lim_{n \to \infty} \int_{\Omega} |A'_n u_n|^2 \,dx 
	= \int_{\Omega} \bigl|{A^{\rm ex}_{\perp}}' u\bigr|^2 \,dx.
$$

Also $A'_n \ol{u}_n \to {A^{\rm ex}_{\perp}}' \ol{u}$ in $L^2(\Omega;\C)$, so 
$$
\lim_{n \to \infty} \int_{\Omega} 2\,\Re(A'_n \ol{u}_n \cdot \grad' u_n) \, dx
	= \int_{\Omega} 2\,\Re\bigl({A^{\rm ex}_{\perp}}' \ol{u} \cdot \grad' u\bigr) \, dx.
$$

This yields
$$
\liminf_{n \to \infty} \int_{\Omega} \bigl|(\grad' - iA'_n) u_n \bigr|^2 \, dx
	\geq \int_{\Omega} \bigl|\bigl(\grad' - i{A^{\rm ex}_{\perp}}'\bigr) u \bigr|^2 \, dx'.
	= \int_{\omega} d(x')\bigl|\bigl(\grad' - i{A^{\rm ex}_{\perp}}'\bigr) u \bigr|^2 \, dx'.
$$

Finally, in case  {\it (i)} we apply Fatou's Lemma to the last term,
$$
\liminf_{n \to \infty} \frac12 \int_{\R^3} \left| \frac{1}{\eps_n} (\grad \times A_n)' - \rho_n {h^{\rm ex}}' \right|^2\, dx
	\geq \frac12 \int_{\R^3}  \left|c' - \rho {h^{\rm ex}}' \right|^2 \, dx.
$$
This completes the proof.
\end{proof}

\subsection{The $\Gamma$ limsup inequality}\label{SS4.3}

As mentioned in the Introduction, the Cosserat vectors in the case
$\rho_n\to\rho$ are the rescaled limit of the $x'$-component of the internal magnetic field.  More specifically, by the compactness result, Theorem~\ref{thm:subcrit_fin}~(i), in case $\rho_n\to\rho\ge 0$,
$$   w'_n:= {1\over\eps_n}(\nabla\times A_n)' -\rho_n h^{ex}{}'
   \wto c' - \rho h^{ex}{}'=: w',  $$
and $w'\in L^2(\R^3;\R^2)$.  In order to construct upper bound sequences we need to recover sequences $w_n\in L^2_{div}(\R^3;\R^3)$ whose first two components converge to $w'$.  

As a first attempt, we may ask whether a given $w'$ may be extended to
$w=(w',w_3)$, a divergence-free $L^2(\R^3;\R^3)$ vector field.  It turns out that this is not possible, even for smooth compactly supported $w'\in C_c^\infty(\R^3;\R^2)$.  Consider the following example:
let $\varphi(x)\in C_c^\infty(\R^3)$ with 
$$   \begin{cases} 
\varphi(x)=1, &\text{for  } {\ds \max_{j\in\{1,2,3\}} |x_j|\leq 1}, \\
\varphi(x)=0, &\text{for } {\ds \max_{j\in\{1,2,3\}} |x_j|\geq 2}, \\
\varphi(x)\geq 0, &\text{in $\R^3$,}
\end{cases} $$
and $w'(x)=(x_1,x_2)\varphi(x)$.  Assume that we can find $w_3(x)$ so that
$w=(w_1,w_2,w_3)\in L^2(\R^3;\R^3)$ with divergence zero.  In that case,
we calculate $\partial_{x_3} w_3 = -2\varphi + (x_1,x_2)\cdot\nabla'\varphi$.  For $(x_1,x_2)=(0,0)$ we conclude
$\partial_{x_3} w_3 =-2\varphi\le 0$ for all $x_3\in\R$, and 
$\partial_{x_3}w_3 =-2$ for $x_3\in [-1,1]$.  In particular, $w_3(0,0,x_3)$ has distinct limits as $x_3\to\pm\infty$, and thus $w\not\in L^2(\R^3;\R^3)$.

Fortunately, we do not require $w'\in L^2(\R^3;R^2)$ to be the restriction of a divergence-free $L^2$ vector field, and we may make indeed recover any
$w'\in L^2(\R^3;\R^2)$ as a limit of divergence-free vector fields as in Theorem~\ref{thm:subcrit_fin}.





\begin{lemma}\label{lem:cosserat}
Let $w' \in L^2(\R^3;\R^2)$. 
Then there is a sequence $\{B_{\eps}\}_{\eps>0} \subset \HHr$ such that $(\grad \times B_{\eps})'\to w'$ in $L^2(\R^3;\R^2)$ and $\eps \grad \times B_{\eps}\to 0$ in $L^2(\R^3;\R^3)$.
\end{lemma}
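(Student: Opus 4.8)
The plan is to recover $w'$ in the limit by first approximating it by a smooth compactly supported field, and then for each such smooth field constructing a divergence-free field on $\R^3$ whose first two components are a good approximation to $w'$ and whose third component is allowed to grow. Since the third component only needs to satisfy $\eps \grad \times B_\eps \to 0$, we have room to absorb the obstruction exhibited in the counterexample above. More precisely, given $\delta>0$ pick $\tilde w' \in C_c^\infty(\R^3;\R^2)$ with $\|w'-\tilde w'\|_{L^2}<\delta$; it then suffices (by a diagonal argument over $\delta = \delta_n \to 0$) to prove the statement for $\tilde w'$, producing a single $B$ with $(\grad\times B)'$ within $O(\delta)$ of $\tilde w'$ in $L^2$ and with $\grad\times B$ bounded in $L^2$ independently of $\eps$ — then $\eps\grad\times B\to 0$ automatically.

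First I would reduce to constructing, for $\tilde w'\in C_c^\infty(\R^3;\R^2)$, a field $g=(\tilde w', g_3)\in L^2(\R^3;\R^3)$ with $\div g = 0$, at the cost of truncating $g_3$. Define $g_3(x',x_3):= -\int_{-\infty}^{x_3}\div'\tilde w'(x',t)\,dt$, which makes $g$ divergence-free by construction; the difficulty is exactly that $g_3(x',+\infty) = -\int_{\R}\div'\tilde w'(x',t)\,dt$ need not vanish, so $g_3\notin L^2$. To fix this, introduce a cutoff at scale $R$: replace $g_3$ by $g_3^R(x',x_3):= \chi(x_3/R)\,g_3(x',x_3)$ with $\chi$ a smooth cutoff equal to $1$ on $[-1,1]$ and vanishing outside $[-2,2]$. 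Then $g^R:=(\tilde w', g_3^R)$ is no longer divergence-free, but $\div g^R = \tfrac1R\chi'(x_3/R)g_3(x',x_3)$ is supported in $R\le |x_3|\le 2R$ with $\|\div g^R\|_{L^2(\R^3)}\to 0$ as $R\to\infty$ (since $g_3$ is bounded in $x'$ with compact support in $x'$ and grows at most linearly, while the measure of the support in $x_3$ is $O(R)$ and the prefactor is $O(1/R)$ — one checks the $L^2$ norm is $O(R^{-1/2})$). Now apply a standard divergence-correction: solve $\Delta \phi_R = \div g^R$ on $\R^3$ and set $g_{corr}^R := g^R - \nabla\phi_R$; then $\div g_{corr}^R = 0$ and $\|\nabla\phi_R\|_{L^2}\le C\|\div g^R\|_{L^2}\to 0$, so the first two components of $g_{corr}^R$ converge to $\tilde w'$ in $L^2$ as $R\to\infty$, while $g_{corr}^R\in L^2(\R^3;\R^3)$ with $\div g_{corr}^R=0$.

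Finally I would pass from $g$ to $B$ via part (1) of Lemma~\ref{lem:GP}: for each $R$, $g_{corr}^R\in L^2(\R^3;\R^3)$ is divergence-free, so there is a unique $B^R\in\HHr$ with $\grad\times B^R = g_{corr}^R$ and $\div B^R=0$; in particular $(\grad\times B^R)' = (g_{corr}^R)'\to \tilde w'$ in $L^2(\R^3;\R^2)$, and $\grad\times B^R = g_{corr}^R$ is bounded in $L^2(\R^3;\R^3)$ uniformly in $R$. Combining the two approximation steps (first $w'\to\tilde w'$, then $\tilde w' \to (g_{corr}^R)'$) by a diagonal choice $R=R(\eps)\to\infty$ as $\eps\to 0$, and pairing with $\tilde w' = \tilde w'_\eps \to w'$, produces $B_\eps:=B^{R(\eps)}_{\tilde w'_\eps}\in\HHr$ with $(\grad\times B_\eps)'\to w'$ in $L^2$ and $\grad\times B_\eps$ bounded in $L^2$, hence $\eps\grad\times B_\eps\to 0$ in $L^2(\R^3;\R^3)$, as required.

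\textbf{Main obstacle.} The essential point — and the only genuinely non-routine step — is controlling the non-vanishing of the antiderivative $g_3(x',+\infty)$: this is precisely the obstruction identified in the counterexample preceding the lemma, and the whole construction hinges on exploiting the looseness in the hypothesis (only $\eps\grad\times B_\eps\to 0$ is demanded, not $\grad\times B_\eps\in L^2$ with small norm) to cut off $g_3$ at a large scale and then repair the resulting small divergence. Verifying the scaling $\|\div g^R\|_{L^2}=O(R^{-1/2})$ and that the correction $\nabla\phi_R$ does not spoil the first two components requires care but is standard elliptic estimation; everything else is bookkeeping and a diagonal argument.
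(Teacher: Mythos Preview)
Your overall architecture --- approximate $w'$ by smooth compactly supported data, build the third component by antidifferentiation, cut off in $x_3$, correct the divergence by solving a Poisson equation, invoke Lemma~\ref{lem:GP}, then diagonalize --- is exactly the route the paper takes (the paper routes the density step through simple functions rather than directly through $C_c^\infty$, but this is cosmetic).

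There is, however, a genuine error: your assertion that ``$\grad\times B^R = g_{corr}^R$ is bounded in $L^2(\R^3;\R^3)$ uniformly in $R$'' is false, and this is precisely the obstruction from the counterexample preceding the lemma. The third component $g_3^R=\chi(x_3/R)\,g_3$ equals $G(x'):=-\int_\R \div'\tilde w'(x',t)\,dt$ on the slab $M<x_3<R$, so $\|g_3^R\|_{L^2}^2\geq (R-M)\|G\|_{L^2(\R^2)}^2\to\infty$ whenever $G\not\equiv 0$. Since $\|\nabla\phi_R\|_{L^2}\to 0$, this blow-up survives in $g_{corr}^R$. You even anticipate this in your ``Main obstacle'' paragraph (``only $\eps\grad\times B_\eps\to 0$ is demanded''), yet the construction as written contradicts it. The fix is what the paper does: do not claim uniform boundedness, but rather track the growth $\|\grad\times B^R\|_{L^2}=O(\sqrt R)$ and, in the diagonal step, tie $R=R(\eps)\to\infty$ to $\eps$ so that $\eps\sqrt{R(\eps)}\to 0$ (the paper phrases this as choosing the approximation parameter so that $\|(W)_3\|_{L^2}\leq \eps^{-1/2}$).

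A secondary issue: the estimate ``$\|\nabla\phi_R\|_{L^2}\leq C\|\div g^R\|_{L^2}$'' is not valid on $\R^3$ (there is no such Poincar\'e inequality). The conclusion $\|\nabla\phi_R\|_{L^2}\to 0$ is nevertheless correct: either estimate $\|\div g^R\|_{\dot H^{-1}}$ by writing $\div g^R=\div' F_R$ with $F_R(x)= -\tfrac1R\chi'(x_3/R)\int_\R\tilde w'(x',t)\,dt$ and $\|F_R\|_{L^2}=O(R^{-1/2})$, or use the $L^{6/5}$--$L^6$ duality as the paper does,
\[
\int_{\R^3}|\nabla\phi_R|^2 = -\int_{\R^3}(\div g^R)\,\phi_R \leq \|\div g^R\|_{L^{6/5}}\|\phi_R\|_{L^6}\leq C\|\div g^R\|_{L^{6/5}}\|\nabla\phi_R\|_{L^2},
\]
and check $\|\div g^R\|_{L^{6/5}}=O(R^{-1/6})$.
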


\begin{proof}[]
We divide the proof in three steps.

\paragraph{Step 1.} $w'$ is the characteristic function of a compact set.

Assume that $w'(x) = (1,1)\chi_{K}(x)$ where $K \subset \R^3$ is a compact set. Then, for all $\delta>0$, define $w'_{\delta} := w' * \psi_{\delta}$ where $\psi_{\delta}(x) = \frac{1}{\delta^3}\psi\bigl(\frac{x}{\delta}\bigr)$ and $\psi \in C^{\infty}_c(\R^3)$ is the standard mollifier.
Consider ${(w_{\delta})}_3(x) := -\int_0^{x_3} \bigl( \partial_1 {(w_{\delta})}_1(x',t) + \partial_2 {(w_{\delta})}2(x',t) \bigr) \,dt \in C^{\infty}(\R^3;\R^2)$, and $\div w_{\delta} = 0$.

Consider the function $\chi_{\eta} \in C^{\infty}(\R)$ such that $\chi_{\eta}(t) \equiv 1$ for $|t| \leq \eta$ and $\chi_{\eta}(t) \leq C\exp\bigl(-{(t^2-\eta^2)}^2\bigr)$, $\chi_{\eta} \geq 0$, and $\|\chi_{\eta}'\|_{\infty} \leq C$. Now we define $W_{\eta,\delta}(x) := w_{\delta}(x) \chi_{\eta}(x_3) - \grad \varphi_{\eta,\delta}(x)$, where $\varphi_{\eta,\delta} \in H^1(\R^3)$ is the solution of $\Delta \varphi_{\eta,\delta}(x) = {(w_{\delta})}_3(x) \chi'_{\eta}(x_3)$.

Since $\div W_{\eta,\delta} = 0$, by Lemma \ref{lem:GP}, we find $B_{\eta,\delta} \in \HHr$ such that $\grad \times B_{\eta,\delta} = W_{\eta,\delta}$.

On the other hand, since $w_{\delta} \in L^{\infty}(\R^3;\R^3)$, we have that $\Delta \varphi_{\eta,\delta} \to 0$ as $\eta \to \infty$ in $L^2(\R^3)$, and 
$$
\int_{\R^3} |\grad \varphi_{\eta,\delta}|^2 \, dx 
	= - \int_{\R^3} {(w_{\delta})}_3 \chi_{\eta}'(x_3) \varphi_{\eta,\delta} \, dx
	\leq \| {(w_{\delta})}_3 \chi_{\eta}'\|_{L^{\frac65}(\R^3)} \|\varphi_{\eta,\delta}\|_{L^6(\R^3)}
	\to 0 \text{ as } \eta \to \infty.
$$
Thus $\varphi_{\eta,\delta} \to 0$ as $\eta \to \infty$ in $H^1(\R^3)$, which means that $(\grad \times B_{\eta,\delta})' \to w'_{\delta}$ as $\eta \to \infty$ in $L^2(\R^3;\R^2)$. Then we can find $\eta_{\delta} \to \infty$ as $\delta \to 0^+$ such that 
$$
\|(\grad \times B_{\eta_{\delta},\delta})' - w'_{\delta}\|_{L^2(\R^3;\R^2)} \leq \delta.
$$
Denote $B_{\delta} := B_{\eta_{\delta},\delta}$ and $W_{\delta} := W_{\eta_{\delta},\delta}$. Then,
\begin{align*}
\int_{\R^3} \left| \grad \times B_{\delta}' - w'\right|^2 \, dx
	& \leq  2 \int_{\R^3} \left|\grad \times B_{\delta}' - w'_{\delta}\right|^2 \, dx  + 2 \int_{\R^3} |w'_{\delta} - w'|^2 \, dx \\
	& \leq 2\delta^2 + 2 \|w'_{\delta} - w'\|_{L^2(\R^3;\R^2)}^2 
	\to 0 \text{ as } \delta \to 0^+,
\end{align*}
so $(\grad \times B_{\delta})' \to w'$ in $L^2(\R^3;\R^2)$, which implies that $\eps(\grad \times B_{\delta_{\eps}})' \to 0$ in $L^2(\R^3;\R^2)$ for all $\delta_{\eps} \to 0^+$.

For the third component of the curl, we may choose $\delta_{\eps} \to 0^+$ as $\eps \to 0^+$ such that 
$$
\|(W_{\delta_{\eps}})_3\|_{L^2(\R^3;\R^3)} \leq \frac{1}{\sqrt{\eps}}.
$$
This yields $\eps W_{\delta_{\eps}} \to 0$ in $L^2(\R^3;\R^3)$.

\paragraph{Step 2.} $w'$ is a simple function with compact support.

Since these functions are just a finite sum of characteristic functions of compact sets, the proof follows immediately from Step 1.

\paragraph{Step 3.} General case.

Let $w' \in L^2(\R^3;\R^2)$.
Then, we can find a sequence of simple functions with compact support $\{w'_n\}$ such that $w'_n \to w'$ in $L^2(\R^3;\R^2)$.

Then, following the construction in Step 1, we can find a sequence $B_{n,\delta} \in \HHr$ satisfying
$$
(\grad \times B_{n,\delta})'\to w'_n \text{ as } \delta \to 0^+ \text{ in } L^2(\R^3;\R^2).
$$
Hence we can find $\delta_n \to 0^+$ such that
$$
\|(\grad \times B_{n,\delta_n})' - w'_n\|_{L^2(\R^3;\R^2)} \leq \frac1n,
$$
thus $(\grad \times B_{n,\delta_n})'\to w'  \text{ in } L^2(\R^3;\R^2)$. We write $B_n := B_{n,\delta_n}$.
Then $\eps(\grad \times B_{n_{\eps}})' \to 0$ in $L^2(\R^3;\R^2)$ for all $n_{\eps} \to \infty$.

For the third component of the curl, we make use of the extra $\eps$ by choosing $n_{\eps} \to \infty$ as $\eps \to 0^+$ such that
$$
\|(W_{n_{\eps}})_3\|_{L^2(\R^3;\R^3)} \leq \frac{1}{\sqrt{\eps}}.
$$
This yields $\eps W_{n_{\eps}} \to 0$ in $L^2(\R^3;\R^3)$. Write $B_{\eps} := B_{n_{\eps}}$ and the proof is complete.
\end{proof}

\begin{proposition}[$\Gamma-\limsup$ inequality for Theorem~\ref{thm:subcrit_fin}]

Let $\bigl(u,b,c' - \rho{h^{\rm ex}}'\bigr) \in \mathcal{V}_-$ and let $\{\eps_n\} \in \R$ be a sequence such that $\eps_n \to 0^+$ and $\rho_n \to \rho$.
Then, there exist sequences $\{u_n\} \subset H^1(\Omega;\C)$ and $\{A_n-A^{\rm ex}_n\} \subset \HHr$ such that
\begin{align*}
& u_n \wto u \text{ in } H^1(\Omega;\C), \\
& b_n \equiv b \text{ a.e. in } \omega, \\
& A_n-A^{\rm ex}_{n} \to 0 \text{ in } \HHr,  \\
& c_n'-\rho_n h^{ex}{}'  \to c' -\rho h^{ex}{}'\text{ in } L^2(\R^3;\R^2),
\end{align*}
with $c'_n$ as in Theorem \ref{thm:subcrit_fin},
and
$$
\lim_{n \to \infty} I_{\eps_n}(u_n,A_n) 
	= \frac12 \int_{\omega} \left( \bigl|\bigl(\grad' -i{A^{\rm ex}_{\perp}}'\bigr) u\bigr|^2 + | b|^2
	+ \frac{\kappa^2}{2} \bigl( 1 - |u|^2 \bigr)^2  \right) \, dx' 
		+ \frac12 \int_{\R^3} \left| c - \rho {h^{\rm ex}}'\right|^2\, dx.
$$
\end{proposition}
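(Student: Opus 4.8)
The plan is to follow the upper-bound construction of Proposition~\ref{ub:critical}, the one genuinely new point being the recovery of the Cosserat vector $c'$, for which Lemma~\ref{lem:cosserat} is tailor-made. As a preliminary reduction, by density of $C^\infty(\overline\omega)$ in $H^1(\omega;\C)$ (strongly) and in $L^2(\omega;\C)$, together with the strong continuity of $I_{\kappa,-}^\rho(\cdot,\cdot,c')$ in $(u,b)$ and a standard diagonal argument, it suffices to prove the inequality when $u$ and $b$ are smooth on $\overline\omega$; the vector $c'$, equivalently $w':=c'-\rho{h^{\rm ex}}'\in L^2(\R^3;\R^2)$, is left untouched. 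The regularization of $u$ and $b$ is not cosmetic: it is what makes the candidate field bounded in $L^\infty(\Omega)$, which is needed below to annihilate certain cross terms.

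For the vector potential, apply Lemma~\ref{lem:cosserat} to $w'$ to obtain $\{\tilde B_\eps\}\subset\HHr$ with $(\nabla\times\tilde B_\eps)'\to w'$ in $L^2(\R^3;\R^2)$ and $\eps\,\nabla\times\tilde B_\eps\to 0$ in $L^2(\R^3;\R^3)$, and set $B_n:=\eps_n\tilde B_{\eps_n}$, $A_n:=A^{\rm ex}_n+B_n$. Then $\nabla\times B_n=\eps_n\nabla\times\tilde B_{\eps_n}\to 0$ in $L^2$, so $A_n-A^{\rm ex}_n=B_n\to 0$ in $\HHr$ by Lemma~\ref{lem:GP}(2); moreover $c'_n-\rho_n{h^{\rm ex}}'=\frac1{\eps_n}(\nabla\times B_n)'=(\nabla\times\tilde B_{\eps_n})'\to w'$ in $L^2(\R^3;\R^2)$, and in the magnetic part of the energy $\frac1{\eps_n^2}\bigl|h'_n-\eps_n\rho_n{h^{\rm ex}}'\bigr|^2=|(\nabla\times\tilde B_{\eps_n})'|^2\to|w'|^2$ in $L^1(\R^3)$, while $|h_{3,n}-h^{\rm ex}_3|^2=\eps_n^2|(\nabla\times\tilde B_{\eps_n})_3|^2\to 0$ in $L^1(\R^3)$. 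This already produces the term $\frac12\int_{\R^3}|c'-\rho{h^{\rm ex}}'|^2\,dx$.

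For the order parameter, mirror \eqref{eq:def_un}: let $\phi_n(x):=\int_0^{x_3}(A_n)_3(x',t)\,dt$, put $v_n(x):=u(x')+\eps_n b(x')x_3$ and $u_n:=v_ne^{i\phi_n}$. The phase is chosen so that $v_n$ is exactly the gauged field $u_ne^{-i\int_0^{x_3}(A_n)_3}$ entering the definition of $b_n$, whence $b_n\equiv b$; and it collapses the vertical covariant derivative, $\frac1{\eps_n}(\partial_3-i(A_n)_3)u_n=e^{i\phi_n}b(x')$, whose modulus square integrates to exactly $\frac12\int_\omega d(x')|b|^2$. For the horizontal term write $|(\nabla'-iA_n')u_n|^2=|\nabla'v_n-i\mathcal A_n v_n|^2$ with $\mathcal A_n:=A_n'-\nabla'\phi_n$, and show $\mathcal A_n\to{A^{\rm ex}_\perp}'$ in $L^2(\Omega;\R^2)$: indeed $(A^{\rm ex}_n)'\to{A^{\rm ex}_\perp}'$ in $L^\infty(\Omega)$ since $\eps_n\rho_n\to 0$, $B_n'=\eps_n(\tilde B_{\eps_n})'\to 0$ in $L^6(\Omega)$, and $\nabla'\phi_n\to 0$ in $L^2(\Omega;\R^2)$ because $\|\nabla'(B_n)_3\|_{L^2(\R^3)}=\eps_n\|\nabla'(\tilde B_{\eps_n})_3\|_{L^2}\le C\|\eps_n\nabla\times\tilde B_{\eps_n}\|_{L^2}\to 0$ together with the antidifferentiation estimate $\|\int_0^{x_3}g(x',t)\,dt\|_{L^2(\Omega)}\le C_M\|g\|_{L^2(\omega\times(-M,M))}$ on the slab $\Omega\subset\omega\times(-M,M)$. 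Since $v_n$ is bounded in $H^1(\Omega)$ and in $L^\infty(\Omega)$ with $v_n\to u$ in every $L^p(\Omega)$, $p<\infty$, expanding the square and using $\mathcal A_n\to{A^{\rm ex}_\perp}'$ in $L^2$ makes all cross terms vanish and gives $\int_\Omega|(\nabla'-iA_n')u_n|^2\to\int_\omega d(x')|(\nabla'-i{A^{\rm ex}_\perp}')u|^2$; the potential term converges as in Proposition~\ref{ub:critical}. Adding the four contributions yields $I_{\kappa,-}^\rho(u,b,c')$, and the convergences \eqref{eq:sub_comp_1}--\eqref{eq:sub_comp_2} follow since $u_n\to u$ in $L^2(\Omega)$ (again by the antidifferentiation estimate applied to $\phi_n$) with $\nabla u_n$ bounded in $L^2$.

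The step I expect to be the bottleneck is precisely this horizontal covariant term. The vertical component $(B_n)_3$ is genuinely forced to depend on $x'$ — one checks that a general $w'$ cannot be written as $\frac1\eps(\nabla\times B)'$ with $B\in\HHr$ divergence-free and $B_3$ independent of the in-plane variables — so the gauge correction $\nabla'\phi_n$ is present and is controlled only in $L^2$. Keeping the cross terms $\int_\Omega\mathcal A_n\overline{v_n}\cdot\nabla'v_n$ and $\int_\Omega|\mathcal A_n-{A^{\rm ex}_\perp}'|^2|v_n|^2$ small therefore uses, simultaneously, the $L^\infty(\Omega)$-bound on $v_n$ (hence the regularization of $u$ and $b$) and the extra factor $\eps$ in the conclusion $\eps\,\nabla\times\tilde B_\eps\to 0$ of Lemma~\ref{lem:cosserat}; with those in hand the remainder is routine and follows the critical-case template, and the companion statement for $\limsup\rho_n=\infty$ (Theorem~\ref{thm:subcrit_inf}) is the degenerate case, handled by the same construction with $B_n\equiv 0$.
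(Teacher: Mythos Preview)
Your proof is correct and follows essentially the same construction as the paper: apply Lemma~\ref{lem:cosserat} to $w'=c'-\rho{h^{\rm ex}}'$, set $A_n=A^{\rm ex}_n+\eps_n\tilde B_{\eps_n}$, and take $u_n$ to be a phase-modulated $u(x')+\eps_n b(x')x_3$. Two minor differences are worth noting: the paper works directly with $u\in H^1(\omega)$, $b\in L^2(\omega)$ without your preliminary density reduction and simply asserts $|(\nabla'-iA_n')u_n|^2=|(\nabla'-i{A^{\rm ex}_\perp}')u|^2+O_{L^1}(\eps_n)$, whereas your smoothing step and the resulting $L^\infty$ bound on $v_n$ make the control of the cross terms involving $\mathcal A_n$ (which is only $L^2$) completely transparent; and you put the full $(A_n)_3$ in the phase $\phi_n$ rather than only $\eps_n(B_n)_3$, which has the pleasant side effect that $b_n\equiv b$ holds exactly, while in the paper's construction an extra factor $e^{-i(A^{\rm ex}_n)_3 x_3}$ survives in $v_n$ and one must use $\eps_n\rho_n\to 0$ to see that $b_n\to b$.
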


\begin{proof}[]
Applying Lemma \ref{lem:cosserat} to $w'=c'-\rho h^{ex}{}'$, we find a sequence of potentials $B_n \in \HHr$.  We define
$$
A_n(x) := A^{\rm ex}_{n}(x) + \eps_n B_n(x),
$$
so that
$$ c'_n = {1\over\eps_n} (\nabla\times A_n)' = 
       \rho_n h^{ex}{}' + (\nabla\times B_n)', $$
and $(\nabla\times B_n)'\to w'=c'-\rho h^{ex}{}'$.
Then define
$$
u_n(x) 
	= e^{i \eps_n \int_0^{x_3} (B_n)_3(x',t)\,dt} \bigl( u(x')+\eps_n b(x')x_3 \bigr).
$$
Then, we prove first that the convergences in the proposition hold.

First, note that
\begin{align*}
|u_n - u| 
	& = \left | u \left( e^{i \eps_n \int_0^{x_3} (B_n)_3(x',t)\,dt} -1\right) + \eps_n e^{i \eps_n  \int_0^{x_3} (B_n)_3(x',t)\,dt} b(x')x_3\right| \\
	& \leq |u| \left| e^{i \eps_n \int_0^{x_3} (B_n)_3(x',t)\,dt} -1\right| + \eps_n  \left| b(x')\right|
\end{align*}
so that
\begin{align*}
& |u_n - u|  \to 0 \text{ a.e.  in } \Omega, \\
& |u_n-u|^2 \leq 4 |u| + C \eps_n \text{ which is integrable in } \Omega.
\end{align*}

Using Lebesgue's Dominated Convergence, we deduce that
$$
u_n \to u \text{ in } L^2(\Omega;\C).
$$

Since $\{\grad u_n\}$ is bounded in $L^2(\Omega;\C^3)$, we know that
$$
u_n \wto u \text{ in } H^1(\Omega;\C),
$$
and $b_n \equiv b$.

Also, $\eps_n B_n \to 0$ in $\HHr$, so 
$$
A_n - A^{\rm ex}_{n} \wto 0 \text{ in } \HHr.
$$

By convergence of $B'_n$, we have that
\begin{align*}
\lim_{n\to\infty} {1\over\eps_n^2} \int_{\R^3} 
|h'-\eps_n\rho_n h^{ex}{}'|^2
&=
\lim_{n\to\infty} \int_{\R^3} |c_n' - \rho_n {h^{\rm ex}}'|^2 \, dx \\
& = \int_{\R^3} |c' - \rho{h^{\rm ex}}'|^2 \, dx,
\end{align*}
and because $\eps_n \grad \times B_n \to 0$ in $L^2(\R^3;\R^3)$, we have that
$$
\lim_{n\to\infty} \int_{\R^3} |(\nabla\times A_n)_3 - h^{\rm ex}_3|^2 \, dx = 0.
$$

Moreover, we know that
$$
|u_n(x)|^2 
	= \left| u(x') + \eps_n b(x')x_3\right|^2
	= |u|^2 + O_{L^1}(\eps_n) 
$$
thus
$$
\bigl(1-|u_n|^2\bigr)^2 
	= \bigl(1-|u|^2\bigr)^2 + \, O_{L^1}(\eps_n) 
	\to \bigl(1-|u|^2\bigr)^2 
	\text{ in } L^1(\Omega).
$$

By Lebesgue's Dominated Convergence Theorem, we obtain that
$$
\lim_{n\to \infty} \frac12 \int_{\Omega} \bigl(1-|u_n|^2\bigr)^2 \, dx 
	= \frac12 \int_{\omega} \bigl(1-|u|^2\bigr)^2 \, dx'.
$$

As for the covariant term, we have
\begin{align*}
\bigl| (\grad'-iA_n')u_n\bigr|^2
	 = \bigl| (\grad'-i{A^{\rm ex}_{\perp}}')u\bigr|^2  + O_{L^1}(\eps_n),
\end{align*}
hence
$$
\lim_{n\to\infty} \frac12 \int_{\Omega} \bigl| (\grad'-iA_n')u_n\bigr|^2 \, dx 
	= \frac12 \int_{\omega} \bigl| \bigl(\grad'-i{A^{\rm ex}_{\perp}}'\bigr)u\bigr|^2 \, dx'.
$$

This completes the proof.
\end{proof}

\begin{proposition}[$\Gamma-\limsup$ inequality for Theorem~\ref{thm:subcrit_inf}]

Let $(u,b) \in \mathcal{V}_0$ and let $\{\eps_n\} \in \R$ be a sequence such that $\eps_n \to 0^+$ and $\limsup \rho_n = \infty$.
Then, there exist sequences $\{u_n\} \subset H^1(\Omega;\C)$ and $\{A_n-A^{\rm ex}_n\} \subset \HHr$  such that
\begin{align*}
& u_n \wto u \text{ in } H^1(\Omega;\C), \\
& b_n \equiv b \text{ a.e. in } \omega, \\
& A_n -A^{\rm ex}_{n}\to 0 \text{ in } \HHr,  \\
& c_n' \equiv h^{ex}{}' \text{ a.e. in } \R^3,
\end{align*}
with $c'_n$ as in Theorem \eqref{thm:subcrit_inf},
and
$$
\lim_{n \to \infty} I_{\eps_n}(u_n,A_n) 
	= \frac12 \int_{\omega} \left( \bigl|\bigl(\grad' -i{A^{\rm ex}_{\perp}}'\bigr) u\bigr|^2 + | b|^2
	+ \frac{\kappa^2}{2} \bigl( 1 - |u|^2 \bigr)^2  \right) \, dx'.
$$
\end{proposition}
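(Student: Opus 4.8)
The plan is to produce an explicit recovery sequence and to exploit the feature, special to the regime $\limsup\rho_n=\infty$, that the magnetic-field terms of the energy can be made to vanish identically. First I would take $A_n:=A^{\rm ex}_{n}$, so that $A_n-A^{\rm ex}_n\equiv 0\in\HHr$, $\nabla\times A_n=(\eps_n\rho_n{h^{\rm ex}}',h^{\rm ex}_3)$, hence $c'_n=\frac{1}{\eps_n\rho_n}(\nabla\times A_n)'\equiv{h^{\rm ex}}'$, and the whole term $\frac12\int_{\R^3}\bigl(|h_3-h^{\rm ex}_3|^2+\frac{1}{\eps_n^2}|h'-\eps_n\rho_n{h^{\rm ex}}'|^2\bigr)\,dx$ is exactly $0$. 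This is precisely the simplification absent in Theorem~\ref{thm:subcrit_fin}, where the Cosserat vector had to be recovered through Lemma~\ref{lem:cosserat}.

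For the order parameter I would use the same ansatz as in Proposition~\ref{ub:critical}, namely $u_n(x):=e^{i\int_0^{x_3}(A_n)_3(x',t)\,dt}\bigl(u(x')+\eps_n b(x')x_3\bigr)$, treating first the case $b\in C^\infty(\ol{\omega};\C)$ and recovering a general $b\in L^2(\omega;\C)$ at the end by density. Since $\eps_n\rho_n\to 0$ we have $(A_n)_3=O(\eps_n\rho_n)$ uniformly on the bounded set $\Omega$, so the phase tends to $1$ uniformly, $\eps_n b(x')x_3\to 0$ in $L^2$, and thus $u_n\to u$ in $L^2(\Omega;\C)$; computing $\nabla u_n$, whose only potentially unbounded piece $\eps_n x_3\nabla'b$ is controlled once $b$ is smooth, shows $\{\nabla u_n\}$ is bounded in $L^2$, so $u_n\wto u$ in $H^1(\Omega;\C)$. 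Setting $v_n:=u_n e^{-i\int_0^{x_3}(A_n)_3\,dt}=u(x')+\eps_n b(x')x_3$ gives $\partial_3 v_n=\eps_n b(x')$, hence $b_n\equiv b$ exactly, while $A_n-A^{\rm ex}_n\equiv 0$ and $c'_n\equiv{h^{\rm ex}}'$ as required.

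It then remains to pass to the limit in the surviving terms of $I_{\eps_n}$. The vertical kinetic term is exact, since $\frac{1}{\eps_n}(\partial_3-i(A_n)_3)u_n=e^{i\int_0^{x_3}(A_n)_3\,dt}\,b(x')$, whence $\frac12\int_\Omega\bigl|\frac{1}{\eps_n}(\partial_3-i(A_n)_3)u_n\bigr|^2\,dx=\frac12\int_\omega d(x')|b|^2\,dx'$. The potential term converges by dominated convergence, using $|u_n|^2=|u|^2+O_{L^1}(\eps_n)$ together with $u_n\to u$ in $L^4(\Omega;\C)$. For the horizontal covariant term I would note $A_n'=(A^{\rm ex}_n)'\to{A^{\rm ex}_{\perp}}'$ uniformly on $\Omega$, the $\eps_n\rho_n$-dependent part dropping out, while $u_n\to u$ in $L^4(\Omega;\C)$ with $x_3$-dependence of order $\eps_n$; expanding $|(\nabla'-iA_n')u_n|^2$ as in the derivation of \eqref{eq:Bv}, but now with no surviving parallel-field contribution, gives $\int_\Omega|(\nabla'-iA_n')u_n|^2\,dx\to\int_\omega d(x')|(\nabla'-i{A^{\rm ex}_{\perp}}')u|^2\,dx'$. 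Summing the three limits yields exactly the stated value of $\lim_n I_{\eps_n}(u_n,A_n)$.

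Finally, for general $b\in L^2(\omega;\C)$ I would choose smooth $b^{(k)}\to b$ in $L^2$, apply the construction above to each pair $(u,b^{(k)})$, and diagonalize; since $I^{\infty}_{\kappa,-}(u,\cdot)$ depends on $b$ only through the continuous functional $\frac12\int_\omega d(x')|b|^2\,dx'$, and the recovery convergences survive the diagonalization (one retains $b_n\wto b$ in place of $b_n\equiv b$, which is what $\Gamma$-convergence requires), the $\limsup$ inequality follows. I expect the only genuinely delicate point to be this regularity reduction — establishing a recovery sequence when $b$ is merely $L^2$ — together with checking that after mollification and diagonalization the weak convergences $u_n\wto u$ in $H^1$ and $b_n\wto b$ in $L^2$ are preserved; everything else is a routine repetition of the critical-case computation with the magnetic-field terms switched off.
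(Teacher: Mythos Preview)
Your proposal is correct and follows essentially the same construction as the paper: the paper also takes $A_n=A^{\rm ex}_n=A^{\rm ex}_{\perp}+\eps_n\rho_n A^{\rm ex}_{\|}$ and $u_n=e^{i\int_0^{x_3}(A_n)_3\,dt}\bigl(u(x')+\eps_n b(x')x_3\bigr)$, verifies the same convergences, and computes the energy limit in the same way by referring back to the previous (finite-$\rho$) proof. The one difference is your extra density step in $b$, which the paper omits; this is arguably needed to guarantee $u_n\in H^1(\Omega;\C)$ when $b$ is merely $L^2$, at the cost of ending with $b_n\to b$ rather than the paper's $b_n\equiv b$.
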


\begin{proof}[]
Define
$$
A_n(x) := A^{\rm ex}_{\perp}(x') + \eps_n \rho_n A^{\rm ex}_{\|}(x),
$$
and
$$
u_n(x) 
	:= e^{i \eps_n\rho_n \int_0^{x_3} {A^{\rm ex}_{\|}}_3(x',t)\,dt} \bigl( u(x')+\eps_n b(x')x_3 \bigr).
$$

Then, we prove first that the convergences in the proposition hold.
As in the previous proof, we deduce that
$$
u_n \wto u \text{ in } H^1(\Omega;\C),
$$
and $b_n \equiv b$.
Also, 
$$
A_n - A^{\rm ex}_{\perp}\to 0 \text{ in } \HHr,
$$
and
$$
c_n' \equiv {h^{\rm ex}}' \quad \text{ in } L^2(\R^3;\R^2).
$$

Moreover,
$$
I_{\eps_n}(u_n,A_n)
	= \frac12 \int_{\Omega} \left( | (\grad'-iA_n')u_n|^2 + |b|^2 + \frac{\kappa^2}{2} \bigl( 1 - |u_n|^2 \bigr)^2 \right) \, dx.
$$

Following the same reasoning as in the previous proof, we obtain
$$
\lim_{n\to \infty} I_{\eps_n}(u_n,A_n)
	= \frac12 \int_{\omega} \left( \bigl| \bigl(\grad'-i{A^{\rm ex}_{\perp}}'\bigr)u\bigr|^2 + |b|^2 + \frac{\kappa^2}{2} \bigl( 1 - |u|^2 \bigr)^2 \right) \, dx'.
$$
This completes the proof.
\end{proof}


\section{Supercritical Case}
\label{supersec}

\begin{theorem}[Compactness]\label{thm:compactness_super}
Let $\eps_n \to 0^+$ as $n\to+\infty$ and let $\{u_{n}, A_n - A^{\rm ex}\}_{n\in\NN} \subset H^{1}(\Omega;\C) \times \HHr$ be such that
$$
\sup_{n\in\NN}\, I_{\eps_n}(u_n,A_n) < +\infty.
$$
Then there exist a subsequence $\{\eps_n\}$ (not relabeled) such that 
\begin{align*}
& u_n \to 0  \text{ in } L^{2}(\Omega;\C), \\
& \frac{1}{\eps_n\rho_n} A_n - A^{\rm ex}_{\|} \wto 0 \text{ in } \HHr, \\\end{align*}
\end{theorem}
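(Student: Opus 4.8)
The plan is to read the vector-potential statements off Lemma~\ref{lem:GP}, and to force $u_n\to 0$ by showing that the bottom of the spectrum of the rescaled magnetic Schr\"odinger operator diverges because of the huge applied in-plane field; throughout I assume ${h^{\rm ex}}'\neq 0$, the only genuinely supercritical situation. Write $B_n:=A_n-A^{\rm ex}_{\eps_n}\in\HHr$, $h_n:=\grad\times A_n$, and $K:=\sup_n I_{\eps_n}(u_n,A_n)$. Since $\grad\times B_n=(h_n'-\eps_n\rho_n{h^{\rm ex}}',\,h_{n,3}-h^{\rm ex}_3)$, the magnetic part of the energy gives $\eps_n^{-2}\|(\grad\times B_n)'\|_{L^2(\R^3)}^2+\|(\grad\times B_n)_3\|_{L^2(\R^3)}^2\le 2K$, so $\|\grad\times B_n\|_{L^2}^2\le 2K(1+\eps_n^2)$ is bounded and, by Lemma~\ref{lem:GP}(2), so is $\|B_n\|_{\HH}$. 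Passing to a subsequence $B_n\wto B$ in $\HHr$, and since $\eps_n\rho_n\to\infty$ we even have $\tfrac1{\eps_n\rho_n}B_n\to 0$ strongly in $\HHr$; as $\tfrac1{\eps_n\rho_n}A_n-A^{\rm ex}_{\|}=\tfrac1{\eps_n\rho_n}B_n+\tfrac1{\eps_n\rho_n}A^{\rm ex}_{\perp}$ and the last term has coefficient $h^{\rm ex}_3/(\eps_n\rho_n)\to 0$, the stated convergence $\tfrac1{\eps_n\rho_n}A_n-A^{\rm ex}_{\|}\wto 0$ follows.

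For the order parameter, the remaining terms of $I_{\eps_n}$ being nonnegative, we get the uniform bound
$$
Q_n(u_n):=\int_\Omega\Bigl(|(\grad'-iA_n')u_n|^2+\tfrac1{\eps_n^2}|(\partial_3-i(A_n)_3)u_n|^2\Bigr)\,dx\ \le\ 2K ,
$$
so it is enough to prove the Poincar\'e-type inequality
\begin{equation}\label{eq:super_LB}
Q_n(u)\ \ge\ \mu_n\int_\Omega|u|^2\,dx\qquad (u\in H^1(\Omega;\C)),\qquad\mu_n\to+\infty ,
\end{equation}
since then $\int_\Omega|u_n|^2\le 2K/\mu_n\to 0$, which is $u_n\to 0$ in $L^2(\Omega;\C)$.

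To prove~\eqref{eq:super_LB} I would change gauge to $v:=u\,e^{-i\int_0^{x_3}(A_n)_3(x',t)\,dt}$, so that $Q_n(u)=\int_\Omega(|(\grad'-i\tilde A_n')v|^2+\tfrac1{\eps_n^2}|\partial_3v|^2)$ with $\tilde A_n':=A_n'-\grad'\!\int_0^{x_3}(A_n)_3$. Using the explicit formula for $A^{\rm ex}_{\eps_n}$ one gets the exact identity $\tilde A_n'=\eps_n\rho_n\,x_3\,(h^{\rm ex}_2,-h^{\rm ex}_1)+R_n$ with $R_n=\tfrac12 h^{\rm ex}_3(-x_2,x_1)+B_n'-\grad'\!\int_0^{x_3}(B_n)_3$ bounded in $L^2(\Omega)$ (from the $L^6$- and Dirichlet-norm bounds on $B_n$). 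Rotating the $x'$-axes so that $(h^{\rm ex}_2,-h^{\rm ex}_1)=|{h^{\rm ex}}'|\,e_1$ and putting $\beta_n:=\eps_n\rho_n|{h^{\rm ex}}'|\to\infty$, the leading part of $\tilde A_n'$ is $(\beta_nx_3,0)$; dropping the nonnegative $x_2$-covariant term and absorbing the $R_n$-contribution reduces~\eqref{eq:super_LB} to a lower bound for $\int_\Omega(|(\partial_1-i\beta_nx_3)v|^2+\tfrac1{\eps_n^2}|\partial_3v|^2)$. This I would treat fibrewise: expanding $v(x',\cdot)$ on the Neumann eigenbasis of $L^2((f(x'),g(x')))$, each mode orthogonal to the constant carries $x_3$-kinetic energy $\ge(\pi/\|d\|_\infty)^2\eps_n^{-2}$ times its $L^2$-mass, whereas on the $x_3$-independent mode $v_0=v_0(x')$ completing the square in $x_3$ yields
$$
\int_{f(x')}^{g(x')}|(\partial_1-i\beta_nx_3)v_0|^2\,dx_3
= d\,\bigl|(\partial_1-i\beta_n\tfrac{f+g}{2})v_0\bigr|^2+\frac{\beta_n^2d^3}{12}|v_0|^2
\ \ge\ \frac{\beta_n^2(\inf_\omega d)^2}{12}\,d\,|v_0|^2 .
$$
Adding the two contributions gives~\eqref{eq:super_LB} with $\mu_n\simeq\min\bigl(\eps_n^{-2},\tfrac1{12}\beta_n^2(\inf_\omega d)^2\bigr)\to+\infty$.

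The real work is~\eqref{eq:super_LB}. Because the film faces carry \emph{Neumann} conditions, the textbook Landau-level estimate $\langle(P_1^2+P_3^2)v,v\rangle\ge|c|\,\|v\|^2$ for $[P_1,P_3]=ic$ is not available, so one genuinely has to combine the thinness of $\Omega$ (which quantises the $x_3$-motion at height $\gtrsim\eps_n^{-2}$) with the linear-in-$x_3$ structure of the $O(\eps_n\rho_n)$ part of $\tilde A_n'$. The technical points to dispatch with care are: the cross terms between the $x_3$-independent mode and its complement (handled by Young's inequality, except in the borderline range $(\eps_n\rho_n)^2\eps_n^2\not\to 0$, where the mode expansion should be replaced by a direct localisation of the fibre ground state on the scale $(\eps_n\beta_n)^{-1/2}$); the absorption of $R_n$, whose summand $\grad'\!\int_0^{x_3}(B_n)_3$ is bounded only in $L^2(\Omega)$ and so requires a short interpolation using the $L^4(\Omega)$-bound on $v$ coming from the potential term; and the Leibniz boundary terms produced by the $x'$-dependence of $f$ and $g$.
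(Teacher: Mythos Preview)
Your treatment of the vector potential is essentially the paper's argument, so there is nothing to add there. For the order parameter, however, you and the paper take \emph{genuinely different routes}.

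The paper argues by \emph{rescaling and soft compactness}: setting $w_n:=u_n/(\eps_n\rho_n)$ and $B_n:=A_n/(\eps_n\rho_n)$, the covariant-derivative bound reads $\|\grad' w_n-iB_n'u_n\|_{L^2}\le K/(\eps_n\rho_n)\to 0$. Since $B_n'\to {A^{\rm ex}_{\|}}'$ in $L^4_{\rm loc}$ and $u_n$ is bounded in $L^4(\Omega)$, the product $B_n'u_n$ is bounded in $L^2(\Omega)$, hence so is $\grad w_n$; combined with $\|w_n\|_{L^2}=\|u_n\|_{L^2}/(\eps_n\rho_n)\to 0$ this gives $w_n\wto 0$ in $H^1$. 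An elementary lemma (if $f_n\wto 0$, $g_n\wto g$ and $f_n-g_n\to 0$ strongly, then $g=0$) applied to $f_n=\grad' w_n$, $g_n=iB_n'u_n$ yields $i{A^{\rm ex}_{\|}}'u=0$, hence $u=0$ since ${A^{\rm ex}_{\|}}'\not\equiv 0$. Strong $L^2$ convergence of $u_n$ then follows. No spectral estimate, no mode decomposition, no discussion of Neumann vs.\ Dirichlet boundaries.

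Your route---a Poincar\'e-type inequality $Q_n(u)\ge\mu_n\|u\|_{L^2}^2$ via the divergence of the lowest magnetic eigenvalue---is sound in principle and has the virtue of being \emph{quantitative}: it yields an explicit rate $\|u_n\|_{L^2}^2=O(\mu_n^{-1})$. But the three technical points you honestly flag (mode cross terms, absorption of the $L^2$-only part of $R_n$, and the $x'$-dependence of $f,g$) are exactly the places where work is required, and the borderline regime $(\eps_n\rho_n)^2\eps_n^2\not\to 0$ that you single out does need a separate argument. One simplification worth noting: if you gauge away only the \emph{external} third component $(A^{\rm ex}_{\eps_n})_3$ rather than the full $(A_n)_3$, the residual $R_n'$ becomes $\tfrac12 h^{\rm ex}_3(-x_2,x_1)+B_n'$ with $B_n=A_n-A^{\rm ex}_{\eps_n}$ bounded in $L^6$, hence $R_n'$ is bounded in $L^4(\Omega)$ and pairs directly against the $L^4$-bound on $v$---this removes the worst of your three difficulties.

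In short: both arguments work, but the paper's is markedly shorter and avoids every hard estimate; yours buys a decay rate at the price of the analysis you outline.
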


\begin{theorem}[$\Gamma$--limit]\label{thm:gammalim_super}
Let $(u,A) \in L^1(\Omega;\C) \times L^1(\R^3;\R^3)$.
Then
$$
\Gamma-\lim_{\eps \to 0^+} I_{\eps}(u,A)
	= \begin{cases}
	\ds \frac{\kappa^2}{4} |\Omega| 	& \text{ if } u\equiv 0 \text{ and } A = A^{\rm ex}_{\|}\\
	\infty		& \text{ otherwise.}
	\end{cases}
$$
\end{theorem}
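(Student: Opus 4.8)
The plan is to prove the lower ($\Gamma$--$\liminf$) and upper ($\Gamma$--$\limsup$) bounds separately, letting the compactness statement Theorem~\ref{thm:compactness_super} carry the real weight. Throughout I would work with an arbitrary sequence $\eps_n\to 0^+$ and admissible pairs $(u_n,A_n)$ with $A_n-A^{\rm ex}_{\eps_n}\in\HHr$; the convergence in which the $\Gamma$-limit is taken is $u_n\to u$ in $L^1(\Omega;\C)$ together with $\tfrac{1}{\eps_n\rho_n}(A_n-A^{\rm ex}_{\eps_n})\to 0$ in $\HHr$, which is the precise meaning of ``$A=A^{\rm ex}_{\|}$'' in the statement (recall $A^{\rm ex}_{\eps_n}=\eps_n\rho_n A^{\rm ex}_{\|}+A^{\rm ex}_{\perp}$ from \eqref{Aex}, so this amounts to $\tfrac{1}{\eps_n\rho_n}A_n\to A^{\rm ex}_{\|}$ in $L^1_{loc}$).

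\textbf{Lower bound.} Given $(u_n,A_n)\to(u,A)$, if $\liminf_n I_{\eps_n}(u_n,A_n)=+\infty$ there is nothing to show. Otherwise I would pass to a subsequence along which the energies converge to the $\liminf$ and are bounded, and invoke Theorem~\ref{thm:compactness_super}: along a further subsequence $u_n\to 0$ in $L^2(\Omega;\C)$ and $\tfrac{1}{\eps_n\rho_n}(A_n-A^{\rm ex}_{\eps_n})\wto 0$ in $\HHr$, so the limits satisfy $u\equiv0$ and $A=A^{\rm ex}_{\|}$. Hence if $(u,A)\neq(0,A^{\rm ex}_{\|})$ the energy cannot have been bounded, $\liminf_n I_{\eps_n}=+\infty$, and this is the ``otherwise'' case. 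When $u\equiv0$ and $A=A^{\rm ex}_{\|}$ I would keep only the (nonnegative) bulk potential and use $(1-t)^2\geq 1-2t$ for $t\geq0$:
$$ I_{\eps_n}(u_n,A_n)\ \geq\ \frac{\kappa^2}{4}\int_\Omega\bigl(1-|u_n|^2\bigr)^2\,dx\ \geq\ \frac{\kappa^2}{4}\Bigl(|\Omega|-2\,\|u_n\|_{L^2(\Omega)}^2\Bigr); $$
since $\|u_n\|_{L^2(\Omega)}\to0$, letting $n\to\infty$ gives $\liminf_n I_{\eps_n}(u_n,A_n)\geq\frac{\kappa^2}{4}|\Omega|$.

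\textbf{Upper bound.} If $(u,A)\neq(0,A^{\rm ex}_{\|})$ the target value is $+\infty$ and the inequality is vacuous. For $(u,A)=(0,A^{\rm ex}_{\|})$ I would take the explicit recovery sequence $u_n\equiv0$, $A_n:=A^{\rm ex}_{\eps_n}$ (see \eqref{Aex}): then $A_n-A^{\rm ex}_{\eps_n}\equiv0\in\HHr$ and $\tfrac{1}{\eps_n\rho_n}(A_n-A^{\rm ex}_{\eps_n})\equiv0$, so admissibility and all required convergences hold trivially. Because $\grad\times A_n=(\eps_n\rho_n{h^{\rm ex}}',h^{\rm ex}_3)$, both field terms in $I_{\eps_n}$ vanish identically, and the two covariant kinetic terms vanish since $u_n\equiv0$; hence $I_{\eps_n}(u_n,A_n)=\frac{\kappa^2}{4}\int_\Omega(1-0)^2\,dx=\frac{\kappa^2}{4}|\Omega|$ for every $n$, so $\lim_n I_{\eps_n}(u_n,A_n)=\frac{\kappa^2}{4}|\Omega|$.

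The hard part --- forcing $u_n\to0$ from bounded energy --- is exactly the content of Theorem~\ref{thm:compactness_super}, which I take as given; there the supercritical hypothesis $\eps_n\rho_n\to\infty$ enters because $(\grad'-iA_n')u_n$ carries a potential $A_n'$ of size $\sim\eps_n\rho_n$ away from the slice $\{x_3=0\}$, so finiteness of $\int_\Omega|(\grad'-iA_n')u_n|^2$ squeezes $|u_n|$ to zero almost everywhere. Granting this, the $\Gamma$-limit is essentially forced: the bulk potential alone already produces the value $\tfrac{\kappa^2}{4}|\Omega|$, and the test pair $(0,A^{\rm ex}_{\eps_n})$ attains it exactly. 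The only point that still demands care inside the present proof is the bookkeeping of the potential topology explained in the first paragraph, since neither $A^{\rm ex}_{\|}$ nor $A^{\rm ex}_{\perp}$ lies in $\HHr$.
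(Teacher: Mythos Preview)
Your proposal is correct and follows essentially the same route as the paper: the lower bound drops all but the bulk potential and uses $u_n\to 0$ in $L^2$ (the paper writes this as $\liminf\int(1-|u_n|^2)^2=|\Omega|$ directly, your inequality $(1-t)^2\ge 1-2t$ is an equivalent way to see it), and the upper bound is the identical recovery pair $u_n\equiv 0$, $A_n=A^{\rm ex}_{\eps_n}=\eps_n\rho_n A^{\rm ex}_{\|}+A^{\rm ex}_{\perp}$. Your treatment of the ``otherwise'' case via compactness is slightly more explicit than the paper's, which simply states the liminf proposition under the hypothesis that the limiting convergences already hold and leaves the rest to Theorem~\ref{thm:compactness_super}.
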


\subsection{Compactness}

\begin{lemma}\label{lem:conv}
Let $\{f_n\}, \{g_n\} \subset L^2(\Omega;\C^2)$ be such that
\begin{align*}
& f_n \wto 0 \text{ in } L^2(\Omega;\C^2), \\
& g_n \wto g \text{ in } L^2(\Omega;\C^2).
\end{align*}
Assume further that $f_n - g_n \to 0$ in $L^2(\Omega;\C^2)$.

Then
$$
g=0.
$$
\end{lemma}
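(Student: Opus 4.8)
The plan is to exploit the uniqueness of weak limits in the Hilbert space $L^2(\Omega;\C^2)$. The key observation is that strong $L^2$ convergence implies weak $L^2$ convergence, so the hypothesis $f_n - g_n \to 0$ in $L^2(\Omega;\C^2)$ gives in particular $f_n - g_n \wto 0$.

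On the other hand, the difference of two weakly convergent sequences converges weakly to the difference of the limits: since $f_n \wto 0$ and $g_n \wto g$, linearity of the pairing $\langle \cdot, \varphi\rangle$ against any test element $\varphi \in L^2(\Omega;\C^2)$ yields $f_n - g_n \wto 0 - g = -g$. Comparing the two identifications of the weak limit of $\{f_n - g_n\}$ and invoking uniqueness of the weak limit, we conclude $-g = 0$, i.e. $g = 0$.

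There is essentially no obstacle here; the only point worth stating explicitly is the elementary fact that norm convergence is stronger than weak convergence, after which the result is immediate. This lemma will be applied in the compactness argument for the supercritical case with $f_n$ and $g_n$ chosen so that $f_n - g_n$ carries a factor vanishing in $L^2$ (coming from the $O(\eps_n)$ or $O(\eps_n\rho_n)$ scalings), forcing the weak limit $g$ of the relevant gauge-invariant quantities to vanish.
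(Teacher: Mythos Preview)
Your proof is correct. The paper actually states this lemma without proof (it is used immediately afterward in the compactness argument for Theorem~\ref{thm:compactness_super}), presumably because the argument is elementary; your reasoning via uniqueness of weak limits is exactly the natural one.
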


\begin{proof}[ of Theorem \ref{thm:compactness_super}]

Let $K := \sup_{n \in \NN} I_{\eps_n}(u_n,A_n) < \infty$.
Then define
\begin{align*}
& w_n := \frac{1}{\eps_n \rho_{n}} u_n, \\
& B_n := \frac{1}{\eps_n \rho_{n}} A_n, \\
& \ell_n := \grad \times B_n = \frac{1}{\eps_n \rho_{n}} h_n, \\
& e_n := \lambda_n^{-1} {(\ell_n)}_3 = {(h_n)}_3.
\end{align*}

Then
$$
\int_{\R^3} \left(\rho_n^2 \left| \ell_n' - {h^{\rm ex}}'\right|^2 + \left| {(h_n)}_3 - h^{\rm ex}_3\right|^2\right) \, dx \leq K,
$$
This implies that $\grad \times (B_n - A^{\rm ex})$ is bounded in $L^2$, so since $\div B_n = 0$ we know that 
$$
\sup_{n\in\NN} \|\grad (B_n - A^{\rm ex})\|_{L^2(\R^3;\R^{3\times 3})} < \infty,
$$
and ${(\ell_n)}_3 \to 0$ in $L^2(\R^3)$.

By Lemma~\ref{lem:GP}, we deduce that $\{B_n - A^{\rm ex}\}$ is bounded in $\HH$, thus there exists a subsequence (not relabeled) such that
$$
B_n  - A^{\rm ex}\wto B - A^{\rm ex} \quad \text{ in } \HHr.
$$
and
$$
\div B = 0 \; ,  \qquad 
\grad \times B = (h^{\rm ex}_1,h^{\rm ex}_2,0).
$$

Moreover consider $B-A^{\rm ex}_{\|}$, which satisfies $\grad \times (B-A^{\rm ex}_{\|}) = 0$. By the uniqueness in Lemma \ref{lem:GP}, we deduce that
$$
B\equiv A^{\rm ex}_{\|}.
$$

On the other hand, we know that $\{u_n\}_{n \in \NN}$ is bounded in $L^4 (\Omega;\C)$,
that $\grad B_n$ is bounded in $L^2(\Omega;\R^{3\times 3})$, and
$$
\int_{\Omega} \left( \left| \grad' u_n - i A'_n u_n\right|^2  + \left| \frac{1}{\eps_n} \partial_3 u_n - i {(A_n)}_3 u_n\right|^2 \right) \, dx \leq K,
$$
so
$$
(\eps_n \rho_n)^2 \int_{\Omega} \left( \left| \grad' \left(\frac{u_n}{\eps_n\rho_n}\right) - i B'_n u_n\right|^2  + \left| \frac{1}{\eps_n} \partial_3\left(\frac{u_n}{\eps_n\rho_n}\right) - i {(B_n)}_3 u_n\right|^2 \right) \, dx \leq K.
$$
This yields that $\{w_n\}$ is bounded 
in $H^1(\Omega;\C)$, so we may extract a further subsequence (not relabeled) such that
$$
w_n \wto w \text{ in } H^1(\Omega;\C).
$$

On the other hand, we know that $\{u_n\}$ is bounded in $L^2(\Omega;\C)$, so we can extract another subsequence (not relabeled) such that
$$
u_n \wto u \text{ in } L^2(\Omega;\C),
$$
which implies that $w=0$. So
$$
w_n \wto 0 \text{ in } H^1(\Omega;\C).
$$

We now know that
\begin{align*}
& \grad' w_n \wto 0 \text{ in } L^2(\Omega;\C^2),  \\
& iB'_n u_n \wto i {A^{\rm ex}_{\|}}' u \text{ in } L^2(\Omega;\C^2),  \\
& \grad' w_n - iB'_n u_n \to 0 \text{ in } L^2(\Omega;\C^2),
\end{align*}
so by Lemma \ref{lem:conv}, we deduce that
$$
iB'_n u_n \to 0 \text{ in } L^2(\Omega;\C^2).
$$

Since $i B'_n u_n \to i{A^{\rm ex}_{\|}}' u$ pointwise, and ${A^{\rm ex}_{\|}}' \neq 0$, we conclude that $u=0$, so we know that
$$
u_n \to 0 \text{ in } L^2(\Omega;\C).
$$
\end{proof}

\subsection{The $\Gamma$-liminf inequality}

\begin{proposition}[$\Gamma-\liminf$ inequality]

Consider sequences $\{\eps_n\} \subset \R$, $\{u_n\} \subset H^1(\Omega;\C)$, and $\{A_n - A^{\rm ex}\} \subset \HHr$ satisfying
\begin{align*}
& \eps_n \to 0^+, \\
& u_n \to 0 \text{ in } L^2(\Omega;\C), \\
& \frac{1}{\eps_n \rho_n} A_n - A^{\rm ex}_{\|} \wto 0 \text{ in } \HHr. 
\end{align*}

Then
$$
\liminf_{n \to \infty} I_{\eps_n}(u_n,A_n) 
	\geq \frac{\kappa^2}{4}|\Omega|.
$$
\end{proposition}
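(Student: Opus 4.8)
The plan is to discard every term of $I_{\eps_n}(u_n,A_n)$ except the Ginzburg--Landau potential, each of the omitted pieces (the two covariant-derivative terms and the two magnetic-field terms) being manifestly nonnegative. This yields the pointwise-in-$n$ bound
$$
I_{\eps_n}(u_n,A_n)\;\geq\;\frac{\kappa^2}{4}\int_{\Omega}\bigl(1-|u_n|^2\bigr)^2\,dx,
$$
and reduces the statement to showing that $\liminf_{n\to\infty}\int_{\Omega}(1-|u_n|^2)^2\,dx\geq|\Omega|$. Observe that neither the value of $\eps_n$ nor the hypothesis on $A_n$ enters this bound: the only ingredient borrowed from the supercritical scaling is the conclusion of the compactness statement, Theorem~\ref{thm:compactness_super}, namely that $u_n\to 0$ strongly in $L^2(\Omega;\C)$.

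For the remaining scalar inequality I would use the elementary pointwise estimate $(1-t)^2\geq 1-2t$ (valid for every $t\in\R$, since the difference equals $t^2$), applied with $t=|u_n(x)|^2$. Integrating over $\Omega$, and recalling that $u_n\in H^1(\Omega;\C)\hookrightarrow L^4(\Omega;\C)$ so that the left-hand side is finite, gives
$$
\int_{\Omega}\bigl(1-|u_n|^2\bigr)^2\,dx\;\geq\;|\Omega|-2\,\|u_n\|_{L^2(\Omega;\C)}^2 .
$$
Since $u_n\to 0$ in $L^2(\Omega;\C)$ by hypothesis, the subtracted term vanishes in the limit; passing to the lower limit gives $\liminf_{n}\int_{\Omega}(1-|u_n|^2)^2\,dx\geq|\Omega|$, which together with the first display is the asserted inequality.

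There is no genuine obstacle in this direction. Once compactness has forced the order parameter to vanish, the lower bound is immediate: a configuration with $u\equiv 0$ must pay the full condensation energy $\tfrac{\kappa^2}{4}|\Omega|$, and none of the other nonnegative contributions can offset it. The real content of the supercritical analysis lies in the compactness argument already carried out, and in the matching $\Gamma$-$\limsup$ inequality, for which one expects the trivial configuration $u_n\equiv 0$ together with the exact applied-field potential (which makes the magnetic terms vanish identically) to serve as a recovery sequence.
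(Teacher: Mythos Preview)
Your proof is correct and follows essentially the same approach as the paper: drop all nonnegative terms except the potential, then use $u_n\to 0$ in $L^2$ to conclude $\liminf_n\int_\Omega(1-|u_n|^2)^2\,dx\ge|\Omega|$. The paper states this last step in one line without justification; your explicit use of $(1-t)^2\ge 1-2t$ simply makes transparent the same computation.
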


\begin{proof}[]

Since $u_n \to 0$ in $L^2(\Omega;\C)$, we have
$$
\liminf_{n \to \infty} I_{\eps_n}(u_n,A_n) 
	\geq \frac{\kappa^2}{4} \liminf_{n \to \infty} \int_{\Omega} \bigl( 1-|u_n|^2\bigr)^2 \, dx
	= \frac{\kappa^2}{4} |\Omega|. 
$$
This completes the proof.
\end{proof}

\subsection{The $\Gamma$-limsup inequality}

\begin{proposition}[$\Gamma-\limsup$ inequality]

Let $\{\eps_n\} \in \R$ be a sequence such that $\eps_n \to 0^+$.
Then, there exist sequences $\{u_n\} \subset H^1(\Omega;\C)$ and $\{A_n - A^{\rm ex}\} \subset \HHr$ such that
\begin{align*}
& u_n \to 0 \text{ in } L^2(\Omega;\C), \\
& \frac{1}{\eps_n \rho_n} A_n - A^{\rm ex}_{\|} \wto 0 \text{ in } H^1(\R^3;\R^3),
\end{align*}
and
$$
\lim_{n \to \infty} I_{\eps_n}(u_n,A_n) 
	= \frac{\kappa^2}{4}|\Omega|.
$$
\end{proposition}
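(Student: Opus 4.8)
The plan is to exhibit the obvious recovery sequence, since in the supercritical regime the $\Gamma$-limit carries no free data: it is attained only at $u\equiv 0$ together with the ``parallel'' limiting potential, and takes the single value $\frac{\kappa^2}{4}|\Omega|$. I would take $u_n\equiv 0$ and let $A_n$ be the explicit reference potential generating the $\eps_n$-rescaled applied field, i.e. $A_n := A^{\rm ex}_{\eps_n}$ as in \eqref{Aex}; equivalently, $A_n = A^{\rm ex}_\perp + \eps_n\rho_n A^{\rm ex}_{\|}$. Since $A_n$ is the reference potential itself, $A_n - A^{\rm ex}_{\eps_n}=0\in\HHr$, so the pair $(u_n,A_n)$ is admissible (here and in Theorem~\ref{thm:compactness_super} the symbol $A^{\rm ex}$ must be read as $A^{\rm ex}_{\eps_n}$, as this is the only reference for which the field term of $I_{\eps_n}$ is finite), and $\nabla\times A_n = (\eps_n\rho_n {h^{\rm ex}}',h^{\rm ex}_3)$ holds identically on $\R^3$.

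First I would check the prescribed convergences. Trivially $u_n\equiv 0$, so $u_n\to 0$ in $L^2(\Omega;\C)$, and $A_n-A^{\rm ex}_{\eps_n}\equiv 0\wto 0$ in $\HHr$. For the rescaled potential a one-line computation gives $\frac{1}{\eps_n\rho_n}A_n - A^{\rm ex}_{\|}=\frac{1}{\eps_n\rho_n}A^{\rm ex}_\perp$; since $A^{\rm ex}_\perp$ is fixed ($\eps$-independent) and $\eps_n\rho_n\to\infty$ in the supercritical regime, the right-hand side tends to $0$ on every compact set, and in $H^1_{\rm loc}(\R^3;\R^3)$. This is the sense in which the stated convergence of $\frac{1}{\eps_n\rho_n}A_n - A^{\rm ex}_{\|}$ to $0$ is to be understood; no global $H^1(\R^3)$ statement is available since $A^{\rm ex}_\perp\notin H^1(\R^3;\R^3)$.

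Then I would evaluate the energy. Because $u_n\equiv 0$, the covariant derivative terms $|(\nabla'-iA_n')u_n|^2$ and $|\eps_n^{-1}(\partial_3-i(A_n)_3)u_n|^2$ vanish identically, and the potential term contributes $\frac12\int_\Omega\frac{\kappa^2}{2}(1-0)^2\,dx=\frac{\kappa^2}{4}|\Omega|$. Because $\nabla\times A_n=(\eps_n\rho_n{h^{\rm ex}}',h^{\rm ex}_3)$ exactly, one has $(h_n)_3-h^{\rm ex}_3\equiv 0$ and $h_n'-\eps_n\rho_n{h^{\rm ex}}'\equiv 0$, so the magnetic field integral is $0$. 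Hence $I_{\eps_n}(u_n,A_n)=\frac{\kappa^2}{4}|\Omega|$ for every $n$, and in particular $\lim_{n\to\infty} I_{\eps_n}(u_n,A_n)=\frac{\kappa^2}{4}|\Omega|$, which matches the lower bound proved in the preceding $\Gamma$-$\liminf$ proposition; together with the compactness Theorem~\ref{thm:compactness_super} this yields Theorem~\ref{thm:gammalim_super}.

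The point worth flagging is that, unlike the subcritical regime (where Lemma~\ref{lem:cosserat} was needed to realize a prescribed Cosserat field as a weak limit of admissible divergence-free potentials), here there is essentially no obstacle: the magnetic energy is driven to exactly zero by the explicit reference potential, so no truncation or mollification is required, and the only care needed is the bookkeeping of the non-decaying component $A^{\rm ex}_\perp$, which forces the vector-potential convergence in the statement to be interpreted locally rather than in $H^1(\R^3;\R^3)$.
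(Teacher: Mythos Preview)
Your proof is correct and follows essentially the same construction as the paper: take $u_n\equiv 0$ and $A_n=A^{\rm ex}_\perp+\eps_n\rho_n A^{\rm ex}_{\|}$, so that every term in $I_{\eps_n}$ except the potential vanishes identically. Your observation that $\frac{1}{\eps_n\rho_n}A^{\rm ex}_\perp$ cannot lie in $H^1(\R^3;\R^3)$ (or in $\HHr$) for any finite $n$, and that the stated convergence must therefore be read locally, is a valid point that the paper's proof glosses over.
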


\begin{proof}[]

First, define
\begin{align*}
& u_n(x) := 0, \\
& A_n(x) := \eps_n\rho_n A^{\rm ex}_{\|} + A^{\rm ex}_{\perp}.
\end{align*}

Then
$$
B_n -A^{\rm ex}_{\|}:= \frac{1}{\eps_n \rho_n} A_n - A^{\rm ex}_{\|}= \frac{A^{\rm ex}_{\perp}}{\eps_n\rho_n} \wto 0 \text{ in } \HHr.
$$

Moreover,
$$
I_{\eps_n}(u_n,A_n)
	= \frac{\kappa^2}{4} |\Omega| .
$$
This completes the proof.
\end{proof}


\bibliographystyle{amsalpha}
\addcontentsline{toc}{section}{References}
\bibliography{refs}


\end{document}